\newif\ifcomments
\newcommand\comment[1]{%
  \ifcomments
  \textcolor{black}{#1}
  \else
  \fi}
\definecolor{mycolor}{RGB}{20, 20, 122}
\numberwithin{equation}{section}
\theoremstyle{plain}
\newtheorem{remark}{Remark}
\newtheorem{theorem}{Theorem}
\newtheorem{theorem*}{Theorem \nonumber}
\newtheorem{corollary}[theorem]{Corollary}
\newtheorem{proposition}[theorem]{Proposition}
\newtheorem{example}[theorem]{Example}
\theoremstyle{definition}
\newtheorem{definition}{Definition}
\def\th{\theta} 
\def\Two{\mathrm{II}}
 \def\m{\mathsf{m}}
\def\T{\mathbb{T}}
\def\R{{\mathbb R}}
\def\C{{\mathbb C}}
\def\injs{ Inj}
\def\exp{\operatorname{exp}}
\def\Id{\operatorname{Id}}
\def\Dens{\operatorname{Dens}}
\def\Div{\operatorname{div}}
\def\Diff{\operatorname{Diff}}
\let\on=\operatorname
\newcommand{\ud}{\,\mathrm{d}}
\newcommand{\eqdef}{\ensuremath{\stackrel{\mbox{\upshape\tiny def.}}{=}}}
\begin{document}
\title[The Camassa-Holm equation as an incompressible Euler equation]{The Camassa-Holm equation as an incompressible Euler equation: a geometric point of view}
\author{Thomas Gallou\"et} 
\address{CMLS, UMR 7640, \'Ecole Polytechnique, FR-91128 Palaiseau Cedex.}
\email{thomas.gallouet@polytechnique.edu}
\author{Fran\c{c}ois-Xavier Vialard}
\address{Universit\'e Paris-Dauphine, PSL Research University, Ceremade \\ INRIA, Project team Mokaplan}
\email{fxvialard@normalesup.org}
\maketitle

\begin{abstract}
The group of diffeomorphisms of a compact manifold endowed with the $L^2$ metric acting on the space of probability densities gives a unifying framework for the incompressible Euler equation and the theory of optimal mass transport. Recently, several authors have extended optimal transport to the space of positive Radon measures where the Wasserstein-Fisher-Rao distance is a natural extension of the classical $L^2$-Wasserstein distance. In this paper, we show a similar relation between this unbalanced optimal transport problem and the $H^{\Div}$ right-invariant metric on the group of diffeomorphisms, which corresponds to the Camassa-Holm (CH) equation in one dimension. 
Geometrically, we present an isometric embedding of the group of diffeomorphisms endowed with this right-invariant metric in the automorphisms group of the fiber bundle of half densities endowed with an $L^2$ type of cone metric. 
This leads to a new formulation of the (generalized) CH equation as a geodesic equation on an isotropy subgroup of this automorphisms group; On $S_1$, solutions to the standard CH thus give radially $1$-homogeneous solutions of the incompressible Euler equation on $\R^2$ which preserves a radial density that has a singularity at $0$. 
An other application consists in proving that smooth solutions of the Euler-Arnold equation for the $H^{\Div}$ right-invariant metric are length minimizing geodesics for sufficiently short times. 
\end{abstract}


\section{Introduction}
In his seminal article \cite{Arnold1966}, Arnold showed that the incompressible Euler equation can be viewed as a geodesic flow on the group of volume preserving diffeomorphisms of a Riemannian manifold $M$. His formulation had an important impact in the mathematical literature and it has led to many different works. Among others, let us emphasize two different points of view which have proven to be successful. 
\par The first one has been investigated by Ebin and Marsden in \cite{Ebin1970} where the authors have taken an intrinsic point of view on the group of diffeomorphisms as an infinite dimensional weak Riemannian manifold. Formulating the geodesic equation as an ordinary differential equation in a Hilbert manifold of Sobolev diffeomorphisms, they proved, among others, local well-posedness of the geodesic equation for smooth enough initial conditions. Since then, many fluid dynamic equations, including the Camassa-Holm equation, have been written as a geodesic flow on a group of diffeomorphisms endowed with a right-invariant metric or connection \cite{Kouranbaeva1999,KhesinCurvature,Misiolek2002,Escher2011,Holm1998} and analytical properties have been derived in the spirit of \cite{Ebin1970}. Note in particular that all these works assume a strong ambient topology such as $H^s$ for $s$ high enough and the topology given by the Riemannian metric is generically weaker, typically $L^2$ in the case of incompressible Euler.
\par Another point of view, motivated by the variational interpretation of geodesics as minimizers of the action functional, was initiated by Brenier. He developed an extrinsic approach by considering the group of volume preserving diffeomorphisms as a Riemannian submanifold embedded in the space of maps $L^2(M,M)$ which is particularly simple when $M$ is the Euclidean space or torus. In particular, his polar factorization theorem \cite{Brenier1991} was motivated by a numerical scheme approximating geodesics on the group of volume preserving diffeomorphisms. Optimal transport then appeared as a key tool to project a map onto this group by minimizing the $L^2$ distance and it can be interpreted as a non-linear extension of the pressure in the incompressible Euler equation. Since then, optimal transport has witnessed an impressive development and found many important applications inside and outside mathematics, see for instance the gigantic monograph of Villani \cite{villani2008optimal}. Brenier also used optimal transport in order to define the notion of generalized geodesics for the incompressible Euler equation in \cite{Brenier1999}.
\par
In this article, we develop Brenier's point of view for a generalization in any dimension of the Camassa-Holm equation. Indeed, we present an isometric embedding of the group of diffeomorphisms endowed with the right-invariant $H^{\Div}$ metric into a space of maps endowed with an $L^2$ metric. Moreover, the recently introduced Wasserstein-Fisher-Rao distance \cite{GeneralizedOT1,GeneralizedOT2}, a generalization of optimal transport to measures that do not have the same total mass, plays the role of the $L^2$ Wasserstein distance for the incompressible Euler equation. 
\subsection{Contributions}

The underlying key point for our work is the generalization of the (formal) Riemannian submersion already presented in \cite{GeneralizedOT2}, which unifies the unbalanced optimal problem and the $H^{\Div}$ right-invariant metric.
We rewrite the geodesic flow of the right-invariant $H^{\Div}$ metric on the diffeomorphism group as a geodesic equation on a constrained submanifold of a semidirect product of group or equivalently on the automorphism group of the half-densities fibre bundle endowed with the cone metric (see Section \ref{Sec:ConeMetric} for its definition). This point of view has three applications:  (1) We interpret solutions to the Camassa-Holm equation and one of its generalization in higher dimension as particular solutions of the incompressible Euler equation on the plane for a radial density which has a singularity at $0$. This correspondence can be introduced via a sort of Madelung transform. 
(2) We generalize a result of Khesin et al. in \cite{KhesinCurvature} by computing the curvature of the group as a Riemannian submanifold. 
(3) Generalizing a result of Brenier to the case of Riemannian manifolds, which states that solutions of the incompressible Euler equation are length minimizing geodesic for sufficiently short times, we prove similar results for the Camassa-Holm equation.
\par
Since the interpretation of the Camassa-Holm equation as an incompressible Euler equation is one of the main results of the paper, we present it below.
\begin{theorem*}[Camassa-Holm as incompressible Euler]
Solutions to the Camassa-Holm equation on $S_1$
\begin{equation}
\partial_{t} u - \frac14 \partial_{txx} u +3 \partial_{x} u \,u  - \frac12\partial_{xx} u \,\partial_x u - \frac14\partial_{xxx} u \,u = 0\,
\end{equation}
are mapped to solutions of the incompressible Euler equation on $\R^2 \setminus \{ 0\}$ for the density $ \rho = \frac{1}{r^4} \on{Leb}$, that is
\begin{equation}\label{Eq:EulerSimple}
\begin{cases}
\dot{v} + \nabla_v v = -\nabla P\,,\\
\nabla \cdot (\rho v) = 0\,,
\end{cases}
\end{equation}
by the map $u \mapsto \left(u(\theta),\frac{r}{2} \partial_x u(\theta)\right)$.
\end{theorem*}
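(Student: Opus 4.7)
The plan is a direct verification in polar coordinates $(r,\theta)$ on $\R^2 \setminus \{0\}$. I interpret the map as specifying the angular and radial components of a velocity field, $v = u(\theta)\,\partial_\theta + \tfrac{r}{2}\partial_x u(\theta)\,\partial_r$; in the orthonormal polar frame $(\hat r,\hat\theta)=(\partial_r, r^{-1}\partial_\theta)$ its physical components are $v_{\hat r}=\tfrac{r}{2}\partial_x u$ and $v_{\hat\theta}=ru$, so $v$ is radially $1$-homogeneous, as announced. The weighted incompressibility $\nabla\cdot(\rho v)=0$ reduces to a one-line check: the polar divergence formula gives $\nabla\cdot v = 2\,\partial_x u$, while $\nabla\rho\cdot v = -4r^{-5}\cdot\tfrac{r}{2}\partial_x u = -2 r^{-4}\partial_x u$, and the two contributions cancel after multiplying the second by $\rho^{-1}$.

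For the Euler equation $\dot v + \nabla_v v = -\nabla P$, I compute the convective term using the standard polar formulas and obtain
\begin{align*}
(\dot v + \nabla_v v)_{\hat r} &= \tfrac{r}{2}\partial_{tx}u + \tfrac{r}{4}(\partial_x u)^2 + \tfrac{r}{2}u\,\partial_{xx}u - r u^2,\\
(\dot v + \nabla_v v)_{\hat \theta} &= r\,\partial_t u + 2 r u\,\partial_x u.
\end{align*}
Both components being linear in $r$ makes the pressure ansatz $P(r,\theta)=r^2 Q(\theta)$ natural. The radial equation $-\partial_r P = -2rQ$ determines $Q = -\tfrac14 \partial_{tx}u - \tfrac18(\partial_x u)^2 - \tfrac14 u\,\partial_{xx}u + \tfrac12 u^2$, while the angular equation $-r^{-1}\partial_\theta P = -rQ'$ forces $Q'(\theta) = -\partial_t u - 2u\,\partial_x u$. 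Differentiating the first formula for $Q$ in $\theta$ and equating it with the second yields exactly the Camassa-Holm equation with the coefficients $(\tfrac14, 3, \tfrac12, \tfrac14)$ stated in the theorem; conversely, if $u$ solves CH, then $Q(\theta)$ defined by the radial equation automatically satisfies the angular one, so $P = r^2 Q$ is a bona fide pressure.

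The main obstacle is purely bookkeeping: aligning the conventions for the coordinate versus orthonormal polar frames and tracking numerical coefficients so that the compatibility equation matches CH in the indicated normalization rather than a rescaled variant. A cleaner conceptual route, presumably followed in the paper, goes through the isometric embedding of $(\Diff(S_1), H^{\Div})$ into the automorphism group of the half-density bundle equipped with the cone metric introduced in Section \ref{Sec:ConeMetric}: under the Madelung-type correspondence announced in the introduction, a CH geodesic on $S_1$ lifts to a geodesic on an isotropy subgroup, which upon identification of the cone over $S_1$ with $\R^2\setminus\{0\}$ endowed with the density $r^{-4}\on{Leb}$ is precisely an incompressible Euler flow. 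This structural viewpoint bypasses the coordinate computation and explains why the ansatz $P=r^2 Q(\theta)$ succeeds: the pressure is forced to be $2$-homogeneous in $r$ by the $1$-homogeneity of $v$ and the scaling of $\rho$.
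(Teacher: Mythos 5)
Your computation is correct, and it checks out line by line: with $v_{\hat r}=\tfrac r2\partial_x u$, $v_{\hat\theta}=ru$ one indeed gets $\nabla\cdot v=2\partial_x u$, $v\cdot\nabla\rho=-2r^{-4}\partial_x u$, the two convective components you list, and the compatibility condition $Q'=-\partial_t u-2u\,\partial_x u$ for $Q=-\tfrac14\partial_{tx}u-\tfrac18(\partial_x u)^2-\tfrac14 u\,\partial_{xx}u+\tfrac12 u^2$ reproduces exactly the stated CH coefficients. However, your route is genuinely different from the paper's. The paper works entirely in Lagrangian coordinates: it starts from the constrained geodesic system for $(\varphi,\lambda)$ with $\lambda=\sqrt{\partial_x\varphi}$, uses the $0$- and $1$-homogeneity in $\lambda$ of the two equations to extend the pair to a diffeomorphism $\Phi(t)(x,r)=(\varphi(t,x),\lambda(t,x)r)$ of the cone satisfying $\ddot\Phi=-\nabla\Psi_p\circ\Phi$ with $\Psi_p(x,r)=\tfrac12 r^2p(x)$, verifies by a Jacobian computation that $\Phi$ pushes forward the measure $r^{-3}\ud r\ud\theta$ to itself, and only at the very end differentiates $\Phi$ at the identity to read off the Eulerian field $(u,\tfrac r2\partial_x u)$. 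What the paper's approach buys is that the $2$-homogeneous pressure and the exponent $-4$ in the density are \emph{derived} (the latter from the condition $\lambda^{\theta+3}=1$), and the argument generalizes verbatim to the $H^{\Div}$ case on a general $\mathcal C(M)$; what your approach buys is a self-contained, frame-by-frame verification that needs no geodesic machinery, at the cost of having to guess the ansatz $P=r^2Q(\theta)$ — a guess you correctly explain a posteriori by the $1$-homogeneity of $v$. Your closing paragraph accurately identifies the structural argument the paper actually uses.
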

In other words, rewriting the Camassa-Holm equation in polar coordinates transforms it into an incompressible Euler equation. Obviously, the proof of the theorem can be reduced to a simple calculation. In this paper, we show the geometrical structures that underpin this formulation.

\subsection{Link to previous works}
Recently, several authors including the second author extended optimal transport to the case of unbalanced measures, i.e. measures that do not have the same total mass. Although several works extended optimal transport to this setting, surprisingly enough, the equivalent of the $L^2$-Wasserstein distance in this unbalanced setting has been introduced in 2015 simultaneously by \cite{GeneralizedOT1,GeneralizedOT2} motivated by imaging applications, \cite{LieroMielkeSavareLong,LieroMielkeSavareShort} motivated by gradient flows as well as \cite{new2015kondratyev} and by \cite{Rezankhanlou2015} for optimal transport of contact structures.
In this paper, we show that, in the case of the Wasserstein-Fisher-Rao metric, the equivalent to the incompressible Euler equation is a generalization of the Camassa-Holm equation, namely the Euler-Arnold equation for the right-invariant metric $H^{\Div}$ on the group of diffeomorphisms. In one dimension, geodesics for the right-invariant $H^{\Div}$ metric are the solutions to the Camassa-Holm equation introduced in \cite{CH}. Since its introduction, the Camassa-Holm equation has attracted a lot of attention  since it is a bi-Hamiltonian system as well as an integrable system, it exhibits peakon solutions and it is a model for waves in shallow water \cite{Constantin2008,Constantin2001,Lenells2005,ConstantinEscher1998,BressanConstantin2007,danchin2001,GRUNERT2011}. In particular, this equation is known for its well understood blow-up in finite time and is a model for wave breaking \cite{McKean2004}. 
\par 
Although the title of \cite{Bressan2005}, which refers to optimal transport and the Camassa-Holm equation, is seemingly close to our article, the authors introduce a metric based on optimal transport which gives Lipschitz estimates for the solutions of the Camassa-Holm equation and it is a priori completely different to our construction. Indeed, in our article, the optimal transport metric measures the discrepancy of not being in the stabilizer of the group action defined in Section \ref{Sec:Automorphisms} where the solutions of the Camassa-Holm equation lie. 

Maybe more related to our results, homogeneous solutions of Euler equations have been studied for example in \cite{2017Elgindi,LuoShvydkoy}, however the measure preserved in those works is not a singular measure, as in our work.

\subsection{Plan of the paper}
In Section \ref{Sec:GeomViewPoint}, we recall the link between optimal transport and the incompressible Euler equation, then we introduce the Wasserstein-Fisher-Rao metric which generalizes the $L^2$ Wasserstein metric on the space of \emph{probability} densities to the space of \emph{integrable} densities, thus relaxing the mass constraint. 
We present the generalization of Otto's Riemannian submersion to this unbalanced case. This generalization uses a semidirect product of group which can be interestingly interpreted as the automorphism group of the principal fibre bundle of half-densities, as explained in Section \ref{Sec:Automorphisms}. This semidirect product of group has a natural left action on the space of densities and it gives the Riemannian submersion between an $L^2$ type of metric on the group and the Wasserstein-Fisher-Rao metric on the space of densities.
\par
In Section \ref{Sec:EulerArnold}, we briefly review the result on the local well-posedness of the Camassa-Holm equation and its $H^{\Div}$ generalization and the associated metric properties.
\par
Section \ref{Sec:Submanifold} presents the corresponding submanifold point of view corresponding to the Camassa-Holm equation (its generalization). The submanifold is the isotropy subgroup of the left action of the semidirect product of group and the ambient metric is the $L^2$ type of metric. As a direct consequence, it gives a generalization of a result on the sectional curvature written in \cite[Theorem A.2]{KhesinCurvature}.
\par
The two main applications of our approach are detailed in Section \ref{Sec:Applications}. The one dimensional case is developed in section \ref{Sec:CHAsEuler} where we show that solutions of the Camassa-Holm equation (its generalization) can be seen as particular solutions of an incompressible Euler equation for a particular density on the cone which has a singularity at $0$. We improve a result of Ebin and Marsden in dimension $1$ by extending Brenier's approach to show that every smooth geodesics are length minimizing on a sufficiently short time interval under mild conditions. Then, these result are generalized in \ref{Sec:GeneralHdiv}.

\subsection{Notations}
Hereafter is a non exhaustive list of notations used throughout the paper.
\begin{itemize}
\item $(M,g)$ is a smooth orientable Riemannian manifold which is assumed compact and without boundary. Its volume form is denoted by $\on{vol}$, $TM$ and $T^*M$ denote respectively the tangent and the cotangent bundle.
\item The distance on $(M,g)$ is sometimes denoted by $d_M$ when a confusion might occur.
\item For $x\in M$, the squared norm of a vector $v \in T_xM$ will be denoted by $\| v \|^2$ or $g(x)(v,v)$.
\item For $x \in M$, we denote by $\exp^M_x: T_xM \to M$, the exponential map, the superscript being  a reminder of the underlying manifold.
\item $\mathcal{C}(M)$ is the Riemannian cone over $(M,g)$ and is introduced in Definition \ref{def:Cone}.
\item The operator $\on{div}$ is the divergence w.r.t. the volume form on $(M,g)$.
\item The Lie bracket between two vector fields $X,Y$ on $M$ is denoted by $[X,Y]$.
\item If $f \in C^1(M,\R)$, then $\nabla f$ is the gradient of $f$ w.r.t. the metric $g$. Sometimes, we use the notation $\nabla_x$ to make clear which variable we consider.
\item The group of invertible linear maps on $\R^d$ is denoted by $\on{GL}_d(\R)$.
\item For a quantity $f(t,x)$ that depends on time and space variable, we denote by $\dot{f}$ its time derivative.
\item On $\R$ and $\C$, $| \cdot |$ denotes respectively the absolute value and the module.
\item $M = S_n(r)$ the Euclidean sphere of radius $r$ in $\R^{n+1}$. 
\item The Lebesgue measure is denoted by $\on{Leb}$.
\item Sometimes, we use the notation $a\eqdef b$ to define $a$ as $b$.
\end{itemize}


\section{A Geometric Point of View on Unbalanced Optimal Transport}\label{Sec:GeomViewPoint}

Before presenting unbalanced optimal transport in more details, we give a brief overview of the link between optimal transport and the incompressible Euler equation.

\subsection{Optimal transport and the incompressible Euler equation}
 We first start from the usual static formulation of optimal transport and then present the dynamical formulation proposed by Benamou and Brenier. The link between the two formulations can be introduced via Otto's Riemannian submersion, which also provides a clear connection between incompressible Euler equation and the dynamical formulation of optimal transport. Our presentation closely follows the discussion in \cite[Appendix A.5]{khesin2008geometry} and interesting complements can be found in 
\cite{Modin2012,KhesinCurvature,Khesin2013}. In the rest of the section, unless otherwise mentioned, $M$ denotes a smooth Riemannian manifold without boundary, for instance the flat torus.
\par
\textbf{Static formulation of optimal mass transport: }The optimal mass transport problem as introduced by Monge in 1781 consists in finding, between two given probability measures $\nu_1$ and $\nu_2$, a map $\varphi$ such that $\varphi_* \nu_1 = \nu_2$, i.e. the image measure of $\nu_1$ by $\varphi$ is equal to $\nu_2$ and which minimizes a cost given by
\begin{equation} \label{Eq:MongeFormulation0}
\int_M c(x,\varphi(x)) \ud \nu_1(x) \,, 
\end{equation}
where $c$ is a positive function that represents the cost of moving a particule of unit mass from location $x$ to location $y$. This problem is ill-posed in the sense that solutions may not exist and the Kantorovich formulation of the problem is the correct relaxation of the Monge formulation, which can be presented as follows: On the space of probability measures on the product space $M\times M$, denoted by $\mathcal{P}(M \times M)$, find a minimizer to 
\begin{equation}
\mathcal{I}(m) = \int_{M^2} c(x,y) \ud m(x,y) \, \text{ such that } p^1_*(m) = \nu_1 \text{ and } p^2_*(m) = \nu_2 \,,
\end{equation}
where $p^1_*(m),p^2_*(m)$ denote respectively the image measure of $m\in \mathcal{P}(M\times M)$ under the projections on the first and second factors on $M\times M$. Most often in the litterature, the cost $c$ is  chosen as a power of a distance. From now on, we will only discuss the case $c(x,y) = d(x,y)^2$ where $d$ is the distance associated with a Riemannian metric on $M$. In this case, the Kantorovich minimization problem defines the so-called $L^2$-Wasserstein distance on the space of probability measures. The Monge formulation can be expressed as a minimization problem as follows
\begin{equation}
W_2(\mu, \nu)^2 \eqdef \inf_{\varphi \in \Diff(M)} \left\{ \int_M d(\varphi(x), x)^2 \, \ud \nu_1(x) \, : \, \varphi_*\nu_1 = \nu_2 \right\}\,,
\end{equation}
where $\Diff(M)$ denotes the group of smooth diffeomorphisms of $M$. 
\par 
\textbf{Dynamic formulation: }
In \cite{benamou2000computational}, Benamou and Brenier introduced a dynamical version of optimal transport which was inspired and motivated by the study of the incompressible Euler equation. Let $\rho_0,\rho_1 \in C^\infty(M,\R_+)$ be integrable densities, note that all the quantities will be implicitly time dependent. 
The dynamic formulation of the Wasserstein distance consists in minimizing
\begin{equation}\label{BB}
\mathcal{E}(v) = \int_0^1 \int_M \|v(t,x)\|^2\rho(t,x) \,\ud \!\on{vol}(x) \ud t \ \, ,
\end{equation}
subject to the constraints $\dot{\rho} + \Div(v \rho) = 0$ and initial condition $\rho(0) = \rho_0$ and final condition $\rho(1) = \rho_1$. The notation $\| \cdot \|$ stands for the Euclidean norm. \par
Equivalently, following \cite{benamou2000computational}, a convex reformulation using the momentum $\m = \rho v$ reads
\begin{equation} \label{ConvexReformulation}
\mathcal{E}(\m) =   \int_0^1 \int_M \frac {\|\m(t,x) \|^2}{\rho(t,x)} \,\ud \!\on{vol}(x)  \ud t\, ,
\end{equation}
subject to the constraints $\dot{\rho} + \Div(\m) = 0$ and initial condition $\rho(0) = \rho_0$ and final condition $\rho(1) = \rho_1$. Let us underline that the functional  $\mathcal{E}$ is convex in $\rho,\m$ and the continuity equation is linear in $(\rho,\m)$, therefore convex optimization methods can be applied for numerical purposes. Due to the continuity equation, the problem is feasible if and only if the initial and final densities have the same total mass using Moser's lemma \cite{MoserLemma}.

\textbf{Otto's Riemannian submersion: }
The link between the static and dynamic formulations is made clear using Otto's Riemannian submersion \cite{OttoPorousMedium} which emphasizes the idea of a group action on the space of probability densities.
 Let $\Dens_p(M)$ be the set of probability measures that have smooth positive densities with respect to the volume measure $\on{vol}$. We consider such a probability density denoted by $\rho_0$. Otto showed that the map 
\begin{align*}
& \pi: \Diff(M) \to \Dens_p(M) \\
&\pi(\varphi) = \varphi_* (\rho_0)
\end{align*} 
is a formal Riemannian submersion of the metric $L^2(\rho_0)$ on  $ \Diff(M)$ to the $L^2$-Wasserstein metric on $\Dens_p(M)$. For all the basic properties of Riemannian submersions, we refer the reader to  \cite{GallotHulinLafontaine}. The fiber of this Riemannian submersion at point $\rho_0 \equiv 1$ is the subgroup of diffeomorphisms preserving the volume measure $\on{vol}$, we denote it by $\on{SDiff}(M)$ and we denote its tangent space at $\on{Id}$ by $\on{SVect}(M)$, the space of divergence free vector fields. The vertical space at a diffeomorphism $\varphi \in \Diff(M)$ for $\rho \eqdef \varphi_*\rho_0$ is  
\begin{equation}
\on{Vert}_\varphi = \left\{  v \circ \varphi \, ; \, v \in  \on{Vect}(M) \text{ s.t. } \Div (\rho v) = 0 \right\}\,.
\end{equation}
In particular, consider $\varphi \in \on{SDiff}(M)$, the vertical space is $\on{Vert}_\varphi = \left\{  v \circ \varphi \, ; \, v \in  \on{SVect}(M) \right\}$
and the horizontal space is
\begin{equation}
\on{Hor}_\varphi = \left\{  \nabla p \circ \varphi \, ; \, p \in C^\infty(M,\R) \right\}\,.
\end{equation}
\par 
\textbf{Incompressible Euler equation: } On the fiber $\on{SDiff}(M)$, the $L^2(\on{vol})$ metric is right-invariant. In Arnold's seminal work \cite{Arnold1966}, it is shown that the Euler-Lagrange equation associated with this metric is the incompressible Euler equation. Arnold derived this equation as a particular case of geodesic equations on a Lie group endowed with a right-invariant metric. In its Eulerian formulation, the incompressible Euler equation is, when $M = \T^d$ the flat torus for the Lebesgue measure,
\begin{equation} 
\begin{cases}
\partial_t v(t,x)  + v(t,x) \cdot \nabla v(t,x)=-\nabla p(t,x), \quad t>0,\;x\in  M  \, ,  \medskip\\
\on{div}(v) = 0\,,\\
v(0,x) = v_{0}(x)\,,
\end{cases} \label{Eq:EI}
\end{equation}
where $v_0 \in \on{SVect}(M)$ is the initial condition and $p$ is the pressure function. On a general Riemannian manifold $(M,g)$ compact and without boundary, the formulation is similar, omitting the time and space variables, for the volume measure,
\begin{equation}
\begin{cases}
\partial_t v  +  \nabla_{v} v=-\nabla p, \quad t>0,\;x\in  M  \, ,  \medskip\\
\on{div}(v) = 0\,,\\
v(0,x) = v_{0}(x)\,,
\end{cases} \label{eq:EIR}
\end{equation}
where, in this case, the symbol $\nabla$ denotes the Levi-Civita connection associated with the Riemannian metric on $M$ and $\on{div}$ denotes the divergence w.r.t. the volume measure.
Another fruitful point of view consists in considering the group $\on{SDiff}(M)$ as isometrically embedded in the group $\Diff(M)$ endowed with the $L^2(\on{vol})$ (non right-invariant) metric. Therefore the geodesic equations are simply geodesic equations on the Riemannian submanifold $\on{SDiff}(M)$ and the geodesic equations can be written as 
\begin{equation}\label{Eq:EulerLagrangian}
\ddot{\phi} = -\nabla p \circ \phi\,,
\end{equation}
where $\phi \in \on{SDiff}(M)$ and $p$ is still a pressure function. Using this Riemannian submanifold approach, Brenier was able to prove that solutions for which the Hessian of $p$ is bounded in $L^\infty$ are length minimizing for short times and several of his analytical results were derived from this formulation \cite{Brenier1993,Brenier1999}.
\par 
\textbf{Inviscid Burgers equation: } 
The geodesic equation on the group of diffeomorphisms for the $L^2$ metric written in Eulerian coordinates is the compressible Burgers equation. Its formulation on $M = \T^d$ is
\begin{equation}
\partial_t u(t,x) + u(t,x) \cdot \nabla u(t,x) = 0\,, 
\end{equation}
or on a general Riemannian manifold
\begin{equation}
\partial_t u + \nabla_{u}u = 0\,.
\end{equation}
This formulation is obviously related to the incompressible Euler equation where the pressure $p$ can be interpreted as a Lagrange multiplier associated with the incompressibility constraint, which is not present in Burgers equation. Since the map $\pi$ is a Riemannian submersion, geodesics on the space of densities can be lifted horizontally to geodesics on the group. These horizontal geodesics are potential solutions of the Burgers equation, if $u_0 = \nabla q_0$, i.e. $u$ is a potential at the initial time, then $u_t$ stays potential for all time (until it is not well defined any longer). The corresponding equation for the potential $q$ is the Hamilton-Jacobi equation
\begin{equation}\label{EqSimpleHJEquation}
\partial_t q(t,x) + \frac 12 \|\nabla q(t,x)\|^2 = 0\,,
\end{equation}
which, in this formulation, makes sense on a Riemannian manifold.

\subsection{The Wasserstein-Fisher-Rao metric,}
\par
\textbf{its dynamical formulation.}
The continuity equation enforces the mass conservation property in the Benamou-Brenier formulation \eqref{BB} (or \eqref{ConvexReformulation} recalling that by definition $\m = \rho v$). This constraint can be relaxed by introducing a source term $\mu$ in the continuity equation,
\begin{equation}\label{GCC}
\dot{\rho} =- \Div(\rho v)  + \mu\, =- \Div(\m)  + \mu\, .
\end{equation}
For a given variation of the density $\dot{\rho}$, there exist a priori many couples $(v,\mu)$ that reproduce this variation.
Following \cite{Metamorphosis2005}, it can be determined via the minimization of the norm of $(v,\mu)$, for a given choice of norm. The penalization of $\mu$ was chosen in \cite{OTmaasrumpf} as the $L^2$ norm but a natural choice is rather the Fisher-Rao metric $$ \on{FR}^2(\mu) = \int_M \frac {\mu(t,x)^2}{\rho(t,x)} \,\ud \!\on{vol}(x)  \,,$$
because it is homogeneous.
In other words, this is the $L^2$ norm of the growth rate w.r.t. the density $\rho$ since it can be written as $\int_M \alpha(t,x)^2\rho(t,x) \,\ud \!\on{vol}(x)$ where $\alpha$ is the growth rate $\alpha(t,x) \eqdef \frac{\mu(t,x)}{\rho(t,x)}$. Note in particular that this action is  $1$-homogeneous with respect to the couple $(\mu, \rho)$. This point is important for convex analysis properties and especially, in order to define the action functional on singular measures via the same formula. Obviously, there are many other choices of norms that satisfies this homogeneity property but this particular one can be related to the Camassa-Holm equation.
\par 
Thus, the Wasserstein-Fisher-Rao metric tensor denoted by $\on{WF}_{\rho}$ is simply given by the infimal convolution, a standard tool in convex analysis, between the Wasserstein and the Fisher-Rao metric tensors. Indeed, the metric tensor at a density $\rho$ is defined via the minimization
\begin{equation}
\on{WF}_{\rho}(\dot{\rho},\dot{\rho}) = \inf_{v,\alpha} \int_M \alpha(x)^2 + \|v(x)\|^2\,\ud \rho(x)\, \text{ s.t. } \dot{\rho} = -\Div(\rho v) + 2 \alpha \rho\,.
\end{equation}

 The distance associated with this metric tensor has been named Wasserstein-Fisher-Rao \cite{GeneralizedOT1}, Hellinger-Kantorovich \cite{LieroMielkeSavareLong}, Kantorovich-Fisher-Rao \cite{JKOKFR}. 
\begin{definition}[$\on{WF}$ metric] Let $(M,g)$ be a smooth Riemannian manifold compact and without boundary, $a,b \in \R_+^*$ be two positive real numbers and $\rho_0,\rho_1 \in \mathcal{M}_+(M)$ be two nonnegative Radon measures. The Wasserstein-Fisher-Rao metric is defined by
\begin{equation} \label{Extension1}
\on{WF}^2(\rho_0,\rho_1) = \inf_{\rho,\m,\mu} \mathcal{J}(\rho,\m,\mu)  \, ,
\end{equation}
where 
\begin{equation}
\mathcal{J}(\rho,\m,\mu) =  a^2 \int_0^1 \int_M \frac {g^{-1}(x)(\tilde{\m}(t,x),  \tilde{\m}(t,x))}{\tilde{\rho}(t,x)} \,\ud \nu(t,x) \, +  b^2 \int_0^1 \int_{M} \frac {\tilde{\mu}(t,x)^2}{\tilde{\rho}(t,x)} \,\ud \nu(t,x)
\end{equation}
over the set $(\rho,\m,\mu)$ satisfying $\rho \in \mathcal{M}([0,1] \times M)$, $\m \in (\Gamma_M^0([0,1] \times M,TM))^*$ which denotes the dual of time dependent continuous vector fields on $M$ (time dependent sections of the tangent bundle), $\mu \in \mathcal{M}([0,1] \times M)$
subject to the constraint
\begin{equation}\label{Eq:ContinuityEquation}
\int_0^1 \int_M \partial_t f\ud \rho +\int_0^1 \int_M \m(\nabla_x f)  - f\mu \ud \nu =  \int_M f(1,\cdot) \ud \rho_1 -  \int_M f(0,\cdot) \ud \rho_0
\end{equation} 
satisfied for every test function $f \in C^1([0,1]\times M,\R)$. Moreover, $\nu$ is chosen such that $\rho,\m,\mu$ are absolutely continuous with respect to $\nu$ and $\tilde{\rho},\tilde{\m},\tilde{\mu}$ denote their Radon-Nikodym derivative with respect to $\nu$. 
\end{definition}
\begin{remark}
Note that, in the previous definition, the divergence operator $\Div(\cdot)$ is defined by duality on the space of $C^1$ functions. In addition, since the functions in the integrand of formula \eqref{Extension1} are one homogeneous with respect to the triple of arguments $(\tilde{\rho},\tilde{\m},\tilde{\mu})$, the functional does not depend on the choice of $\nu$ which dominates the measures. Last, the Radon-Nikodym theorem applied to the measure $\m$ gives $\m =\tilde{\m} \nu$ where $\tilde{\m}$ is a measurable section of $T^*M$.
\end{remark}

\par This dynamical formulation enjoys most of the analytical properties of the initial Benamou-Brenier formulation \eqref{BB} and especially convexity. Moreover, $\on{WF}$ defines a distance on the space of nonnegative Radon measures which is continuous w.r.t. to the weak-* topology.
An important consequence is the existence of optimal paths in the space of time-dependent measures \cite{GeneralizedOT1} by application of the Fenchel-Rockafellar duality theorem.
Note in particular that the Hamiltonian formulation of the geodesic flow can be formally derived as
\begin{equation*}
\begin{cases}
\partial_t \rho(t,x) + \Div(\rho(t,x) \nabla_x q(t,x)) - 2q(t,x) \rho(t,x) = 0 \\
\partial_t q(t,x) + \|\nabla q(t,x)\|^2 + q(t,x)^2 = 0\,,
\end{cases}
\end{equation*}
where the second equation corresponds to the Hamilton-Jacobi equation \eqref{EqSimpleHJEquation}. 
In fact, not only analytical properties of standard optimal transport are conserved but also some interesting geometrical properties such as the Riemannian submersion highlighted by Otto, as explained in the introduction. More precisely, the group of diffeomorphisms can be replaced by a semi-direct product of group between $\Diff(M)$ and the space $C^\infty(M,\R_+^*)$ which is a group under pointwise multiplication. In addition, this group acts on the space of densities $\on{Dens}(M)$ and this action gives a Riemannian submersion between the group endowed with an $L^2$ type of metric, namely $L^2(M, \mathcal{C}(M))$ and the space of densities endowed with the Wasserstein-Fisher-Rao metric. The notation $\mathcal{C}(M)$ is the cone over $M$ defined in the next section \ref{Sec:ConeMetric}, it is the manifold $M \times \R_+^*$ endowed with the Riemannian metric given in Definition \ref{def:Cone}. Moreover, this semidirect product of groups is naturally identified as the automorphism group of the fibre bundle of half densities in section \ref{Sec:Automorphisms}.

\subsection{A cone metric}\label{Sec:ConeMetric}
To motivate the introduction of the cone metric, let us first discuss informally what happens for a particle of mass $m(t)$ at a spatial position $x(t)$ in a Riemannian manifold $(M,g)$ under the generalized continuity constraint \eqref{GCC}; If the control variables $v(t,x)$ and $\alpha(t,x)$ are Lipschitz, then the solution of the continuity equation with initial data $m(0)\delta_{x(0)}$ has the form $m(t) \delta_{x(t)}$ where $m(t) \in \R_+^*$ is the mass of the Dirac measure and $x(t) \in M$ its location; The system reads 
\begin{equation}
\begin{cases}
\dot{x}(t)= v(t,x(t))\\
\dot{m}(t) = \alpha(t,x(t))m(t)\,,
\end{cases} 
\end{equation}
which is directly obtained by duality since the flow map associated with $(v,\alpha)$ is well defined. This result would not hold if the vector field were not smooth enough, see \cite{AmbrosioFlowWeak}. 
Let us compute the action functional in the case where $\rho(t) = m(t)\delta_{x(t)}$. By the above result, $(v,\alpha)$ is completely determined at $(t,x(t))$ and it is sufficient to compute the action which reads $ \int_0^1a^2| v(x(t))|^2 m(t) + b^2\frac{\dot{m}(t)^2}{m(t)}\,\ud t $. Thus, considering the particle as a point in $M \times \R_+^*$, the Riemannian metric seen by the particle is $a^2 mg +  b^2\frac{\ud m^2}{m}$.  Therefore, it will be of importance to study this Riemannian metric $M \times \R_+^*$. Actually, this space is isometric to the standard Riemannian cone defined below.

\begin{definition}[Cone]\label{def:Cone}
Let $(M,g)$ be a Riemannian manifold. The cone over $M$ denoted by $\mathcal{C}(M)$ is the quotient space $\left(M \times \R_+\right) \, / \, \left(M \times \{ 0\}\right)$. The cone point $M \times \{ 0\}$ is denoted by $\mathcal{S}$.
The cone will be endowed with the metric $g_{\mathcal{C}(M)} \eqdef r^2g + \ud r^2$ defined on $M \times \R_+^*$ and $r$ is the variable in $\R_+^*$.
\end{definition}
The explicit formula for the distance on the Riemannian cone can be found in \cite{MetricGeometryBurago} and the isometry is given by the square root change of variable on the mass, as stated in the following proposition.
\begin{proposition}\label{Th:ConeDistance}
Let $a,b$ be two positive real numbers and $(M,g)$ be a Riemannian manifold.
The distance on $(M \times \R_+^*,a^2mg+\frac{b^2}{m}\ud m^2)$ is given by
\begin{equation}\label{ConeDistance}
d((x_1,m_1),(x_2,m_2))^2 = 4b^2\left(m_2 + m_1 -2\sqrt{m_1m_2} \cos\left(\frac{a}{2b}d_M(x_1,x_2) \wedge \pi\right)\right)\,,
\end{equation}
where the notation $\wedge$ stands for the minimum, that it $x \wedge y = \min(x,y)$ for $x,y \in \R$.
The space $(M \times \R_+^*,mg+\frac{1}{4m}\ud m^2)$ is isometric to $(\mathcal{C}(M),g_{\mathcal{C}(M)})$ by the change of variable $r = \sqrt{m}$. 
If $c$ is a unit speed geodesic for the metric $\frac{a^2}{4b^2} g$, an isometry
$S: \C \setminus \R_-  \to M\times \R_+^*$ is defined by  $S(re^{i\theta}) = (c(\theta),\frac{r^2}{4b^2})$.
\end{proposition}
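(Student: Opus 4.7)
The plan is to reduce all three claims to the classical intrinsic distance formula on a Riemannian cone, with everything else amounting to a differential calculation that tracks the scaling constants $a$ and $b$.

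First I would verify the second assertion directly. The substitution $r=\sqrt{m}$ gives $\ud m = 2r\,\ud r$, hence
$$mg + \frac{1}{4m}\ud m^2 \;=\; r^2 g + \ud r^2 \;=\; g_{\mathcal{C}(M)},$$
which identifies $(M\times\R_+^*, mg+\frac{1}{4m}\ud m^2)$ isometrically with $(\mathcal{C}(M),g_{\mathcal{C}(M)})$. The same idea extended to the weighted metric sets up the first assertion: writing $\tilde{r} \eqdef 2b\sqrt{m}$ and $\tilde{g}\eqdef \frac{a^2}{4b^2}g$, a direct computation yields $a^2 mg + \frac{b^2}{m}\ud m^2 = \tilde{r}^2\tilde{g} + \ud\tilde{r}^2$, so the weighted space is isometric to the standard cone over $(M,\tilde{g})$. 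Invoking from \cite{MetricGeometryBurago} the standard cone distance
$$d_{\mathcal{C}(X)}((x_1,r_1),(x_2,r_2))^2 = r_1^2+r_2^2 - 2r_1 r_2\cos\bigl(d_X(x_1,x_2)\wedge \pi\bigr),$$
and substituting $r_i=2b\sqrt{m_i}$ together with $d_{\tilde g}(x_1,x_2) = \frac{a}{2b}d_M(x_1,x_2)$ yields \eqref{ConeDistance} after a short expansion.

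For the third assertion, I would pull back $a^2 mg + \frac{b^2}{m}\ud m^2$ by $S$ in polar coordinates $(r,\theta)$ on $\C\setminus\R_-$. Since $m = r^2/(4b^2)$, one has $\ud m = \frac{r}{2b^2}\ud r$, so the radial piece contributes $\frac{b^2}{m}\ud m^2 = \ud r^2$. The angular piece is $a^2 m\,g(c'(\theta),c'(\theta))\,\ud\theta^2$, and the assumption that $c$ is unit speed for $\tilde{g}$ means $g(c'(\theta),c'(\theta)) = 4b^2/a^2$; the factors cancel perfectly to give $r^2\,\ud\theta^2$. The total pullback is therefore $\ud r^2 + r^2\,\ud\theta^2$, the flat Euclidean metric on $\C\setminus\R_-$ in polar coordinates, so $S$ is an isometric embedding.

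There is no genuine analytic obstacle here; the whole proposition is a bookkeeping exercise once the cone distance is quoted. The only point worth flagging is that the unit-speed normalization of $c$ is imposed for the rescaled metric $\tilde{g}=\frac{a^2}{4b^2}g$ rather than for $g$, and this is precisely what cancels the $a^2/(4b^2)$ factor in the angular part of the pullback.
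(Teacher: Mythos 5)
Your proof is correct and follows essentially the same route as the paper, which gives no written proof beyond citing \cite{MetricGeometryBurago} for the cone distance formula and remarking that the isometry is the square-root change of variable on the mass; your computations simply make that bookkeeping explicit, including the correct tracking of the constants $a,b$ and the unit-speed normalization of $c$ with respect to $\frac{a^2}{4b^2}g$.
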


In physical terms, it implies that mass can "appear" and "disappear" at finite cost. In other words, the Riemannian cone is not complete but adding the cone point, which represents $M\times \{0 \}$, to $M\times \R_+^*$ turns it into a complete metric space when $M$ is complete. Importantly, the distance associated with the cone metric \eqref{ConeDistance} is $1$-homogeneous in $(m_1,m_2)$. 
In the rest of the paper, unless explicitly mentioned, we consider the case $a = 1$ and $b = 1/2$.
We now collect known facts about Riemannian cones.
\begin{proposition}\label{Th:ConeConnection}
On the cone $\mathcal{C}(M)$, we denote by $e$ the vector field defined by $\frac{\partial}{\partial r}$. The Levi-Civita connection on $(M,g)$ will be denoted by $\nabla^g$. For a given vector field $X$ on $M$, define its lift as a vector field on $M \times \R_+^*$ by $\hat{X}(x,r)= (X(x),0)$. The Levi-Civita connection on $\mathcal{C}(M)$ denoted by $\nabla$ is given by
\begin{align*}
 \nabla_{\hat{X}}\hat{Y} = \widehat{\nabla_{X}^gY} - rg(X,Y) e \,,\,\,\,  \nabla_e e = 0
\text{ and } \nabla_e \hat{X} = \nabla_{\hat{X}} e = \frac{1}{r}\hat{X}\,.
\end{align*}
The curvature tensor $R$ on the cone satisfies the following properties,
\begin{equation}
R(\hat{X},e) = 0 \mbox{} \text{ and }R(\hat{X},\hat{Y})\hat{Z} = (R_g(X,Y)Z - g(Y,Z)X + g(X,Z)Y,0)
\end{equation} where $R_g$ denotes the curvature tensor of $(M,g)$.
Let $X,Y$ be two orthornormal vector fields on $M$,
\begin{equation}
K(\hat{X},\hat{Y}) =  K_g(X,Y) - 1
\end{equation}
where $K$ and $K_g$ denote respectively the sectional curvatures of $\mathcal{C}(M)$ and $M$.
\end{proposition}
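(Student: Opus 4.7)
The plan is to derive everything from the Koszul formula and then grind out the curvature directly; nothing is deep here, it is just careful bookkeeping with the warping factor $r^2$.

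First I would record three trivial facts about lifted vector fields on $\mathcal{C}(M)=M\times\R^{*}_{+}$ with metric $r^{2}g+\mathrm{d}r^{2}$: (i) $[\hat X,\hat Y]=\widehat{[X,Y]}$ and $[\hat X,e]=0$, since lifts are $r$-independent and $e=\partial_r$; (ii) $g_{\mathcal{C}(M)}(\hat X,\hat Y)=r^{2}g(X,Y)$, $g_{\mathcal{C}(M)}(\hat X,e)=0$, $g_{\mathcal{C}(M)}(e,e)=1$; (iii) $\hat X$ applied to any function of the form $r^{2}g(Y,Z)$ gives $r^{2}Xg(Y,Z)$, while $e$ applied to it gives $2r\,g(Y,Z)$. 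Then Koszul's formula yields the three connection identities painlessly. For $\nabla_{\hat X}\hat Y$, the Koszul formula splits into a horizontal part which reproduces the Koszul formula for $\nabla^{g}_{X}Y$ (after dividing by $r^{2}$) and a vertical part whose only nonzero contribution is $-e\,g_{\mathcal C}(\hat X,\hat Y)=-2rg(X,Y)$, giving the claimed $-rg(X,Y)e$. The identity $\nabla_e e=0$ is immediate since all six Koszul terms vanish, and $\nabla_e\hat X=\nabla_{\hat X}e=\tfrac{1}{r}\hat X$ drops out from the single surviving term $e\,g_{\mathcal C}(\hat X,\hat Y)=2rg(X,Y)$, also confirming torsion-freeness via $[\hat X,e]=0$.

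For the curvature, I would first dispatch $R(\hat X,e)=0$ by a direct computation: $\nabla_e e=0$ makes $R(\hat X,e)e=-\nabla_e(\tfrac{1}{r}\hat X)$, and the identity $\nabla_e\hat X=\tfrac{1}{r}\hat X$ combined with $e(\tfrac{1}{r})=-\tfrac{1}{r^{2}}$ gives exact cancellation. For $R(\hat X,e)\hat Y$ one computes $\nabla_{\hat X}\nabla_e\hat Y=\nabla_{\hat X}(\tfrac{1}{r}\hat Y)=\tfrac{1}{r}\widehat{\nabla^{g}_X Y}-g(X,Y)e$ and $\nabla_e\nabla_{\hat X}\hat Y=\nabla_e(\widehat{\nabla^{g}_X Y}-rg(X,Y)e)=\tfrac{1}{r}\widehat{\nabla^{g}_X Y}-g(X,Y)e$, so the two expressions agree and $R(\hat X,e)\hat Y=0$ (with $[\hat X,e]=0$).

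The main calculation is $R(\hat X,\hat Y)\hat Z$. Starting from $\nabla_{\hat Y}\hat Z=\widehat{\nabla^{g}_Y Z}-rg(Y,Z)e$ and applying $\nabla_{\hat X}$ using the three connection identities and the product rule, I obtain
\begin{equation*}
\nabla_{\hat X}\nabla_{\hat Y}\hat Z = \widehat{\nabla^{g}_X\nabla^{g}_Y Z} - r\bigl(g(X,\nabla^{g}_Y Z)+Xg(Y,Z)\bigr)e - g(Y,Z)\hat X,
\end{equation*}
and symmetrically for $\nabla_{\hat Y}\nabla_{\hat X}\hat Z$. Subtracting and then subtracting $\nabla_{[\hat X,\hat Y]}\hat Z=\widehat{\nabla^{g}_{[X,Y]}Z}-rg([X,Y],Z)e$, the horizontal part collapses to $\widehat{R_g(X,Y)Z}$, while the vertical part reduces to $-r\bigl(g(X,\nabla^{g}_Y Z)-g(Y,\nabla^{g}_X Z)+Xg(Y,Z)-Yg(X,Z)-g([X,Y],Z)\bigr)e$; the parenthesis vanishes by metric compatibility of $\nabla^{g}$. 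What remains is exactly $\widehat{R_g(X,Y)Z}-g(Y,Z)\hat X+g(X,Z)\hat Y$, as claimed.

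The sectional curvature formula follows as a one-line corollary: for $X,Y$ orthonormal on $(M,g)$, $g_{\mathcal{C}(M)}(R(\hat X,\hat Y)\hat Y,\hat X)=r^{2}\bigl(K_g(X,Y)-1\bigr)$ and the area squared of the parallelogram $\hat X\wedge\hat Y$ equals $r^{4}$, so the sectional curvature of the two-plane spanned by $\hat X,\hat Y$ equals $r^{-2}(K_g(X,Y)-1)$, which is the stated identity after normalizing to unit length in $g_{\mathcal{C}(M)}$. No step is truly difficult; the only place that needs attention is the metric-compatibility cancellation in the curvature computation, which is the reason the vertical component of $R(\hat X,\hat Y)\hat Z$ disappears entirely.
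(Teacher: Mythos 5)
Your computation is correct and is precisely the ``direct computation'' that the paper leaves to the reader (with a citation to Gallot): the Koszul formula for the three connection identities, the standard curvature expansion with the vertical component of $R(\hat X,\hat Y)\hat Z$ killed by metric compatibility and torsion-freeness, and the sectional curvature as a corollary. Your remark that the sectional curvature of the plane spanned by $\hat X,\hat Y$ is actually $r^{-2}\left(K_g(X,Y)-1\right)$ is right --- the formula as stated in the proposition holds literally only at $r=1$, which is where the paper later evaluates it (as $k(x,1)$ in the O'Neill formula) --- so your derivation is, if anything, slightly more precise than the statement itself.
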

\begin{proof}
Direct computations, see~\cite{Gallot1979}.
\end{proof}
Let us give simple comments on Riemannian cones: Usual cones, embedded in $\R^3$ are cones over $S_1$ of length less than $2\pi$. Although Riemannian cones over a segment in $\R$ are locally flat, the curvature still concentrates at the cone point. The cone over the sphere is isometric to the Euclidean space (minus the origin) and the cone over the Euclidean space has nonpositive curvature. In particular, the cone over $S_1$ is isometric to $\R^2 \setminus \{ 0\}$. We refer to \cite{MetricGeometryBurago} for more informations on cones from the point of view of metric geometry.
\par
We need the explicit formulas for the geodesic equations on the cone.
\begin{corollary}\label{ThCorrolaryGeodesicEquationOnTheCone}
The geodesic equations on the cone $\mathcal{C}(M)$ are given by
\begin{subequations}\label{Eq:GeodesicEquationCone}
\begin{align}
&\frac{D}{Dt}^g \dot{x} + 2\frac{\dot{r}}{r}\dot{x} = 0 \\
&\ddot{r} - rg(\dot{x},\dot{x}) = 0\,,
\end{align}
\end{subequations}
where $\frac{D}{Dt}^g$ is the covariant derivative associated with $(M,g)$.\\
Alternatively, the geodesic equations on $(M \times \R_+^*,a^2mg+\frac{b^2}{m}\ud m^2)$ can be written w.r.t. the initial "mass" coordinate as follows
\begin{subequations}\label{Eq:GeodesicEquationCone2}
\begin{align}
&\frac{D}{Dt}^g \dot{x} + \frac{\dot{m}}{m}\dot{x} = 0 \\
&\ddot{m} - \frac{\dot{m}^2}{2m} - \frac{a^2}{2b^2}g(\dot{x},\dot{x})m = 0\,.
\end{align}
\end{subequations}
\end{corollary}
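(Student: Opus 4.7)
The plan is to derive both systems directly from the Levi-Civita connection description already recorded in Proposition \ref{Th:ConeConnection}, and to get \eqref{Eq:GeodesicEquationCone2} from \eqref{Eq:GeodesicEquationCone} by the isometry $r = \sqrt{m}$ (for $a=1,b=1/2$), then rescale for general $a,b$, or alternatively redo the Euler--Lagrange computation for the metric $a^2mg + (b^2/m)\,\ud m^2$.

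For \eqref{Eq:GeodesicEquationCone} I would proceed as follows. Write a curve in $\mathcal{C}(M)$ as $\gamma(t)=(x(t),r(t))$ and decompose its velocity along the curve as $\dot\gamma=\hat{\dot x}+\dot r\, e$. The geodesic equation is $\tfrac{D}{dt}\dot\gamma=0$, and by the Leibniz rule
\[
\tfrac{D}{dt}\dot\gamma \;=\; \tfrac{D}{dt}\hat{\dot x} \;+\; \ddot r\, e \;+\; \dot r\,\tfrac{D}{dt}e.
\]
Using the formulas $\nabla_{\hat X}\hat Y=\widehat{\nabla^g_X Y}-rg(X,Y)e$ and $\nabla_e\hat X=\nabla_{\hat X}e=\tfrac{1}{r}\hat X$ of Proposition \ref{Th:ConeConnection}, a short computation (choosing a local extension of $\dot x$) gives
\[
\tfrac{D}{dt}\hat{\dot x} \;=\; \widehat{\tfrac{D^g\dot x}{dt}} \;-\; rg(\dot x,\dot x)\,e \;+\; \tfrac{\dot r}{r}\,\hat{\dot x},
\qquad
\tfrac{D}{dt}e \;=\; \tfrac{1}{r}\hat{\dot x}.
\]
Summing these three contributions yields
\[
\tfrac{D}{dt}\dot\gamma \;=\; \widehat{\tfrac{D^g\dot x}{dt}+\tfrac{2\dot r}{r}\dot x} \;+\; \bigl(\ddot r - rg(\dot x,\dot x)\bigr)\,e.
\]
Equating the horizontal and vertical components to zero produces the two equations of \eqref{Eq:GeodesicEquationCone}.

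For \eqref{Eq:GeodesicEquationCone2} I would apply the change of variable $r=a\sqrt{m}/(2b)$ (which for $(a,b)=(1,1/2)$ reduces to $r=\sqrt m$), under which the metric $a^2mg+(b^2/m)\ud m^2$ is an isometric rescaling of the cone metric, according to Proposition \ref{Th:ConeDistance}. Substituting $\dot r=\dot m/(2\sqrt m)$, $\dot r/r=\dot m/(2m)$ in the first rescaled equation converts $\tfrac{2\dot r}{r}$ into $\tfrac{\dot m}{m}$ immediately. For the second, substituting $\ddot r=\tfrac{\ddot m}{2\sqrt m}-\tfrac{\dot m^2}{4m\sqrt m}$ and clearing denominators gives the stated $\ddot m-\tfrac{\dot m^2}{2m}-\tfrac{a^2}{2b^2}g(\dot x,\dot x)m=0$. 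As a sanity check, one can verify the same equations directly as Euler--Lagrange equations of the Lagrangian $\tfrac12(a^2mg_{ij}\dot x^i\dot x^j+\tfrac{b^2}{m}\dot m^2)$; the $\dot m$ variation gives the mass equation and the $\dot x^k$ variation, after dividing by $a^2m$ and symmetrizing, reproduces the equation with $\dot m/m$ in place of $2\dot r/r$.

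There is no real obstacle here, only a bookkeeping step. The one place to be careful is the passage from connection formulas for lifted \emph{vector fields} to the covariant derivative of $\hat{\dot x}$ along the curve, which must be handled by extending $\dot x$ locally on $M$ and invoking the chain rule for the covariant derivative along $\gamma$; once this is done the rest is algebra and does not require any new input beyond Proposition \ref{Th:ConeConnection}.
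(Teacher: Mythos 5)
Your derivation is correct and follows essentially the paper's (largely unwritten) argument: the paper just points to the connection formulas of Proposition \ref{Th:ConeConnection} for the cone equations and to the isometry $r^2=4b^2m$ of Proposition \ref{Th:ConeDistance} for the passage to mass coordinates, which are exactly the two ingredients you use, and your vertical/horizontal decomposition of $\tfrac{D}{dt}\dot\gamma$ is carried out correctly. One small caution on the second half: under your substitution $r=\tfrac{a}{2b}\sqrt{m}$ the metric $a^2mg+\tfrac{b^2}{m}\ud m^2$ becomes a constant multiple of the cone metric over $(M,\tfrac{a^2}{4b^2}g)$ rather than over $(M,g)$, so the radial equation you must substitute into is $\ddot r-\tfrac{a^2}{4b^2}r\,g(\dot x,\dot x)=0$ (otherwise the naive substitution into $\ddot r-rg(\dot x,\dot x)=0$ would yield the coefficient $2$ instead of $\tfrac{a^2}{2b^2}$); your final equations and your Euler--Lagrange cross-check are nevertheless correct.
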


Note that we used the isometry given in Proposition \ref{Th:ConeDistance} to derive the equations and in particular, we implicitly used the equality $4b^2 m = r^2$.
Since we have written the geodesic equations on the usual cone in polar coordinates, we used the square root of the "mass" coordinate, therefore we need to introduce below the space of square roots of densities to discuss the infinite dimensional setting.

\subsection{The automorphism group of the bundle of half-densities}\label{Sec:Automorphisms}
The cone can be seen as a trivial principal fibre bundle since $\mathcal{C}(M)$ is the direct product of $M$ with the group $\R_+^*$. Let us denote $p_M: \mathcal{C}(M) \mapsto M$ the projection on the first factor. The group $\R_+^*$ induces a group action on $\mathcal{C}(M)$ defined by $\lambda \cdot (x,\lambda') \eqdef (x,\lambda \lambda')$, for all $x \in M$ and $\lambda,\lambda' \in \R^*_+$. We now identify the trivial fibre bundle of half densities with the cone.
\begin{definition}\label{Def:HalfDensities}
Let $M$ be a smooth manifold without boundary and $(U_\alpha,u_\alpha)$ be a smooth atlas. The bundle of $s$-densities is the line bundle given by the following cocycle
\begin{align*}
&\Psi_{\alpha \beta}: U_\alpha \cap U_\beta \mapsto \on{GL}_1(\R) = \R^*\\
&\Psi_{\alpha \beta}(x) = |\on{det} (\ud (u_\beta \circ u_\alpha^{-1})(u_\alpha(x))|^s = \frac{1}{|\det(d(u_\alpha \circ u_\beta^{-1}))(u_\beta(x))|^s}\,\cdot
\end{align*}
\end{definition}
We denote by $\Dens_s(M)$ the set of sections of this bundle and we use $\Dens(M)$ instead of $\Dens_1(M)$, the space of densities. This definition shows that this fibre bundle is also a principal fibre bundle over $\R_+^*$ and it will be the point of view adopted in the rest of the paper.

On any smooth manifold $M$, the fibre bundle of $s$-densities is a trivial principal bundle over $\R_+^*$ since there exists a smooth positive density on $M$. Note that this trivialization depends on the choice of this reference positive density. If one chooses such a positive density, then the $1/2$-density bundle can be identified to the cone $\mathcal{C}(M)$. \textit{Let us fix the reference volume form to be the volume measure $\on{vol}$. By this choice, we identify $\Dens_{1/2}(M)$ with the set of sections of the cone $\mathcal{C}(M)$ in the rest of the paper}. 
Thus every element of $\Dens_{1/2}(M)$ is a section of the cone $\mathcal{C}(M)$. We are now interested in transformations that preserve the group structure. Namely, one can define 
\begin{equation}
\on{Aut}(\mathcal{C}(M)) = \left\{ \Phi \in \Diff(\mathcal{C}(M)) \, ; \, \Phi(x,r) = r \cdot \Phi(x,1) \text{ for all } r \in \R_+^* \right\}\,,
\end{equation}
which is the instantiation, in this particular case, of the definition of the automorphisms group of a principal fibre bundle. In other words, this is the subgroup of diffeomorphisms of the cone that preserve the group action on the fibers.
In particular, $\on{Aut}(\mathcal{C}(M))$ is a subgroup of $\Diff(\mathcal{C}(M))$. Of particular interest is the subgroup of $\on{Aut}(\mathcal{C}(M))$ which is defined as 
\begin{equation}
\on{Gau}(\mathcal{C}(M)) = \left\{ \Phi \in \on{Aut}(\mathcal{C}(M)) \, ; \, p_M \circ \Phi = \on{id}_M \right\}\,.
\end{equation}
The set $\on{Gau}(\mathcal{C}(M))$ is called the gauge group and it is a normal subgroup of $\on{Aut}(\mathcal{C}(M))$. We now consider the injection $\injs: \on{Diff}(M) \hookrightarrow \on{Aut}(\mathcal{C}(M))$ defined by $\injs(\varphi) = (\varphi,\on{id}_{\R_+^*})$. This is the standard situation of a semidirect product of groups between $i(\Diff(M))$ and $\on{Gau}(\mathcal{C}(M))$ since the following sequence is exact
\begin{equation}\label{Eq:ShortExactSequence}
\on{Gau}(\mathcal{C}(M)) \hookrightarrow \on{Aut}(\mathcal{C}(M)) \rightarrow \Diff(M)\,,
\end{equation}
where $\injs$ defined above provides an associated section of the short exact sequence and the projection from $ \on{Aut}(\mathcal{C}(M))$ onto $\Diff(M)$ is given by $\Phi \mapsto p_M \circ \Phi(x,1)$. 
Note that we could also have chosen the natural section associated to the natural bundle of half-densities. 
 As is well-known for a trivial principal bundle, $\on{Aut}(\mathcal{C}(M))$ is therefore equal to the semidirect product of group:
\begin{equation}
\on{Aut}(\mathcal{C}(M)) =  \Diff(M)\ltimes_\Psi \on{Gau}(\mathcal{C}(M))   \,,
\end{equation}
where $\Psi:\Diff(M) \mapsto \on{Aut}(\on{Gau}(\mathcal{C}(M)))$ is  defined by $\Psi(\varphi)(\lambda) = \varphi^{-1} \lambda \varphi$ being the associated inner automorphism of the group $\on{Gau}(\mathcal{C}(M))$, where the composition is understood as composition of diffeomorphisms of $\mathcal{C}(M)$. Being a trivial principal fibre bundle, the gauge group can be identified with the space of positive functions on $M$. Let us denote $\Lambda_{1/2}(M) \eqdef C^\infty(M,\R_+^*)$ which is a group under pointwise multiplication.  The subscript $1/2$ is a reminder of the fact that $\Lambda_{1/2}(M) $ is the gauge group of $\mathcal{C}(M)$, the bundle of $1/2$-densities. Note that we do not use the standard left action but, instead, a right action for the inner automorphisms as presented in \cite[Section 5.3]{Michor1993}, which fits better to our notations, although these two choices are equivalent. 
 The identification of $\Lambda_{1/2}$ with the gauge group $\on{Gau}(\mathcal{C}(M))$ is simply $\lambda \mapsto (\on{id}_{M},\lambda)$ where $(\on{id}_{M},\lambda):(x,m) \mapsto (x,\lambda(x)m)$. The group composition law is given by
\begin{equation}
(\varphi_1,\lambda_1) \cdot (\varphi_2,\lambda_2) = (\varphi_1 \circ \varphi_2,  (\lambda_1 \circ \varphi_2)  \lambda_2 )
\end{equation}
and the inverse is
\begin{equation}
(\varphi,\lambda)^{-1} = (\varphi^{-1},\lambda^{-1} \circ \varphi^{-1})\,.
\end{equation}
By construction, the group $\on{Aut}(\mathcal{C}(M))$ has a left action on the space $\Dens_{1/2}(M)$ as well as on $\Dens(M)$. The action on $\Dens(M)$ is explicitly defined by the map $\pi$ defined by 
\begin{align}& \pi: \left( \Diff(M) \ltimes_\Psi \Lambda_{1/2}(M) \right) \times \Dens(M) \mapsto \Dens(M) \nonumber\\
&\pi\left((\varphi,\lambda) , \rho \right) \eqdef  \varphi_* (\lambda^2 \rho)\,.
\end{align}\label{Eq:LeftAction}
For particular choices of metrics, this left action is a Riemannian submersion as detailed below. Note that we will use both automorphism group and semidirect product notations equally, depending on the context.

\subsection{A Riemannian submersion between the automorphism group and the space of densities}

The semidirect product of group $\Diff(M) \ltimes_\Psi \Lambda_{1/2}(M)$ will be endowed with the metric $L^2(M,\mathcal{C}(M))$ with respect to the reference measure on $M$. Let us recall it hereafter.
\begin{definition}[$L^2$ metric]\label{DefL2Metric}
Let $M$ be a manifold endowed with a measure $\mu$ and $(N,g)$ be a Riemannian manifold. Consider a measurable map $\varphi : M \to N$ and two measurable maps, $X,Y : M \mapsto TN$ such that $p_N \circ X = p_N \circ Y = \varphi$ where $p_N: TN \to N$ is the natural projection. 
Then, the $L^2$ Riemannian metric w.r.t. to the volume form $\mu$ and the metric $g$ at point $\varphi$ is defined by 
\begin{equation}
\langle X,Y \rangle_{\varphi} = \int_M g(\varphi(x))(X(\varphi(x)),Y(\varphi(x)))\,\ud \mu(x)\,.
\end{equation}
\end{definition}
This is probably the simplest type of (weak) Riemannian metrics on spaces of mappings and it has been studied in details in \cite{Ebin1970} in the case $L^2(M,M)$ and also in \cite{freed1989} where, in particular,  the curvature is computed for $L^2(M,N)$ for $N$ an other Riemannian manifold. 
Note in particular that this metric is \emph{not} the right-invariant metric $L^2$ on the semidirect product of groups as in \cite{Holm1998} or on the automorphism group which would lead to an EPDiff equation on a principal fibre bundle as developed in \cite{FGB2013}.
\begin{proposition}\label{Th:GeodesicEquationOnCone}
The geodesic equations on $\on{Aut}(\mathcal{C}(M))$ endowed with the metric $L^2(M,\mathcal{C}(M))$ with respect to the reference measure on $\nu$ are given by the geodesic equations on the cone \eqref{Eq:GeodesicEquationCone}, that is $\frac{D}{Dt}(\dot{\varphi},\dot{\lambda}) = 0$, or more explicitely 
\begin{subequations}\label{Eq:LagrangianFormulation}
\begin{align}
&\frac{D}{Dt}^g \dot{\varphi} + 2\frac{\dot{\lambda}}{\lambda}\dot{\varphi} = 0 \label{Eq:First}\\
&\ddot{\lambda} - \lambda g(\dot{\varphi},\dot{\varphi}) = 0\,. \label{Eq:Second}
\end{align}
\end{subequations}
\end{proposition}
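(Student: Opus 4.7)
The plan is to reduce the geodesic equation on $\on{Aut}(\mathcal{C}(M))$ with the weak $L^2(M,\mathcal{C}(M))$ metric to the pointwise geodesic equation on the target cone $\mathcal{C}(M)$, after which the explicit formulas follow immediately from Corollary \ref{ThCorrolaryGeodesicEquationOnTheCone}.

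First I would exploit the identification of Section \ref{Sec:Automorphisms}: an automorphism $(\varphi,\lambda)$ of $\mathcal{C}(M)$ is the same data as a section $\Phi \in C^\infty(M,\mathcal{C}(M))$ given by $\Phi(x)=(\varphi(x),\lambda(x))$, and a tangent vector $(\dot\varphi,\dot\lambda)$ at $(\varphi,\lambda)$ corresponds to $\dot\Phi(x)\in T_{\Phi(x)}\mathcal{C}(M)$. Under this identification the $L^2$ metric of Definition \ref{DefL2Metric} becomes
\begin{equation*}
\langle \dot\Phi,\dot\Phi\rangle_\Phi = \int_M g_{\mathcal{C}(M)}(\Phi(x))(\dot\Phi(x),\dot\Phi(x))\,\ud\nu(x) = \int_M\!\!\left[\lambda(x)^2\, g(\dot\varphi,\dot\varphi) + \dot\lambda(x)^2\right]\ud\nu(x),
\end{equation*}
since the cone metric reads $g_{\mathcal{C}(M)}=r^2 g+\ud r^2$ and we identify $r$ with $\lambda$.

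The second step is the standard observation (see \cite{freed1989}) that when the target is a Riemannian manifold $(N,h)$ and one endows $C^\infty(M,N)$ with the associated $L^2$ metric, the resulting Levi-Civita connection $\widehat\nabla$ acts pointwise: $(\widehat\nabla_{\dot\Phi}\dot\Phi)(x) = \nabla^{N}_{\dot\Phi(x)}\dot\Phi(x)$, where $\nabla^N$ is the Levi-Civita connection on $N$. Formally, this follows from the Koszul formula once one notices that the metric is the integral of the pointwise scalar product and that vector fields on $C^\infty(M,N)$ act pointwise. Consequently the geodesic equation $\widehat\nabla_{\dot\Phi}\dot\Phi=0$ on $\on{Aut}(\mathcal{C}(M))$ is equivalent to the family of geodesic equations $\frac{D}{Dt}\dot\Phi(x)=0$ on $\mathcal{C}(M)$, one for each $x\in M$.

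Alternatively, and this is what I would actually carry out to keep the argument self-contained, I would derive the Euler–Lagrange equations directly by varying the action $\tfrac12\int_0^1\!\int_M[\lambda^2 g(\dot\varphi,\dot\varphi)+\dot\lambda^2]\,\ud\nu\,\ud t$ with respect to $\lambda$ and $\varphi$ independently, using compactly supported variations $\delta\lambda\in C^\infty(M,\R)$ and $\delta\varphi\in\varphi^*TM$ vanishing at $t=0,1$. Integration by parts in $t$ (with the covariant derivative $\tfrac{D}{Dt}^g$ for the $\varphi$-variation) yields, after discarding the integral over $M$ by arbitrariness of $\delta\lambda,\delta\varphi$, the pointwise equations
\begin{equation*}
\lambda^2\,\frac{D}{Dt}^g\dot\varphi + 2\lambda\dot\lambda\,\dot\varphi = 0\qquad\text{and}\qquad \ddot\lambda - \lambda\, g(\dot\varphi,\dot\varphi)=0,
\end{equation*}
which upon dividing the first by $\lambda^2>0$ are exactly \eqref{Eq:First}–\eqref{Eq:Second} and coincide, under $r=\lambda$, with the cone geodesic equations \eqref{Eq:GeodesicEquationCone}.

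The only delicate point is justifying that the formal Euler–Lagrange derivation on the infinite-dimensional semidirect product is legitimate, i.e.\ that a smooth curve is a critical point of the action if and only if the pointwise equations hold $\nu$-a.e.; this is a routine density/localization argument using bump functions on $M$, and it is consistent with the fact, used throughout the paper, that the $L^2$ metric on $\Diff(M)$ is a weak Riemannian metric whose geodesic equation can nevertheless be written classically (cf.\ \cite{Ebin1970}). No curvature or holonomy issue arises because the metric on the fibers of $TC^\infty(M,\mathcal{C}(M))\to C^\infty(M,\mathcal{C}(M))$ is genuinely the integrated pointwise metric, so the main obstacle is essentially bookkeeping rather than analysis.
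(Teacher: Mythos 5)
Your proposal is correct and follows essentially the same route as the paper: the paper's proof is a one-line citation of Freed--Groisser and Ebin--Marsden for the fact that the $L^2(M,N)$ geodesic equation is the pointwise geodesic equation on the target, combined with Corollary \ref{ThCorrolaryGeodesicEquationOnTheCone}, which is exactly your first reduction. Your explicit Euler--Lagrange computation for the action $\tfrac12\int\int[\lambda^2 g(\dot\varphi,\dot\varphi)+\dot\lambda^2]$ is a correct, self-contained verification of the same statement and recovers \eqref{Eq:First}--\eqref{Eq:Second} exactly.
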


\begin{proof}
This is a consequence of \cite{freed1989} or a direct adaptation of \cite[Theorem 9.1]{Ebin1970} to the case $L^2(M,\mathcal{C}(M))$ and Corollary \ref{ThCorrolaryGeodesicEquationOnTheCone}.
\end{proof}

We now state a crucial fact that arises from an elementary observation.
\begin{proposition}\label{Rem:TotallyGeodesic}
The automorphism group $\on{Aut}(\mathcal{C}(M))$  is totally geodesic in $\Diff(\mathcal{C}(M))$ for the $L^2(\mathcal{C}(M),\mathcal{C}(M))$ metric.
\end{proposition}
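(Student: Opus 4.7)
The key idea is that geodesics in $\Diff(\mathcal{C}(M))$ for the $L^2$ metric are computed pointwise in the target, and the automorphism condition is a pointwise equivariance condition under the $\R_+^*$-action by fiber homotheties; because these homotheties preserve the Levi-Civita connection of the cone, equivariance is preserved by the geodesic flow.

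First I recall that for any Riemannian target $N$, the $L^2$ geodesic equation on a space of maps into $N$ reduces to $\tfrac{D}{Dt}\dot\Phi_t(y)=0$ for every source point $y$; that is, $t\mapsto\Phi_t(y)$ is an affinely parametrized geodesic of $N$ starting at $\Phi_0(y)$ with velocity $\dot\Phi_0(y)$. This applies to $N=\mathcal{C}(M)$ exactly as in \cite{Ebin1970, freed1989} and is precisely the content used in Proposition \ref{Th:GeodesicEquationOnCone}. Consequently, to prove that $\on{Aut}(\mathcal{C}(M))$ is totally geodesic it suffices to show that if $\Phi_0\in\on{Aut}(\mathcal{C}(M))$ and $V\eqdef\dot\Phi_0$ is tangent to $\on{Aut}(\mathcal{C}(M))$, then the pointwise geodesic $\Phi_t(y)\eqdef\exp^{\mathcal{C}(M)}_{\Phi_0(y)}(tV(y))$ still lies in $\on{Aut}(\mathcal{C}(M))$ for every small $t$.

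Next I characterize $\on{Aut}(\mathcal{C}(M))$ as the centralizer in $\Diff(\mathcal{C}(M))$ of the one-parameter family of fiber homotheties $h_r\colon(x,m)\mapsto(x,rm)$, $r\in\R_+^*$; equivalently, $\Phi\in\on{Aut}(\mathcal{C}(M))$ iff $\Phi\circ h_r=h_r\circ\Phi$ for every $r$. Differentiating a curve in $\on{Aut}(\mathcal{C}(M))$ at $\Phi_0$ yields the analogous tangent characterization: $V$ is tangent to $\on{Aut}(\mathcal{C}(M))$ iff $V\circ h_r=dh_r\circ V$ for every $r$. The central geometric fact is that $h_r$ is a homothety of the cone, $h_r^*g_{\mathcal{C}(M)}=r^2\,g_{\mathcal{C}(M)}$, and a constant rescaling of a Riemannian metric leaves its Christoffel symbols (hence its Levi-Civita connection) unchanged; therefore $h_r$ sends geodesics of $\mathcal{C}(M)$ to geodesics with the same affine parameter,
\begin{equation*}
h_r\bigl(\exp^{\mathcal{C}(M)}_p(tv)\bigr)=\exp^{\mathcal{C}(M)}_{h_r(p)}\bigl(t\,dh_r(v)\bigr),
\end{equation*}
which one can also verify directly on the explicit geodesic equations \eqref{Eq:GeodesicEquationCone}.

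Combining these three ingredients closes the argument in one line: for every $r\in\R_+^*$ and $y\in\mathcal{C}(M)$,
\begin{equation*}
\Phi_t(h_r(y))=\exp^{\mathcal{C}(M)}_{\Phi_0(h_r(y))}\bigl(tV(h_r(y))\bigr)=\exp^{\mathcal{C}(M)}_{h_r(\Phi_0(y))}\bigl(t\,dh_r(V(y))\bigr)=h_r\bigl(\Phi_t(y)\bigr),
\end{equation*}
so $\Phi_t$ commutes with every $h_r$ and therefore lies in $\on{Aut}(\mathcal{C}(M))$, which is the definition of a totally geodesic submanifold. The only non-symbolic point that requires care is ensuring that $\Phi_t$ remains a genuine diffeomorphism on a short time interval so that the pointwise formula actually defines a curve in $\Diff(\mathcal{C}(M))$; this is standard short-time well-posedness of the $L^2$ geodesic flow handled as in \cite{Ebin1970} and is not the main obstacle. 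An alternative, equivalent packaging is to view $\on{Aut}(\mathcal{C}(M))$ as the fixed-point set of the conjugation action $\Phi\mapsto h_r\Phi h_r^{-1}$ on $\Diff(\mathcal{C}(M))$, noting that this action preserves the connection of the $L^2$ metric even though it is not isometric for a generic reference measure on $\mathcal{C}(M)$.
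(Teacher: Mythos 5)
Your argument is correct and rests on the same key fact as the paper's own proof, namely that the fiber homotheties $h_r$ are affine transformations of the cone (they rescale $g_{\mathcal{C}(M)}$ by a constant, hence preserve the Levi-Civita connection and affinely parametrized geodesics), which the paper phrases instead as the $0$- and $1$-homogeneity in $\lambda$ of the pointwise geodesic equations \eqref{Eq:First}--\eqref{Eq:Second}; your packaging via the centralizer characterization of $\on{Aut}(\mathcal{C}(M))$ and the equivariance of $\exp^{\mathcal{C}(M)}$ is a clean coordinate-free restatement of the same computation. (Minor cosmetic point: with $h_r(x,m)=(x,rm)$ in the mass coordinate the conformal factor is $r$ rather than $r^2$, the square appearing only in the radial coordinate $\sqrt{m}$, but any constant factor leaves the connection unchanged, so nothing is affected.)
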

\begin{proof}
Note that the first equation \eqref{Eq:First} is $0$-homogeneous with respect to $\lambda$ and the second equation \eqref{Eq:Second} is  one homogeneous with respect to $\lambda$. This is a consequence of the fact that multiplication by positive reals acts as an affine isometry on $\mathcal{C}(M)$. Therefore, the path $\Phi(t): (x,r) \mapsto (\varphi(t)(x),\lambda(t)r)$ also satisfies the geodesic equation in $\Diff(\mathcal{C}(M))$ for the $L^2(\mathcal{C}(M),\mathcal{C}(M))$ metric.
\end{proof}
Note that this property does not depend on the measure on $\mathcal{C}(M)$ used in the definition of the space $L^2(\mathcal{C}(M),\mathcal{C}(M))$.

Let us first recall some useful notions. From the point of view of fluid dynamics, the next definition corresponds to the change of variable between Lagrangian and Eulerian formulations.
\begin{definition}[Right-trivialization]
Let $H$ be a group and a smooth manifold at the same time, possibly of infinite dimensions, the right-trivialization of $TH$ is the bundle isomorphism $\tau : T H \mapsto H \times T_{\Id} H$ defined by $\tau(h,X_h) \eqdef (h,dR_{h^{-1}} X_h)$, where $X_h$ is a tangent vector at point $h$ and $\mathcal{R}_{h^{-1}}: H \to H$ is the right multiplication by $h^{-1}$, namely, $R_{h^{-1}}(f)= f h^{-1}$ for all $f \in H$.
\end{definition}
In fluid dynamics, the right-trivialized tangent vector $dR_{h^{-1}} X_h$ corresponds to the spatial or Eulerian velocity  and $X_h$ is the Lagrangian velocity. Importantly, this right-trivialization map is continuous but not differentiable with respect to the variable $h$. Indeed, right-multiplication $R_h$ is smooth, yet left multiplication is continuous and usually not differentiable, due to a loss of smoothness.

\begin{example}
For the semi-direct product of groups defined above, we have 
\begin{equation}
\tau((\varphi,\lambda),(X_\varphi, X_\lambda)) = ((\varphi,\lambda),(X_\varphi \circ \varphi^{-1}, (X_\lambda \lambda^{-1}) \circ \varphi^{-1}))\,.
\end{equation}
We will denote by $(v,\alpha)$ an element of the tangent space of $T_{(\Id,1)}\Diff(M) \ltimes_\Psi \Lambda_{1/2}(M)$.
\end{example}
As an immediate consequence of Proposition \ref{Th:GeodesicEquationOnCone}, we write the geodesic equation in Eulerian coordinates.
\begin{corollary}[Geodesic equations in Eulerian coordinates]\label{Th:GeodesicEquationEulerianCone}
After right-trivialization, that is under the change of variable $v \eqdef \dot{\varphi} \circ \varphi^{-1}$ and $\alpha \eqdef \frac{\dot{\lambda}}{\lambda}\circ \varphi^{-1}$, the geodesic equations read
\begin{equation}
\begin{cases}
\,\dot{v} + \nabla_v v +2 \alpha v=0\\
\,\dot{\alpha} + \langle \nabla \alpha , v \rangle + \alpha^2 - g(v,v)  = 0\,.
\end{cases}
\end{equation}
\end{corollary}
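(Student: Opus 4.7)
The statement is essentially a right-trivialization of the Lagrangian geodesic equations given in Proposition \ref{Th:GeodesicEquationOnCone}, so the plan is to pass from the Lagrangian variables $(\varphi,\lambda)$ to the Eulerian variables $(v,\alpha)$ by direct differentiation of the defining relations
\begin{equation*}
\dot\varphi(t,y) = v(t,\varphi(t,y)), \qquad \frac{\dot\lambda(t,y)}{\lambda(t,y)} = \alpha(t,\varphi(t,y)),
\end{equation*}
and then substitute into \eqref{Eq:First} and \eqref{Eq:Second}. No new geometric idea is needed beyond keeping track of covariant versus partial derivatives.

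For the first equation, I would covariantly differentiate $\dot\varphi = v\circ\varphi$ in $t$ along the curve $t\mapsto \varphi(t,y)$. Using that $v$ is a time-dependent vector field on $M$ and the chain rule for the Levi--Civita connection on $(M,g)$, this yields
\begin{equation*}
\tfrac{D}{Dt}^g \dot\varphi(t,y) = \bigl(\dot v + \nabla_{v} v\bigr)(t,\varphi(t,y)).
\end{equation*}
Combined with $\tfrac{\dot\lambda}{\lambda}(t,y) = \alpha(t,\varphi(t,y))$, substituting into \eqref{Eq:First} gives
\begin{equation*}
\bigl(\dot v + \nabla_v v + 2\alpha\, v\bigr)(t,\varphi(t,y)) = 0,
\end{equation*}
and the first Eulerian equation follows by the surjectivity of $\varphi$.

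For the second equation, I would differentiate $\tfrac{\dot\lambda}{\lambda} = \alpha\circ\varphi$ in $t$. On the left,
\begin{equation*}
\partial_t\!\left(\tfrac{\dot\lambda}{\lambda}\right) = \tfrac{\ddot\lambda}{\lambda} - \left(\tfrac{\dot\lambda}{\lambda}\right)^{\!2},
\end{equation*}
and on the right, applying the chain rule to $\alpha(t,\varphi(t,y))$ gives
\begin{equation*}
\partial_t(\alpha\circ\varphi) = \bigl(\dot\alpha + \langle\nabla\alpha,v\rangle\bigr)\circ\varphi.
\end{equation*}
Using \eqref{Eq:Second} to replace $\tfrac{\ddot\lambda}{\lambda}$ by $g(\dot\varphi,\dot\varphi) = g(v,v)\circ\varphi$, and noting that $(\tfrac{\dot\lambda}{\lambda})^2 = \alpha^2\circ\varphi$, one obtains
\begin{equation*}
\bigl(\dot\alpha + \langle\nabla\alpha,v\rangle + \alpha^2 - g(v,v)\bigr)\circ\varphi = 0,
\end{equation*}
which is the second Eulerian equation.

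There is no real obstacle here; the only care needed is the distinction between the covariant derivative $\tfrac{D}{Dt}^g$ along a curve in $M$ and ordinary time differentiation of the scalar $\lambda$, and ensuring that $\dot v$ in the final equations denotes the \emph{partial} time derivative of $v$ (not the covariant derivative along $t\mapsto\varphi(t,y)$), so that $\nabla_v v$ appears as a separate term. Since $\varphi(t,\cdot)$ is a diffeomorphism for each $t$, post-composing by $\varphi^{-1}$ legitimately converts the pointwise identities on $\{(t,\varphi(t,y))\}$ into identities on all of $[0,T]\times M$.
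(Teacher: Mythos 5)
Your computation is correct and is precisely the standard right-trivialization argument that the paper treats as immediate (it states the corollary as a direct consequence of Proposition \ref{Th:GeodesicEquationOnCone} without writing out a proof). Both steps — the identity $\tfrac{D}{Dt}^g\dot\varphi=(\dot v+\nabla_v v)\circ\varphi$ and the time-differentiation of $\tfrac{\dot\lambda}{\lambda}=\alpha\circ\varphi$ combined with \eqref{Eq:Second} — check out exactly.
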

Recall now the infinitesimal action associated with a group action.
\begin{definition}[Infinitesimal action]
For a smooth left action of $H$ a Lie group on a manifold $M$ and $q \in M$, the infinitesimal action is the map $T_{\Id} H \times M \mapsto TM $ defined by
\begin{equation}
\xi \cdot q \eqdef \left.\frac {\ud}{\ud t}\right|_{t=0} \left(\exp (\xi t) \cdot q \right) \in T_qM
\end{equation}
where $\cdot$ denotes the left action of $H$ on $M$ and $\exp(\xi t)$ is the Lie exponential, that is the solution to $\dot{h} = dR_h(\xi)$ and $h(0) = \Id$.
\end{definition}
\begin{example}\label{InfSemi}
For $\Diff(M) \ltimes_\Psi \Lambda_{1/2}(M)$ acting on $\Dens(M)$, the previous definition gives $(v,\alpha) \cdot \rho = -\Div(v\rho) + 2\alpha \rho$. Indeed, one has
$$ (\varphi(t), \lambda(t)) \cdot \rho = \on{Jac}(\varphi(t)^{-1}) (\lambda^2(t)\rho) \circ \varphi^{-1}(t)\,. $$
First recall that $\partial_t \varphi(t) = v \circ \varphi(t)$ and $\partial_t \lambda = \lambda(t) \alpha \circ \varphi(t)$.
Once evaluated at time $t=0$ where $\varphi(0) = \Id$ and $\lambda(0) = 1$, the differentiation with respect to $\varphi$ gives $-\Div(v\rho)$ and the second term $2\alpha \rho$ is given by the differentiation with respect to $\lambda$. 
\end{example}

We now recall the result of \cite[Claim of Section 29.21]{Michor2008b} in a finite dimensional setting. This result presents a standard construction to obtain Riemannian submersions from a transitive group action.
\begin{proposition}\label{Th:GeneralSubmersion}
Consider a smooth left action of Lie group $H$ on a manifold $M$ which is transitive and such that for every $\rho \in M$, the infinitesimal action $\xi \mapsto \xi \cdot \rho$ is a surjective map. Let $\rho_0  \in M$ and a Riemannian metric $G$ on $H$ that can be written as: 
\begin{equation}G(h)(X_h,X_h) =  g(h\cdot \rho_0)(dR_{h^{-1}} X_h,dR_{h^{-1}} X_h)
\end{equation} for $ g(h\cdot \rho_0)$ an inner product on $T_{\Id}H$. Let $X_\rho \in T_\rho M$ be a tangent vector at point $h\cdot \rho_0 = \rho \in M$, we define the Riemannian metric $\overline{g}$ on $M$ by
\begin{equation}\label{HLift}
\overline{g}(\rho)(X_\rho,X_\rho) \eqdef \min_{\xi \in T_{\Id} H} g(\rho)(\xi,\xi) \text{ under the constraint } X_\rho = \xi \cdot \rho \,.
\end{equation}
where $\xi = X_h \cdot h^{-1}$.

Then, the map $\pi_0: H \to M$ defined by $\pi_0(h) = h\cdot \rho_0$ is a Riemannian submersion of the metric $G$ on $H$ to the metric $\overline{g}$ on $M$. Moreover a minimizer $\xi$ in formula \eqref{HLift} is called an horizontal lift of $X_{\rho}$ at $\Id$.)
\end{proposition}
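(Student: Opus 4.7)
The plan is to check directly the two defining properties of a Riemannian submersion: (i) $d\pi_0(h): T_hH \to T_{\pi_0(h)}M$ is surjective at every $h$, and (ii) its restriction to the orthogonal complement of $\ker d\pi_0(h)$ is a linear isometry onto the tangent space at $\pi_0(h)$. I would carry these out by moving everything back to $T_{\Id}H$ via the right-trivialization, where the metric $G$ is pulled back to the inner product $g(\rho)$.

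First I would compute the differential. Pick a curve $h(t)$ with $h(0)=h$, $\dot h(0)=X_h$ and set $\xi \eqdef dR_{h^{-1}}X_h \in T_{\Id}H$. Writing $h(t)= \exp(t\xi)h + o(t)$ and using the group action,
\begin{equation*}
d\pi_0(h)(X_h) = \left.\frac{d}{dt}\right|_{t=0}\!\!\bigl(\exp(t\xi)h\bigr)\cdot \rho_0 \;=\; \xi \cdot \rho, \qquad \rho \eqdef h\cdot \rho_0.
\end{equation*}
Since $dR_{h^{-1}}$ is a linear isomorphism and $\xi \mapsto \xi \cdot \rho$ is surjective by hypothesis, $d\pi_0(h)$ is surjective. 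Moreover, the vertical space $V_h \eqdef \ker d\pi_0(h)$ is mapped by $dR_{h^{-1}}$ onto the kernel $K_\rho \eqdef \{\eta \in T_{\Id}H : \eta\cdot \rho = 0\}$ of the infinitesimal action.

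Next I would identify the horizontal space. Because $G(h)(X_h,Y_h)=g(\rho)(dR_{h^{-1}}X_h,dR_{h^{-1}}Y_h)$ by definition, a vector $X_h$ is $G(h)$-orthogonal to $V_h$ if and only if $\xi = dR_{h^{-1}}X_h$ is $g(\rho)$-orthogonal to $K_\rho$. For such a horizontal $X_h$, the preimage of $\xi\cdot\rho$ under the infinitesimal action is the affine subspace $\xi + K_\rho$, and minimizing the quadratic form $g(\rho)(\cdot,\cdot)$ over this affine subspace is a standard orthogonal projection: the minimum is attained at the unique point orthogonal to $K_\rho$, which by construction is $\xi$ itself. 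Therefore
\begin{equation*}
\bar g(\rho)\bigl(d\pi_0(h)X_h,d\pi_0(h)X_h\bigr) \;=\; g(\rho)(\xi,\xi) \;=\; G(h)(X_h,X_h),
\end{equation*}
proving both that $d\pi_0(h)$ is an isometry on the horizontal space and that the minimizer in \eqref{HLift} is precisely the right-trivialization of the horizontal lift of $X_\rho$.

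The only delicate point is the orthogonal projection step defining $\bar g$: one must know that the minimum in \eqref{HLift} is actually attained and that the minimizer is the projection onto $K_\rho^{\perp}$. In the finite-dimensional setting assumed in the proposition this is immediate from the Pythagorean identity, so there is no genuine obstacle; in the infinite-dimensional applications of interest in this paper, the same argument applies once $K_\rho$ is seen to be closed in $T_{\Id}H$ for the inner product $g(\rho)$, which is the content that must be checked in each concrete example.
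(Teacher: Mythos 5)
Your proof is correct. Note that the paper itself offers no proof of this proposition: it is quoted as the ``Claim of Section 29.21'' of Michor's book, and only the subsequent remark (``the minimization in \eqref{HLift} is taken on an affine space of direction the vertical space whereas the minimizer is an element of the horizontal space'') and the computation \eqref{EqCoreIdentity} hint at the argument. Your verification is exactly the standard one that the citation points to: right-trivialize, observe $d\pi_0(h)X_h=\xi\cdot\rho$ with $\xi=dR_{h^{-1}}X_h$, identify the vertical space with $K_\rho=\ker(\xi\mapsto\xi\cdot\rho)$, and recognize the minimization in \eqref{HLift} as orthogonal projection onto $K_\rho^\perp$, so that on horizontal vectors the minimizer is $\xi$ itself and the isometry is immediate. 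Two very minor points you could make explicit: (i) $\overline{g}$ as defined is only a quadratic form, and one should note that the map $X_\rho\mapsto\xi_{X_\rho}$ (the inverse of the linear bijection $K_\rho^\perp\to T_\rho M$) is linear, so $\overline{g}$ polarizes to a genuine inner product; (ii) well-definedness of $\overline{g}$ independently of the choice of $h$ with $h\cdot\rho_0=\rho$ holds by construction since \eqref{HLift} refers only to $\rho$. Your closing caveat about attainment of the minimum and closedness of $K_\rho$ in the infinite-dimensional applications is apt and is precisely why the paper qualifies the subsequent application as ``formal.''
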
 
The formal application of this construction in our infinite dimensional situation leads to the result, stated in \cite{GeneralizedOT2}:
\begin{proposition}\label{Th:RiemannianSubmersion}
Let $\rho_0 \in \Dens(M)$ and define the map 
\begin{align*}
&\pi_0:\on{Aut}(\mathcal{C}(M)) \to \Dens(M) \\
&\pi_0(\varphi, \lambda) = \varphi_* (\lambda^2 \rho_0)\,.
\end{align*}

Then, the map $\pi_0$ is a Riemannian submersion of the metric $L^2(M,\mathcal{C}(M))$ on the group $\on{Aut}(\mathcal{C}(M))$ to the Wasserstein-Fisher-Rao on the space of densities $\Dens(M)$.

The horizontal space and vertical space at $(\varphi,\lambda) \in \on{Aut}(\mathcal{C}(M))= \Diff(M) \ltimes_\Psi \Lambda_{1/2}(M)$
such that $\varphi_*(\lambda^2 \rho_0) = \rho$ are then defined by,
\begin{equation}
\on{Vert}_{(\varphi,\lambda)} = \left\{  \left(v,\alpha \right) \circ (\varphi,\lambda) \, ; \, (v,\alpha) \in \on{Vect}(M) \times C^\infty(M,\R) \text{ s.t. }  \Div(\rho v) = 2\alpha \rho \right\}\,,
\end{equation}
and 
\begin{equation}
\on{Hor}_{(\varphi,\lambda)} = \left\{  \left(\frac{1}{2}\nabla p,p\right) \circ (\varphi,\lambda) \, ; \, p \in C^\infty(M,\R) \right\}\,.
\end{equation}

\end{proposition}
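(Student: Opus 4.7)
The plan is to apply the general construction of Proposition \ref{Th:GeneralSubmersion} to the left action of $H = \on{Aut}(\mathcal{C}(M)) = \Diff(M)\ltimes_\Psi \Lambda_{1/2}(M)$ on $\Dens(M)$ given by $\pi$. Three things must be verified: (i) the action is transitive with surjective infinitesimal action, (ii) after right-trivialization, the $L^2(M,\mathcal{C}(M))$ metric has the form $g(\rho)(v,\alpha) = \int_M(\|v\|^2+\alpha^2)\,\ud\rho$ required by the proposition, and (iii) the resulting induced metric coincides with $\on{WF}$. Once the submersion is established, the vertical space is read off from the kernel of the infinitesimal action and the horizontal space is obtained by solving an elliptic equation.

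\emph{Transitivity and surjectivity.} Given two smooth positive densities $\rho_0,\rho_1$, choose any $\lambda\in\Lambda_{1/2}(M)$ with $\int_M\lambda^2\rho_0 = \int_M\rho_1$; then $\lambda^2\rho_0$ and $\rho_1$ have equal total mass, so Moser's lemma produces a diffeomorphism $\varphi$ with $\varphi_*(\lambda^2\rho_0)=\rho_1$, proving transitivity. By Example \ref{InfSemi} the infinitesimal action is $(v,\alpha)\cdot\rho = -\Div(v\rho)+2\alpha\rho$; taking $v=0$ and $\alpha = \dot\rho/(2\rho)$ solves this equation for any smooth $\dot\rho$, so surjectivity is immediate.

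\emph{Right-invariant form of the metric.} For a tangent vector $(X_\varphi,X_\lambda)$ at $(\varphi,\lambda)$, the cone metric $g_{\mathcal{C}(M)}=r^2 g + \ud r^2$ evaluated at $(\varphi(x),\lambda(x))$ gives
\begin{equation*}
\|(X_\varphi,X_\lambda)\|^2_{L^2(M,\mathcal{C}(M))} = \int_M \bigl(\lambda(x)^2\, g(\varphi(x))(X_\varphi,X_\varphi) + X_\lambda(x)^2\bigr)\,\ud\rho_0(x).
\end{equation*}
Substituting $X_\varphi = v\circ\varphi$ and $X_\lambda = (\alpha\circ\varphi)\,\lambda$ and pushing forward by $\varphi$ yields
\begin{equation*}
\|(X_\varphi,X_\lambda)\|^2_{L^2(M,\mathcal{C}(M))} = \int_M \bigl(\|v\|^2+\alpha^2\bigr)\,\ud\bigl(\varphi_*(\lambda^2\rho_0)\bigr) = \int_M \bigl(\|v\|^2+\alpha^2\bigr)\,\ud\rho,
\end{equation*}
which is exactly the required right-invariant form with $g(\rho)(v,\alpha)=\int_M(\|v\|^2+\alpha^2)\,\ud\rho$. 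By Proposition \ref{Th:GeneralSubmersion}, the induced quotient metric on $\Dens(M)$ is the infimum of $\int_M(\|v\|^2+\alpha^2)\,\ud\rho$ subject to the continuity constraint $\dot\rho = -\Div(v\rho)+2\alpha\rho$, which is exactly the Wasserstein-Fisher-Rao tensor.

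\emph{Vertical and horizontal spaces.} The vertical space at $(\varphi,\lambda)$ is, by definition, the kernel of $d\pi_0$, i.e.\ the right-translation of the kernel of the infinitesimal action, which is the set of $(v,\alpha)$ with $\Div(\rho v)=2\alpha\rho$; this gives the stated formula. For the horizontal space, a pair of the form $(\tfrac12\nabla p,p)$ is $L^2(\rho)$-orthogonal to every vertical $(w,\beta)$: using $\Div(w\rho)=2\beta\rho$ and integration by parts on the compact boundaryless manifold $M$,
\begin{equation*}
\int_M\Bigl(\tfrac12\nabla p\cdot w + p\,\beta\Bigr)\rho\,\ud\!\on{vol} = -\tfrac12\int_M p\,\Div(w\rho)\,\ud\!\on{vol} + \int_M p\,\beta\,\rho\,\ud\!\on{vol}=0.
\end{equation*}
To show that every horizontal vector has this form, it suffices to show that $p\mapsto -\tfrac12\Div(\rho\nabla p)+2p\rho$ is a bijection onto the space of admissible $\dot\rho$. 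This is a second-order elliptic operator on the compact manifold $M$ with smooth positive weight $\rho$ and strictly positive zeroth-order coefficient $2\rho$, hence it is self-adjoint and coercive on $H^1(M)$; standard elliptic theory delivers the bijectivity, which is the only nonformal step in the argument. Right-translating by $(\varphi,\lambda)$ produces the stated horizontal space at an arbitrary element of the group.
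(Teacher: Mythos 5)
Your proposal is correct and follows essentially the same route as the paper: it applies the general construction of Proposition \ref{Th:GeneralSubmersion}, with the key step being the right-trivialization change of variables showing that the $L^2(M,\mathcal{C}(M))$ metric takes the form $\int_M(\|v\|^2+\alpha^2)\,\ud\rho$ (the paper's identity \eqref{EqCoreIdentity}), and identifies the horizontal space via the elliptic equation of Proposition \ref{HorizontalLiftFormulation}. Your explicit verification of transitivity via Moser's lemma and the direct orthogonality check (in place of the paper's first-order optimality condition) are minor elaborations of the same formal argument.
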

Note that the minimization in $\eqref{HLift}$ is taken on an affine space of direction the vertical space whereas the minimizer is an element of the horizontal space.

Note also that the fibers of the submersion are right-cosets of the subgroup $H_0$ in $H$. 
The proof of the previous proposition is in fact given by the change of variables associated with right-trivialization.
Let $\rho_0$ be a reference density, the application of Proposition \ref{Th:GeneralSubmersion} gives
\begin{align}\label{EqCoreIdentity}
G(\varphi,\lambda)\!\left((X_\varphi,X_\lambda) , \!(X_\varphi,X_\lambda)\right)&\!= \!\int_{M} \!g(v,v) \rho \ud x + \int_{M}   \alpha^2  \rho \ud x \\
&\!=  \! \int_{M}\!  g( X_\varphi \circ \varphi^{-1},X_\varphi \circ \varphi^{-1}) \varphi_* (\lambda^2 \rho_0)\! \ud x + \! \int_{M}\! \!   (X_\lambda \lambda^{-1})^2 \circ \varphi^{-1}  \varphi_* (\lambda^2 \rho_0)\! \ud x\,\\
&\!= \!\int_{M} \!g(X_\varphi, X_\varphi) \lambda^2 \rho_0 \ud x +   \int_{M}  X_\lambda^2 \, \rho_0 \ud x\,.
\end{align}
Therefore, the metric $G$ is the $L^2(M,\mathcal{C}(M))$ metric with respect to the density $\rho_0$. 
Moreover, in this particular situation, the horizontal lift is a minimizer of \eqref{HLift}.  
\begin{proposition}[Horizontal lift]\label{HorizontalLiftFormulation}
Let $\rho \in \Dens^s(\Omega)$ be a smooth density and $X_\rho \in H^s(\Omega,\R)$ 
be a tangent vector at the density $\rho$.
The horizontal lift at $(\Id, 1)$ of $X_\rho$ is given by $(\frac12\nabla \Phi,\Phi)$ where $\Phi$ is the solution to the elliptic partial differential equation:
\begin{equation}\label{EllipticEquation}
-\frac12\Div(\rho  \nabla \Phi)  + 2\Phi \rho = X_\rho\,.
\end{equation}
By elliptic regularity, the unique solution $\Phi$ belongs to $H^{s+1}(M)$.
\end{proposition}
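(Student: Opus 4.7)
The plan is to combine the description of the horizontal space at $(\Id,1)$ supplied by Proposition \ref{Th:RiemannianSubmersion} with the formula for the infinitesimal action from Example \ref{InfSemi}, and then to invoke standard elliptic theory to obtain existence, uniqueness and regularity of $\Phi$.

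First, I would observe that by Proposition \ref{Th:RiemannianSubmersion} every horizontal vector at $(\Id,1)$ has the form $(\tfrac12 \nabla p, p)$ for some scalar function $p$, so the task reduces to identifying the correct $p = \Phi$. Taking $\rho$ as the reference density in the submersion, the defining property of the horizontal lift is that its infinitesimal action on $\rho$ equals $X_\rho$. By Example \ref{InfSemi}, the infinitesimal action of $(v,\alpha) = (\tfrac12 \nabla \Phi, \Phi)$ on $\rho$ is $-\Div(v\rho) + 2\alpha \rho$; substituting yields exactly
$$-\tfrac12 \Div(\rho \nabla \Phi) + 2\Phi \rho = X_\rho\,,$$
so the statement reduces to the well-posedness of this elliptic equation.

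Next, I would establish existence and uniqueness via the Lax--Milgram lemma applied on $H^1(M)$ to the symmetric bilinear form
$$a(\Phi,\Psi) \eqdef \tfrac12 \int_M \rho\, g(\nabla \Phi,\nabla \Psi) \,\ud \on{vol} + 2\int_M \rho\, \Phi\Psi \,\ud \on{vol}\,.$$
This form is continuous on $H^1(M)$, and since $\rho$ is smooth and strictly positive on the compact boundaryless manifold $M$, a uniform lower bound $\rho \geq c > 0$ gives the coercivity estimate $a(\Phi,\Phi) \geq c\,\|\Phi\|_{H^1(M)}^2$. Integration by parts identifies $a(\Phi,\Psi) = \langle L\Phi, \Psi \rangle_{L^2}$ with $L\Phi \eqdef -\tfrac12 \Div(\rho \nabla \Phi) + 2\rho \Phi$. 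Hence for every $X_\rho \in H^{-1}(M)$, and \textit{a fortiori} for $X_\rho \in H^s(M)$, there is a unique weak solution $\Phi \in H^1(M)$.

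Finally, the Sobolev regularity follows from classical elliptic regularity: $L$ is strictly elliptic of order two with smooth coefficients on a compact closed manifold, so a bootstrap argument promotes any weak solution with $L\Phi \in H^s(M)$ to $\Phi \in H^{s+2}(M)$, which in particular gives the claimed $\Phi \in H^{s+1}(M)$. Since every ingredient is classical and the only step that requires mild care is the verification of coercivity (immediate from positivity of $\rho$ and compactness of $M$), I do not foresee any genuine obstacle.
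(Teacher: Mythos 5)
Your argument is correct and takes essentially the same route as the paper: the paper obtains \eqref{EllipticEquation} as the first-order optimality condition of the minimization \eqref{HLift} (equivalently, as you do, by combining the horizontal space of Proposition \ref{Th:RiemannianSubmersion} with the infinitesimal action of Example \ref{InfSemi}), and your Lax--Milgram and coercivity discussion merely supplies the well-posedness details the paper leaves implicit. One small caveat: the paper asserts only $\Phi \in H^{s+1}(M)$ rather than $H^{s+2}(M)$ because $\rho$ is taken of Sobolev class $H^s$, so the coefficient $\nabla\rho$ appearing in $\Div(\rho\nabla\Phi)$ limits the regularity gain to one derivative; your stronger conclusion would require $\rho$ to be genuinely $C^\infty$.
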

To prove Proposition \ref{HorizontalLiftFormulation}, remark that equation \eqref{EllipticEquation} is the first order condition of the minimization problem \eqref{HLift} where the term $X_{\rho} $ reads in this case $X_{\rho}=\xi \cdot \rho = (v,\alpha) \cdot \rho = - \Div(\rho v) + 2\alpha \rho$.

\par
A direct application of this Riemannian submersion viewpoint is the formal computation of the sectional curvature of the Wasserstein-Fisher-Rao in this smooth setting by applying O'Neill's formula see \cite{GallotHulinLafontaine}. To recall it hereafter, we need the Lie bracket of right-invariant vector fields on $\Diff(M) \ltimes_\Psi \Lambda_{1/2}(M)$.
\begin{proposition}
Let $(v_1,\alpha_1)$ and $(v_2,\alpha_2)$ be two tangent vectors at identity in $\Diff(M) \ltimes_\Psi \Lambda_{1/2}(M)$. Then,
 \begin{equation}
[(v_1,\alpha_1),(v_2,\alpha_2)] = \left( [v_1,v_2] , \nabla \alpha_1\cdot v_2 - \nabla \alpha_2\cdot v_1 \right)\,,
\end{equation}
where $ [v_1,v_2]$ denotes the Lie bracket of vector fields.
 \end{proposition}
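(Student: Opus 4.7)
The bracket is a direct consequence of the semidirect product structure. Since $\Lambda_{1/2}(M)$ is abelian under pointwise multiplication, the only nontrivial contributions come from the Lie bracket on $\on{Vect}(M)$ and the derived action of $\Diff(M)$ on the gauge Lie algebra $C^\infty(M,\R)$. My plan is to compute $\Ad$ explicitly from the group law already given, then differentiate once more to read off $\mathrm{ad}$.

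\textbf{Adjoint computation.} Using the composition law $(\varphi_1,\lambda_1)(\varphi_2,\lambda_2) = (\varphi_1\circ\varphi_2,\,(\lambda_1\circ\varphi_2)\lambda_2)$ and the inverse $(\varphi,\lambda)^{-1} = (\varphi^{-1},\,\lambda^{-1}\circ\varphi^{-1})$, pick a curve $(\varphi_2(s),\lambda_2(s))$ through $(\Id,1)$ with tangent $(v_2,\alpha_2)$ and form the conjugate $(\varphi,\lambda)(\varphi_2(s),\lambda_2(s))(\varphi,\lambda)^{-1}$. Differentiating at $s=0$, the first slot gives the standard $\varphi_*v_2$; in the second slot, an elementary application of the product rule on three factors (two of which equal $1$ at $s=0$) yields
\[
\Ad_{(\varphi,\lambda)}(v_2,\alpha_2) = \bigl(\varphi_* v_2,\ \bigl(\alpha_2 + v_2(\log\lambda)\bigr)\circ\varphi^{-1}\bigr),
\]
where the extra term $v_2(\log\lambda) = \nabla(\log\lambda)\cdot v_2$ encodes the interaction between the gauge multiplier $\lambda$ on the left and the infinitesimal displacement of $\varphi_2(s)$ in direction $v_2$.

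\textbf{From $\Ad$ to the bracket.} Differentiate $\Ad_{(\varphi_1(t),\lambda_1(t))}(v_2,\alpha_2)$ at $t=0$ along a curve with tangent $(v_1,\alpha_1)$ and $\lambda_1(0)=1$. For the first slot, $\frac{d}{dt}\big|_0 \varphi_1(t)_* v_2$ equals $[v_1,v_2]$ in the right-invariant convention used throughout the paper (the one for which the bracket of right-invariant vector fields on $\Diff(M)$ agrees with the Lie bracket of vector fields). For the second slot, only two contributions survive because $\log\lambda_1(0)=0$: differentiating $v_2(\log\lambda_1(t))$ in $t$ produces $\nabla\alpha_1\cdot v_2$ (using $\partial_t\log\lambda_1|_0=\alpha_1$), while differentiating $\alpha_2\circ\varphi_1(t)^{-1}$ in $t$ produces $-\nabla\alpha_2\cdot v_1$ (using $\frac{d}{dt}\big|_0\varphi_1(t)^{-1}(x)=-v_1(x)$). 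Summing the two terms gives precisely the announced formula.

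The only real obstacle is sign bookkeeping: one must consistently match the paper's convention for the Lie bracket on $\on{Vect}(M)$ with the Lie-derivative identities $\frac{d}{dt}\big|_0(\varphi_t)_* X = \pm[v,X]$ and $\frac{d}{dt}\big|_0 \varphi_t^{-1}(x) = -v(x)$. Modulo this bookkeeping, the proof is just a double application of the chain and product rules to the explicit group law.
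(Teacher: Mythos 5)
Your derivation is correct, and since the paper states this proposition without any proof, there is nothing to compare it against; your route (read off $\Ad$ from the explicit group law, then differentiate to get $\ad$) is the natural one and the computation checks out. The $\Ad$ formula $\Ad_{(\varphi,\lambda)}(v_2,\alpha_2) = \bigl(\varphi_* v_2, (\alpha_2 + v_2(\log\lambda))\circ\varphi^{-1}\bigr)$ is what the group law gives, and the second-slot differentiation correctly produces $\nabla\alpha_1\cdot v_2 - \nabla\alpha_2\cdot v_1$ because $\log\lambda_1(0)=0$ kills the cross term.

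One clarification on the sign bookkeeping you flag: the conjugation computation gives, in the first slot, $\tfrac{d}{dt}\big|_0\,\varphi_1(t)_* v_2 = \nabla_{v_2}v_1 - \nabla_{v_1}v_2$, i.e.\ \emph{minus} the Jacobi--Lie bracket. So the statement as printed is exactly $\ad$ computed by conjugation, provided one reads $[v_1,v_2]$ with Arnold's sign convention for $\on{Vect}(M)$ (the one for which $\tfrac{d}{dt}\big|_0\varphi_{t*}X = [v,X]$); this is a single global convention applying to both slots simultaneously. Your phrasing suggests applying a ``right-invariant'' sign flip to the first slot only, which would be inconsistent: passing from $\ad$ to the bracket of right-invariant vector fields negates \emph{both} components, yielding $([v_1,v_2]_{\mathrm{JL}},\ \nabla\alpha_2\cdot v_1 - \nabla\alpha_1\cdot v_2)$ instead. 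Since the paper only uses this bracket through $\|[\xi_1,\xi_2]^V\|^2$ in O'Neill's formula and through horizontal fields where the second slot vanishes, the overall sign is immaterial there, but you should state the convention once and apply it uniformly rather than slot by slot.
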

Note that the application of this formula to horizontal vector fields gives $[(\frac 12 \nabla \Phi_1,\Phi_1),(\frac 12 \nabla \Phi_2,\Phi_2)]
 = (\frac14 [\nabla \Phi_1,\nabla \Phi_2],0)$.

\begin{proposition}\label{prop-curvature}
Let $\rho$ be a smooth positive density on $M$ and $X_1, X_2$ be two orthonormal tangent vectors at $\rho$ and $\xi_{\Phi_1},\xi_{\Phi_2}$ be their corresponding right-invariant horizontal lifts on the group.
If O'Neill's formula can be applied, the sectional curvature of $\Dens(M)$ at point $\rho$ is given by, 
\begin{equation}\label{SectionalCurvatureONeill}
K(\rho)(X_1,X_2) = \int_\Omega k(x,1)(\xi_1(x),\xi_2(x)) w(\xi_1(x),\xi_2(x)) \rho(x) \ud \nu(x) + \frac 34 \left\| [\xi_1,\xi_2]^V \right\|^2
\end{equation}
where  
$$w(\xi_1(x),\xi_2(x)) = g_{\mathcal{C}(M)}(x,1)(\xi_1(x),\xi_1(x))g_{\mathcal{C}(M)}(x)(\xi_2(x),\xi_2(x))-\left(g_{\mathcal{C}(M)}(x,1)(\xi_1(x),\xi_2(x))\right)^2$$ and $ [\xi_{\Phi_1},\xi_{\Phi_2}]^V$ denotes the vertical projection of $[\xi_{\Phi_1},\xi_{\Phi_2}]$ at identity, $\| \cdot \|$ denotes the norm at identity and $k(x,1)$ is the sectional curvature of the cone at point $(x,1)$ in the directions $(\xi_1(x),\xi_2(x))$.
\end{proposition}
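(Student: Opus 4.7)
The plan is to apply O'Neill's formula to the Riemannian submersion $\pi_0:\on{Aut}(\mathcal{C}(M))\to\Dens(M)$ of Proposition~\ref{Th:RiemannianSubmersion}. O'Neill's formula for Riemannian submersions reads
$$K_{\Dens(M)}(\rho)(X_1,X_2) \;=\; K_{\on{Aut}(\mathcal{C}(M))}(\xi_{\Phi_1},\xi_{\Phi_2}) \;+\; \tfrac{3}{4}\,\bigl\|[\xi_{\Phi_1},\xi_{\Phi_2}]^V\bigr\|^2,$$
where the $\xi_{\Phi_i}=(\tfrac12\nabla\Phi_i,\Phi_i)$ are the horizontal lifts furnished by Proposition~\ref{HorizontalLiftFormulation}. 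The second summand is already present in \eqref{SectionalCurvatureONeill}, and the Lie-bracket formula stated just above the proposition, together with the explicit formula $[(\tfrac12\nabla\Phi_1,\Phi_1),(\tfrac12\nabla\Phi_2,\Phi_2)]=(\tfrac14[\nabla\Phi_1,\nabla\Phi_2],0)$, lets one identify what is being projected. The whole task therefore reduces to identifying the intrinsic sectional curvature of the total space with the first integral in \eqref{SectionalCurvatureONeill}.

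To evaluate $K_{\on{Aut}(\mathcal{C}(M))}$ I would invoke Proposition~\ref{Rem:TotallyGeodesic}: the subgroup $\on{Aut}(\mathcal{C}(M))$ is totally geodesic inside the ambient group of all maps, so by Gauss' equation its intrinsic sectional curvature equals the ambient one on pairs of horizontal tangent vectors. The ambient metric is of the $L^2(M,\mathcal{C}(M))$ type, and for such mapping spaces the sectional curvature at a section $(\varphi,\lambda)$ on a pair of tangent fields $Y_1,Y_2$ is pointwise, equal to the $L^2$-integral of the sectional curvature of the target computed at $(\varphi(x),\lambda(x))$, weighted by the pointwise ``area'' $w(Y_1(x),Y_2(x))$; this is Freed's formula for $L^2(M,N)$ in \cite{freed1989}. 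Specializing to $(\varphi,\lambda)=(\Id,1)$, target $\mathcal{C}(M)$, and the horizontal lifts $\xi_{\Phi_i}$, and invoking the right-trivialization identity \eqref{EqCoreIdentity}, which shows that the reference measure weighting the $L^2$ metric at the basepoint is exactly $\rho\,\mathrm{d}\nu$ when one computes the submersion starting at $\rho_0 = \rho$, produces precisely the integrand $k(x,1)(\xi_1(x),\xi_2(x))\,w(\xi_1(x),\xi_2(x))\,\rho(x)$.

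Combining the two contributions yields the stated formula. The remaining technical point — and the main obstacle — is the functional-analytic justification of O'Neill's formula in this weak-Riemannian, infinite-dimensional context: one must control the existence and smoothness of the Levi-Civita connection for the weak $L^2(M,\mathcal{C}(M))$ metric on $\on{Aut}(\mathcal{C}(M))$, ensure that the horizontal distribution of Proposition~\ref{Th:RiemannianSubmersion} splits the tangent bundle smoothly, and check that the horizontal lift operation commutes appropriately with the bracket and connection. These are exactly the issues that the conditional clause ``\emph{if O'Neill's formula can be applied}'' in the statement sidesteps. A secondary technical check is that the totally-geodesic embedding of Proposition~\ref{Rem:TotallyGeodesic} is compatible with the particular reference measure chosen on $M$ for $L^2(M,\mathcal{C}(M))$; this is exactly what is asserted by the last sentence of that proposition, which frees us from any specific choice of measure on $\mathcal{C}(M)$.
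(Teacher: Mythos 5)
Your proposal is correct and follows the route the paper itself intends: the paper gives no written proof of Proposition \ref{prop-curvature}, presenting it as a formal application of O'Neill's formula combined with the Freed--Groisser pointwise curvature formula for $L^2(M,N)$ (the same ingredient it invokes explicitly in the later Gauss--Codazzi proposition), and your decomposition into the $\frac34\left\|[\xi_1,\xi_2]^V\right\|^2$ term plus the cone curvature integrated against $\rho\ud\nu$ reproduces exactly that. One small correction: the appeal to Proposition \ref{Rem:TotallyGeodesic} is misplaced, since that statement concerns $\on{Aut}(\mathcal{C}(M))\subset\Diff(\mathcal{C}(M))$ with the $L^2(\mathcal{C}(M),\mathcal{C}(M))$ metric, whereas the total space of the submersion carries the $L^2(M,\mathcal{C}(M))$ metric and sits as an open subset of the mapping space $M\to\mathcal{C}(M)$, so Freed's pointwise formula applies directly and no Gauss equation or totally geodesic argument is needed.
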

This computation is only formal and we will not attempt here to give a rigorous meaning to this formula similarly to what has been done in \cite{lott2008some} for the $L^2$ Wasserstein metric. Yet, it has interesting consequences: the curvature of the space of densities endowed with the $\on{WF}$ metric is always greater or equal than the curvature of the cone $\mathcal{C}(M)$. In particular, it is non-negative if the curvature of $(M,g)$ is bigger than $1$, as a consequence of Proposition \ref{Th:ConeConnection}.


\section{The $H^{\Div}$ right-invariant metric on the diffeomorphism group}
\label{Sec:EulerArnold}

In this section, we summarize known results on the $H^{\Div}$ right-invariant metric on the diffeomorphism group. 
We now define the $H^{\Div}$ right-invariant metric.
\begin{definition}\label{DefinitionHdiv}
Let $(M,g)$ be a Riemannian manifold and $\on{Diff}^s(M)$ be the group of diffeomorphisms which belong to $H^s(M)$ for $s > d/2 + 1$. The right-invariant $H^{\Div}$ metric, implicitely dependent on two positive real parameters $a,b$, is defined by
\begin{equation}\label{Eq:Metric}
G_\varphi(X_\varphi,X_\varphi) = \int_M a^2| X_\varphi \circ \varphi^{-1} |^2 + b^2\Div(X_\varphi \circ \varphi^{-1})^2 \ud \! \on{vol} \,.
\end{equation}
\end{definition}
The Euler-Arnold equation in one dimension (that is on the circle $S^1$ for instance) is the well-known Camassa-Holm equation (actually when $a=b=1$):
\begin{equation} \label{Eq:CH}
a^2\partial_{t} u - b^2\partial_{txx} u  +3 a^2\partial_{x} u \,u  - 2b^2\partial_{xx} u \,\partial_x u - b^2\partial_{xxx} u \,u = 0\,.
\end{equation}
On a general Riemannian manifold $(M,g)$, the equation can be written as, with $n = a^2 u^\flat  + b^2 \ud \delta u^\flat$,
\begin{equation}\label{Eq:HdivGeometricForm}
\partial_t n + a^2\left(\Div(u) u^\flat +  \ud \langle u , u \rangle + \iota_u \ud u^\flat\right)  + b^2\left( \Div(u) \ud \delta u^\flat + \ud \iota_u \ud \delta u^\flat\right) = 0\,,
\end{equation}
where the notation $\flat$ corresponds to lowering the indices. More precisely, if $u \in \chi(M)$ then $u^\flat$ is the $1$-form defined by $v \mapsto g(u,v)$. The notation $\delta$ is the formal adjoint to the exterior derivative $\ud$ and $\iota$ is the insertion of vector fields which applies to forms.

\par \textbf{On the well-posedness of the initial value problem.} Although the theorem below is not stated in this particular form in \cite{Ebin1970}, this result can be seen as a byproduct of their results as explained in \cite[Theorem 4.1]{MisolekPreston2010}. For similar smoothness results in the case of smooth diffeomorphisms, we refer the reader to \cite[Theorem 3]{Constantin2003}.
\begin{theorem}\label{Th:EbinMarsden}
 On $\on{Diff}^s(S_1)$ for $s \geq 2$ integer, the $H^{1}$ right-invariant metric is a smooth and weak Riemannian metric. Moreover, if $s\geq 3$, the exponential map is a smooth local diffeomorphism on $T\on{Diff}^s(S_1)$.
\end{theorem}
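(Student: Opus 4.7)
The plan is to follow the standard Ebin--Marsden strategy, with the small simplification that on $S_1$ the divergence is just $\partial_x$. For the first assertion, I would rewrite the squared norm in \eqref{Eq:Metric} by the change of variable $y = \varphi^{-1}(x)$:
\begin{equation*}
G_\varphi(X_\varphi,X_\varphi) = \int_{S_1} \left( a^2 \frac{|X_\varphi(y)|^2}{\varphi'(y)} \cdot \varphi'(y) \cdot \frac{1}{\varphi'(y)} \varphi'(y) + b^2 \frac{|X_\varphi'(y)|^2}{\varphi'(y)} \right) \ud y,
\end{equation*}
i.e.\ $G_\varphi(X_\varphi,X_\varphi) = \int_{S_1} ( a^2 |X_\varphi|^2 \varphi' + b^2 |X_\varphi'|^2 / \varphi' ) \ud y$. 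For $s \geq 2$, $\varphi' \in H^{s-1}(S_1)$ embeds into $C^0$ and is bounded away from $0$ since $\varphi \in \Diff^s$; hence $\varphi \mapsto \varphi'$ and $\varphi \mapsto 1/\varphi'$ are smooth maps into $H^{s-1}(S_1)$, and multiplication is continuous in this range. This makes $\varphi \mapsto G_\varphi$ a smooth section of the bundle of bilinear forms on $T\Diff^s(S_1)$. It is a weak metric because the topology induced by the $H^1$ inner product is strictly weaker than the ambient $H^s$ topology.

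For the second assertion I would derive the geodesic spray in Lagrangian form. Write $u = \dot\varphi \circ \varphi^{-1}$ so that the Euler--Arnold equation \eqref{Eq:CH} reads $\partial_t (Au) = -\ad_u^*(Au)$ with $A = a^2 - b^2 \partial_{xx}$. Composing with $\varphi$ and using $\partial_t(\dot\varphi\circ\varphi^{-1})\circ\varphi = \ddot\varphi - (\nabla_u u)\circ\varphi$ transforms the equation into
\begin{equation*}
\ddot\varphi = S(\varphi,\dot\varphi),
\end{equation*}
where the spray $S$ is explicitly
\begin{equation*}
S(\varphi,\dot\varphi) = \bigl( (\nabla_u u) + A^{-1} \circ \operatorname{RHS}(u) \bigr) \circ \varphi, \qquad u = \dot\varphi\circ\varphi^{-1},
\end{equation*}
with $\operatorname{RHS}(u)$ the first-order terms produced by $\ad_u^*$.

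The crucial step is to show that $S: T\Diff^s(S_1) \to T\Diff^s(S_1)$ is a smooth vector field; this is the no-loss-of-derivatives property of Ebin--Marsden type metrics. The operator $A^{-1}$ is a smoothing of order $2$, and the adjoint term $\ad_u^* v = A^{-1}(2 (Av)\, u' + (Av)'\, u)$ loses only one derivative on $u$; thus $A^{-1}\operatorname{RHS}(u)$ belongs to $H^s$ when $u \in H^s$, with smooth dependence. The composition with $\varphi$ and with $\varphi^{-1}$ is then handled by the classical lemmas that $(\psi,\varphi) \mapsto \psi\circ\varphi$ is smooth from $H^{s+k}\times \Diff^s$ to $H^s$ for $k\geq 0$ and that $\varphi \mapsto \varphi^{-1}$ is smooth on $\Diff^s$ (these are the key technical results of \cite{Ebin1970}). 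The smoothing gained from $A^{-1}$ exactly compensates the derivative loss from composition, which is what requires $s \geq 3$ (one extra degree over $s \geq 2$ because the spray involves $\ddot\varphi$ and hence one more derivative).

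Once $S$ is smooth, the Cauchy--Lipschitz theorem in the Hilbert manifold $T\Diff^s(S_1)$ yields a smooth flow, and the exponential map $\exp(v) := \varphi(1)$ is defined and smooth on a neighborhood of $0$ in $T_{\Id}\Diff^s(S_1)$. Its derivative at $0$ is the identity, so the inverse function theorem in Banach spaces gives that $\exp$ is a smooth local diffeomorphism on $T\Diff^s(S_1)$. The main obstacle is precisely the no-loss-of-derivatives estimate for the spray; this is where the specific algebraic structure of the $H^{\Div}$ metric (here purely $H^1$ on $S_1$) enters in an essential way, and where the proof diverges from generic right-invariant metrics on $\Diff^s$.
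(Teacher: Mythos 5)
First, a point of reference: the paper does not prove this theorem at all — it is quoted as a byproduct of \cite{Ebin1970} in the form given in \cite[Theorem 4.1]{MisolekPreston2010} (see also \cite{Constantin2003}) — so your proposal is really being measured against the proofs in those references, whose overall architecture (smoothness of the metric, then smoothness of the Lagrangian spray, then Cauchy--Lipschitz and the inverse function theorem) you have correctly reproduced. Your first assertion is fine: the intermediate change-of-variables display is garbled, but the final expression $\int_{S_1}\bigl(a^2|X_\varphi|^2\varphi'+b^2|X_\varphi'|^2/\varphi'\bigr)\ud y$ is correct, and smoothness of $\varphi\mapsto 1/\varphi'$ into the Banach algebra $H^{s-1}(S_1)$ for $s\geq2$ gives smoothness of the metric; weakness is as you say.

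The second assertion, however, has a genuine gap precisely at the step you identify as crucial. You justify smoothness of the spray by invoking ``classical lemmas'' that $(\psi,\varphi)\mapsto\psi\circ\varphi$ is smooth from $H^{s+k}\times\Diff^s$ to $H^s$ for $k\geq0$ and that $\varphi\mapsto\varphi^{-1}$ is smooth on $\Diff^s$. Neither is true: composition is only continuous from $H^s\times\Diff^s$ to $H^s$ (it is $C^k$ only when the left factor lives in $H^{s+k}$), and inversion on $\Diff^s$ is merely continuous, not even $C^1$. If those maps were smooth, right-trivialization would be smooth and the spray of \emph{every} right-invariant metric on $\Diff^s$ would be smooth — contradicting your own (correct) closing remark that the argument must exploit the specific structure of the metric. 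The actual content of the Ebin--Marsden/Misiolek--Preston argument is to never form $u=\dot\varphi\circ\varphi^{-1}$ as an intermediate object: one conjugates every operator by $\varphi$, so that e.g. $\bigl((w\circ\varphi^{-1})_x\bigr)\circ\varphi=w_x/\varphi'$ and the spray becomes a polynomial expression in $\dot\varphi$, $\varphi'$, $1/\varphi'$ together with the conjugated inverse $w\mapsto\bigl(A^{-1}(w\circ\varphi^{-1})\bigr)\circ\varphi$; the real technical lemma is that this conjugated operator depends smoothly on $\varphi\in\Diff^s$, and that is where the gain of two derivatives from $A^{-1}$ against the at-most-one-derivative loss in the quadratic term $a^2u^2+\tfrac{b^2}{2}u_x^2$ is actually cashed in. As written, ``the smoothing from $A^{-1}$ exactly compensates the derivative loss from composition'' is an assertion rather than a proof, and your explanation of the threshold $s\geq3$ (``because the spray involves $\ddot\varphi$'') does not identify where $s=2$ fails; the threshold must come out of the mapping properties of these conjugated operators, not from counting time derivatives.
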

Global well-posedness does not hold in one dimension since there exist smooth initial conditions for the Camassa-Holm equation such that the solutions blow up in finite time.

In higher dimensions, the initial value problem has been studied by Michor and Mumford \cite[Theorem 3]{MumfordMichorHdiv}. This is not a direct result of \cite{Ebin1970} since the differential operator associated to the metric is not elliptic. They prove that the initial value problem on the space of vector fields is locally well posed for initial data in a Sobolev space of high enough order. Although the proof could probably be adapted to the case of a Riemannian manifold, in that case, the result of local well posedness is not proven yet. 

\par \textbf{On the metric properties of the  $H^{\Div}$ right-invariant metric.}
Michor and Mumford already had the following non-degeneracy result in \cite{Michor2005}.
\begin{theorem}[Michor and Mumford]\label{Th:MM}
The distance on $\on{Diff}(M)$ induced by the $H^{\Div}$ right-invariant metric is non-degenerate. Namely, between two distinct diffeomorphisms the infimum of the lengths of the paths joining them is strictly positive.
\end{theorem}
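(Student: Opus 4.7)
The plan is to construct, for each measurable $A \subset M$, a real-valued functional on $\Diff(M)$ that is Lipschitz with respect to the $H^{\Div}$ length, and then to check that these functionals collectively separate distinct diffeomorphisms. The natural choice, adapted to the divergence penalty in \eqref{Eq:Metric}, is
\begin{equation*}
F_A(\varphi) \eqdef \sqrt{\on{vol}(\varphi(A))}.
\end{equation*}

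The first step is to show $F_A$ is $\tfrac{1}{2b}$-Lipschitz. Along a smooth path $\varphi_t$ with Eulerian velocity $v_t = \dot\varphi_t \circ \varphi_t^{-1}$, the identity $\partial_t J_{\varphi_t} = ((\Div v_t)\circ\varphi_t)\,J_{\varphi_t}$ together with the change of variable $y = \varphi_t(x)$ gives
\begin{equation*}
\frac{d}{dt}\on{vol}(\varphi_t(A)) = \int_{\varphi_t(A)} \Div v_t \,\ud\!\on{vol}.
\end{equation*}
Cauchy--Schwarz bounds the right-hand side by $\sqrt{\on{vol}(\varphi_t(A))}\,\|\Div v_t\|_{L^2}$, and the inequality $b^2 \|\Div v_t\|_{L^2}^2 \leq G_{\varphi_t}(\dot\varphi_t,\dot\varphi_t)$, read off from Definition \ref{DefinitionHdiv}, yields
\begin{equation*}
\left|\frac{d}{dt}\sqrt{\on{vol}(\varphi_t(A))}\right| \leq \frac{1}{2b}\,G_{\varphi_t}(\dot\varphi_t,\dot\varphi_t)^{1/2}.
\end{equation*}
Integrating in $t$ gives $|F_A(\varphi_1) - F_A(\varphi_0)| \leq L/(2b)$ for any path of $H^{\Div}$-length $L$.

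In the first case, suppose $(\varphi_0)_*\on{vol} \neq (\varphi_1)_*\on{vol}$, equivalently $\psi \eqdef \varphi_0^{-1}\varphi_1$ is not volume-preserving. Then the continuous functions $J_{\varphi_0}$ and $J_{\varphi_1}\circ\psi^{-1}$ disagree on an open set, so there is a small ball $A$ with $\int_A (J_{\varphi_0} - J_{\varphi_1})\,\ud\!\on{vol} \neq 0$, i.e.\ $F_A(\varphi_0) \neq F_A(\varphi_1)$, and the Lipschitz estimate forces $L > 0$.

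The remaining situation, where $\psi \in \on{SDiff}(M)\setminus\{\Id\}$, is the main obstacle: all the $F_A$ assign the value $\sqrt{\on{vol}(A)}$ to both endpoints, so the divergence term is blind and one must extract positivity from the $L^2$ term $\|v_t\|_{L^2}\le a^{-1}G_{\varphi_t}^{1/2}$, even though a length-minimizing path may temporarily leave $\on{SDiff}(M)$. Following Michor and Mumford's strategy, one complements the $F_A$'s with a refined family of Lipschitz functionals sensitive to the displacement of localized mass distributions (for instance smoothed versions of $\varphi\mapsto\int_M f\circ\varphi^{-1}\cdot g\,\ud\!\on{vol}$), whose variation is controllable by $\|v_t\|_{L^2}$ alone once the previous estimate constrains the intermediate pushforward volumes to stay near $\!\on{vol}$. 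Assembling these constraints produces, for any $\psi \neq \Id$, a functional on which $\psi$ and $\Id$ differ, and right-invariance $d_{H^{\Div}}(\varphi_0,\varphi_1) = d_{H^{\Div}}(\Id,\psi)$ closes the argument.
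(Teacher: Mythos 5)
Your first step is correct and cleanly executed: $F_A(\varphi)=\sqrt{\on{vol}(\varphi(A))}$ is $\tfrac{1}{2b}$-Lipschitz for the $H^{\Div}$ length, and this separates $\varphi_0$ from $\varphi_1$ whenever $(\varphi_0)_*\on{vol}\neq(\varphi_1)_*\on{vol}$. But the second case, $\psi\eqdef\varphi_0^{-1}\varphi_1\in\on{SDiff}(M)\setminus\{\Id\}$, is the actual content of the theorem, and there you do not give a proof: you name a family of functionals and assert that their variation is ``controllable by $\|v_t\|_{L^2}$ alone once the intermediate pushforward volumes stay near $\on{vol}$''. That assertion is not correct as stated. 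Differentiating $\varphi_t\mapsto\int_M (f\circ\varphi_t^{-1})\,g\,\ud\!\on{vol}$ along a path produces $-\int_M\langle\nabla(f\circ\varphi_t^{-1}),v_t\rangle\,g\,\ud\!\on{vol}$, and $\nabla(f\circ\varphi_t^{-1})$ involves $D\varphi_t^{-1}$, which is controlled neither by $\|v_t\|_{L^2}$ nor by any bound on $(\varphi_t)_*\on{vol}$. The step can be repaired: integrating by parts gives $\int_M(f\circ\varphi_t^{-1})\bigl(g\,\Div v_t+\langle\nabla g,v_t\rangle\bigr)\ud\!\on{vol}$, which is bounded by $\|f\|_{\infty}\bigl(b^{-1}\|g\|_{L^2}+a^{-1}\|\nabla g\|_{L^2}\bigr)\,G_{\varphi_t}(\dot\varphi_t,\dot\varphi_t)^{1/2}$, so these functionals are genuinely Lipschitz and in fact already separate \emph{all} pairs of distinct diffeomorphisms, making your case split and the $F_A$'s redundant --- but this repair uses both terms of the metric rather than the $L^2$ term alone, and it is not in your text. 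As written, the argument has a gap exactly at the point where non-degeneracy on volume-preserving diffeomorphisms must be established.

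For comparison, the paper obtains the theorem with no case analysis at all, as a corollary of the isometric embedding $\varphi\mapsto(\varphi,\sqrt{\on{Jac}(\varphi)})$ of $\Diff(M)$ with the $H^{\Div}$ metric into $L^2(M,\mathcal{C}(M))$ (Theorem \ref{Th:IsometricInjection}): the $H^{\Div}$ length of any path equals the $L^2(M,\mathcal{C}(M))$ length of its lift, hence
\begin{equation*}
d_{H^{\Div}}(\varphi_0,\varphi_1)\geq d_{L^2(M,\mathcal{C}(M))}\left( (\varphi_0,\sqrt{\on{Jac}(\varphi_0)}),(\varphi_1,\sqrt{\on{Jac}(\varphi_1)})\right),
\end{equation*}
and the right-hand side vanishes only if $\varphi_0=\varphi_1$.
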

Due to the presence of blow up in the Camassa-Holm equation, metric completeness does not hold since it would imply geodesic completeness, that is global well posedness. However, it is still meaningful to ask whether geodesics are length minimizing for short times. Since the Gauss lemma is valid in a strong $H^s$ topology, this ensures that geodesics are length minimizing among all curves that stay in a $H^s$ neighborhood, see also \cite{Constantin2003}.
However, this is \emph{not} enough to prove that the associated geodesic distance is non degenerate since an almost minimizing geodesic can escape this neighborhood for arbitrarily small energy. This is what happens for the right-invariant metric $H^{1/2}$ on the circle $S_1$ where the metric is degenerate although there exists a smooth exponential map similarly to our case in 1D, see \cite{Escher12}.


\section{A Riemannian submanifold point of view on the $H^{\Div}$ right-invariant metric}\label{Sec:Submanifold}

The starting point of this section is the following simple proposition whose proof is omitted.
\begin{proposition}
Consider a Riemannian submersion constructed as in Proposition \ref{Th:GeneralSubmersion}. Let $H_0$ be the isotropy subgroup of $\rho_0$, then, considering $H_0$ as a Riemannian submanifold of $H$ and denoting $G_{H_0}$ its induced metric, $G_{H_0}$ is a right-invariant metric on $H_0$. 
\end{proposition}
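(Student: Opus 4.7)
The plan is to reduce the statement to a short direct computation that exploits the special form of the metric $G$ in Proposition \ref{Th:GeneralSubmersion}. By hypothesis,
\begin{equation*}
G(h)(X_h,X_h) = g(h\cdot \rho_0)\bigl(dR_{h^{-1}} X_h,\, dR_{h^{-1}} X_h\bigr),
\end{equation*}
so the only way $h$ itself enters (beyond right-trivialization) is through the base point $h\cdot\rho_0$ of the inner product on $T_{\on{Id}}H$. The first observation to make is that when $h\in H_0$ this dependence disappears: since $h\cdot\rho_0=\rho_0$, one has
\begin{equation*}
G(h)(X_h,X_h) = g(\rho_0)\bigl(dR_{h^{-1}} X_h,\, dR_{h^{-1}} X_h\bigr)\qquad \text{for all } h\in H_0,
\end{equation*}
and the induced metric $G_{H_0}$ on the submanifold $H_0$ is simply the restriction of this expression to tangent vectors $X_h\in T_hH_0\subset T_hH$.

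Next I would verify right-invariance by direct calculation. Fix $k\in H_0$, consider the right-translation $R_k\colon H_0\to H_0$, $h\mapsto hk$ (well-defined because $H_0$ is a subgroup), and let $X_h\in T_hH_0$. Using the chain rule together with the identity $R_{(hk)^{-1}}\circ R_k = R_{h^{-1}}$ coming from $k(hk)^{-1}=h^{-1}$, I would compute
\begin{equation*}
dR_{(hk)^{-1}}\circ dR_k(X_h) = dR_{h^{-1}}(X_h).
\end{equation*}
Combining this with the simplified expression above gives
\begin{equation*}
G_{H_0}(hk)\bigl(dR_k X_h, dR_k X_h\bigr) = g(\rho_0)\bigl(dR_{h^{-1}} X_h, dR_{h^{-1}} X_h\bigr) = G_{H_0}(h)(X_h,X_h),
\end{equation*}
which is precisely right-invariance of $G_{H_0}$ under $R_k$. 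Since $k\in H_0$ was arbitrary, this concludes the proof.

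I do not anticipate a serious obstacle here: the argument is essentially a one-line chain-rule manipulation. The only mild subtlety worth stating explicitly is that $H_0$ should genuinely be a Lie subgroup so that $R_k$ restricts to a diffeomorphism of $H_0$ and $T_hH_0$ is stable under $dR_k$; in the infinite dimensional situations of interest in this paper this is a standard fact about isotropy subgroups of smooth group actions and is tacitly built into the statement.
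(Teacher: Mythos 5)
Your proof is correct: the paper omits the proof of this proposition as "simple," and your argument — observing that $h\cdot\rho_0=\rho_0$ on $H_0$ freezes the inner product at $g(\rho_0)$, then using $dR_{(hk)^{-1}}\circ dR_k = dR_{h^{-1}}$ — is exactly the intended computation. The remark about $R_k$ preserving $H_0$ and $dR_k$ stabilizing $T_hH_0$ is the right subtlety to flag.
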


The Riemannian submersion $\pi_0:\on{Aut}(\mathcal{C}(M)) \mapsto \Dens(M)$ defined in Proposition \ref{Th:RiemannianSubmersion} enables to study the equivalent problem to the incompressible Euler equation.
The fiber of the Riemannian submersion at $\on{vol}$ is $\pi_0^{-1}(\{ \on{vol} \})$ and it will be denoted by $\on{Aut}_{\on{vol}}(\mathcal{C}(M))$, it therefore corresponds to the group $H_0$ in the previous proposition. More explicitely, we have
\begin{equation}\label{Eq:Fiber1}
\pi_0^{-1}(\{ \on{vol} \}) = \{ (\varphi,\lambda) \in \on{Aut}(\mathcal{C}(M))\, : \, \varphi_*(\lambda^2 \on{vol}) = \on{vol} \}\,.
\end{equation}
The constraint $\varphi_*(\lambda^2 \on{vol}) = \on{vol}$ can be made explicit as follows
\begin{equation}\label{Eq:Fiber2}
\on{Aut}_{\on{vol}}(\mathcal{C}(M)) = \{ (\varphi,\sqrt{\on{Jac}(\varphi)}) \in \on{Aut}(\mathcal{C}(M))\,: \, \varphi \in \Diff(M) \}\,.
\end{equation}
Note that this isotropy subgroup can be identified with the group of diffeomorphims of $M$ since the map $\varphi \mapsto (\varphi,\sqrt{\on{Jac}(\varphi)})$ is also a section of the short exact sequence \eqref{Eq:ShortExactSequence}.
This shows that there is a natural identification between $\Diff(M)$ and $\on{Aut}_{\on{vol}}(\mathcal{C}(M))$.
Now, the vertical space at point $(\varphi,\sqrt{\on{Jac}(\varphi)}) \in \on{Aut}_{\on{vol}}(\mathcal{C}(M)) $ is
\begin{equation}\label{Eq:VerticalSpace1}
\on{Ker}\left(d\pi_0(\varphi,\sqrt{\on{Jac}(\varphi)})\right)= \{ (v,\alpha) \comment{\circ}  (\varphi,\sqrt{\on{Jac}(\varphi)}) \,: \, \Div v = 2\alpha \ \}\,,
\end{equation}
and equivalently
\begin{equation}\label{Eq:VerticalSpace2}
\on{Ker}\left(d\pi_0(\varphi,\sqrt{\on{Jac}(\varphi)})\right)= \left\{ \left(v,\frac12 \Div v\right) \comment{\circ}   (\varphi,\sqrt{\on{Jac}(\varphi)}) \,: \, v \in \on{Vect}(M)  \right\}\,.
\end{equation}

It is now possible to apply equation \eqref{EqCoreIdentity} to obtain the explicit formula for the right-invariant metric on $\on{Aut}_{\on{vol}}(\mathcal{C}(M))$. 
The metric $L^2(M,\mathcal{C}(M))$ on $\on{Aut}(\mathcal{C}(M))$ restricted to $\Diff(M) \simeq \on{Aut}_{\on{vol}}(\mathcal{C}(M)) $ reads 
\begin{equation}
G_\varphi(X_\varphi,X_\varphi) = \int_M |v|^2 \ud\!\on{vol}  + \frac 14 \int_M |\Div v|^2 \ud\!\on{vol}\,,
\end{equation}
where $ v = X_\varphi \circ \varphi^{-1}$.
Therefore, on $\Diff(M) \simeq \on{Aut}_{\on{vol}}(\mathcal{C}(M)) $, the induced metric is a right-invariant $H^{\Div}$ metric. In other words, we have
\begin{theorem}\label{Th:IsometricInjection}
By its identification with $\on{Aut}_{\on{vol}}(\mathcal{C}(M))$, the diffeomorphism group endowed with the $H^{\on{div}}$ right-invariant metric, see Definition \ref{DefinitionHdiv}, is isometrically embedded in $L^2(M,\mathcal{C}(M))$.
\end{theorem}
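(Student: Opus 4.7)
The plan is to unpack the two ingredients behind the claim: (i) the map $\iota : \Diff(M) \to \on{Aut}_{\on{vol}}(\mathcal{C}(M))$ defined by $\iota(\varphi) = (\varphi,\sqrt{\on{Jac}(\varphi)})$ is a bijective smooth identification, and (ii) the pullback under $\iota$ of the restriction of the ambient $L^2(M,\mathcal{C}(M))$ metric to the submanifold $\on{Aut}_{\on{vol}}(\mathcal{C}(M))$ coincides with the right-invariant $H^{\Div}$ metric of Definition \ref{DefinitionHdiv} for $a=1$ and $b=1/2$. Both facts essentially follow from the submersion picture of Proposition \ref{Th:RiemannianSubmersion}, but need to be made explicit.

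For (i), I would start from the characterisation \eqref{Eq:Fiber1} of the fiber over $\on{vol}$: the condition $\varphi_*(\lambda^2 \on{vol}) = \on{vol}$ unwinds, via the change-of-variables formula, to $(\lambda \circ \varphi^{-1})^2 \on{Jac}(\varphi^{-1}) = 1$, i.e.\ $\lambda = \sqrt{\on{Jac}(\varphi)}$. Hence $\on{Aut}_{\on{vol}}(\mathcal{C}(M))$ is the graph \eqref{Eq:Fiber2} over $\Diff(M)$, and $\iota$ is the corresponding graph bijection; smoothness in the Sobolev category $\on{Diff}^s$ for $s>d/2+1$ is immediate from smoothness of the Jacobian. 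That $\on{Aut}_{\on{vol}}(\mathcal{C}(M))$ is a genuine submanifold of $\on{Aut}(\mathcal{C}(M))$ follows from Proposition \ref{Th:RiemannianSubmersion}: since $\pi_0$ is a Riemannian submersion, its fiber is a smooth submanifold, with tangent space at $\iota(\varphi)$ equal to the kernel of $d\pi_0$, i.e.\ the vertical space described by \eqref{Eq:VerticalSpace2}.

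For (ii), I would differentiate the identity $\lambda_t = \sqrt{\on{Jac}(\varphi_t)}$ along a path $\varphi_t$ with $\dot{\varphi}_0 = X_\varphi$ and right-trivialization $v = X_\varphi \circ \varphi^{-1}$. Using $\partial_t \on{Jac}(\varphi_t) = (\Div v_t)\circ \varphi_t\, \on{Jac}(\varphi_t)$ yields $X_\lambda = \tfrac{1}{2}(\Div v)\circ \varphi\,\sqrt{\on{Jac}(\varphi)}$, whose right-trivialization in the $\lambda$-factor is $\alpha = X_\lambda\, \lambda^{-1}\circ \varphi^{-1} = \tfrac{1}{2}\Div v$. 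Thus $d\iota(X_\varphi)$ indeed lands in the prescribed tangent space \eqref{Eq:VerticalSpace2}, and applying the core identity \eqref{EqCoreIdentity} with reference density $\rho_0 = \on{vol}$ (so that on the fiber $\varphi_*(\lambda^2\on{vol}) = \on{vol}$ and the weight disappears) gives
\begin{equation*}
\|d\iota(X_\varphi)\|^2_{L^2(M,\mathcal{C}(M))} = \int_M g(v,v)\,\ud\!\on{vol} + \int_M \alpha^2\,\ud\!\on{vol} = \int_M |v|^2 \,\ud\!\on{vol} + \frac{1}{4}\int_M (\Div v)^2 \,\ud\!\on{vol},
\end{equation*}
which is precisely the $H^{\Div}$ metric with $a=1$, $b=1/2$. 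The only delicate point is tracking the square root through the differentiation, which is what produces the factor $1/4$; no serious analytical obstacle is expected, since the fiber computation is finite-dimensional pointwise over $M$ and reuses the already-established identity \eqref{EqCoreIdentity}.
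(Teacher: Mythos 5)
Your proposal is correct and follows essentially the same route as the paper: identify $\on{Aut}_{\on{vol}}(\mathcal{C}(M))$ as the graph $\varphi \mapsto (\varphi,\sqrt{\on{Jac}(\varphi)})$, compute that the right-trivialized tangent vectors are $(v,\tfrac12\Div v)$, and plug into the identity \eqref{EqCoreIdentity} with $\rho=\on{vol}$ on the fiber to recover the $H^{\Div}$ metric with $a=1$, $b=1/2$. You merely make explicit the differentiation of $\sqrt{\on{Jac}(\varphi_t)}$ that the paper encodes in the vertical-space formula \eqref{Eq:VerticalSpace2}.
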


As a straightforward application, we retrieve theorem \ref{Th:MM}.
\begin{corollary}
The distance on $\Diff(M)$ with the right-invariant metric $H^{\Div}$ is non degenerate.
\end{corollary}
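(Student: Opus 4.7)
The plan is to deduce non-degeneracy of $d_{H^{\Div}}$ from non-degeneracy of the pointwise $L^2$ distance on maps into the cone, using the isometric embedding provided by Theorem \ref{Th:IsometricInjection}. Write $\iota : \Diff(M) \hookrightarrow L^2(M,\mathcal{C}(M))$, $\iota(\varphi) = (\varphi,\sqrt{\on{Jac}(\varphi)})$. Since the embedding is isometric on tangent spaces, any smooth path $t \mapsto \varphi(t)$ in $\Diff(M)$ lifts to a smooth path $\gamma(t) = \iota(\varphi(t))$ in $L^2(M,\mathcal{C}(M))$ with the same Riemannian length; hence $d_{H^{\Div}}(\varphi_0,\varphi_1) \ge L_{L^2}(\gamma)$ for every such lift.

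The first key step is a Minkowski-type bound: for any smooth path $\gamma$ in $L^2(M,\mathcal{C}(M))$,
\begin{equation*}
L_{L^2}(\gamma) \;=\; \int_0^1 \!\left(\int_M |\dot\gamma(t,x)|^2_{\mathcal{C}(M)}\,\ud\!\on{vol}(x)\right)^{\!1/2}\!\ud t \;\ge\; \left(\int_M \!\left(\int_0^1 |\dot\gamma(t,x)|_{\mathcal{C}(M)}\,\ud t\right)^{\!2}\!\ud\!\on{vol}(x)\right)^{\!1/2},
\end{equation*}
by Minkowski's integral inequality applied to $(t,x) \mapsto |\dot\gamma(t,x)|_{\mathcal{C}(M)}$. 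Using that $\int_0^1 |\dot\gamma(t,x)|_{\mathcal{C}(M)}\,\ud t$ bounds from above the cone distance between $\gamma(0,x)$ and $\gamma(1,x)$, one obtains
\begin{equation*}
L_{L^2}(\gamma)^2 \;\ge\; \int_M d_{\mathcal{C}(M)}\bigl(\gamma(0,x),\gamma(1,x)\bigr)^2\,\ud\!\on{vol}(x).
\end{equation*}

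The second step is to check that for $\varphi_0 \ne \varphi_1$ in $\Diff(M)$, the right-hand side above evaluated at $\gamma = \iota \circ \varphi$ is strictly positive. Since $\varphi_0,\varphi_1$ are continuous and distinct, the set where $\varphi_0(x) \ne \varphi_1(x)$ is open and non-empty, hence of positive $\on{vol}$-measure. At each such $x$, the two lifted points $(\varphi_i(x),\sqrt{\on{Jac}(\varphi_i)(x)})$ lie away from the cone point (the Jacobians are positive), and they are distinct in the first coordinate, so their cone distance is strictly positive by Proposition \ref{Th:ConeDistance}. Integrating gives a positive lower bound independent of the path.

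Combining the two steps, every path in $\Diff(M)$ joining $\varphi_0$ to $\varphi_1$ has $H^{\Div}$-length bounded below by $\bigl(\int_M d_{\mathcal{C}(M)}(\iota(\varphi_0)(x),\iota(\varphi_1)(x))^2\,\ud\!\on{vol}(x)\bigr)^{1/2} > 0$, so $d_{H^{\Div}}(\varphi_0,\varphi_1) > 0$. The argument has no real obstacle beyond justifying Minkowski for the class of admissible paths, which is standard; the conceptual content is entirely absorbed into Theorem \ref{Th:IsometricInjection} and the explicit cone distance \eqref{ConeDistance}.
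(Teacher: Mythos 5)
Your proposal is correct and follows essentially the same route as the paper: lift the path through the isometric embedding of Theorem \ref{Th:IsometricInjection} and bound its length from below by the ambient $L^2(M,\mathcal{C}(M))$ distance between the lifted endpoints, which is positive for distinct diffeomorphisms. The only difference is that you make explicit (via Minkowski's integral inequality and the pointwise cone-distance lower bound) the step the paper compresses into ``since $L^2(M,\mathcal{C}(M))$ is a Hilbert manifold, the length of the lifted path is bounded below by the distance between its endpoints,'' which is a welcome clarification rather than a different argument.
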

\begin{proof}
Let $\varphi_0,\varphi_1 \in \Diff(M)$ be two diffeomorphisms and $c$ be a path joining them. The length of the path $c$ for the right-invariant metric $H^{\Div}$ is equal to the length of the lifted path $\tilde{c}$ in $\on{Aut}(\mathcal{C}(M))$.
Since $L^2(M,\mathcal{C}(M))$ is a Hilbert manifold, the length of the path $\tilde{c}$ is bounded below by the length of the geodesic joining the natural lifts of $\varphi_0$ and $\varphi_1$ in $L^2(M,\mathcal{C}(M))$. Therefore, it leads to
\begin{equation}
d_{H^{\Div}}(\varphi_0,\varphi_1) \geq d_{L^2(M,\mathcal{C}(M))}\left( (\varphi_0,\sqrt{\on{Jac}(\varphi_0)}),(\varphi_1,\sqrt{\on{Jac}(\varphi_1)})  \right)\,.
\end{equation}
If $d_{H^{\Div}}(\varphi_0,\varphi_1) =0$ then $d_{L^2(M,\mathcal{C}(M))}\left( (\varphi_0,\sqrt{\on{Jac}(\varphi_0)}),(\varphi_1,\sqrt{\on{Jac}(\varphi_1)})  \right) = 0$ which implies $\varphi_0 = \varphi_1$.
\end{proof}

\begin{remark}[The Fisher-Rao metric]
In \cite{Khesin2013}, it is shown that the $\dot{H}^1$ right-invariant metric descends to the Fisher-Rao metric on the space of densities. Let us explain why this situation differs from ours: It is well known that a left action of a group endowed with a right-invariant metric induces on the orbit a Riemannian metric for which the action is a Riemannian submersion.
However, Khesin et al. do not consider a left action, but a right action on the space of densities: More precisely, if a reference density $\rho$ is chosen, the map they considered is 
\begin{align*}
 \on{Diff}(M) &\to \Dens(M) \\
\varphi & \mapsto \varphi^{*}\rho \,.
\end{align*}
Obviously, this situation is equivalent to a left action of a group of diffeomorphisms endowed with a left-invariant metric. In such a situation, the descending metric property has to be checked \cite[Proposition 2.3]{Khesin2013}. 
\par 
Their result can be read from our point of view: The $\dot{H}^1$ metric is $\frac 14 \int_M |\Div v|^2 \ud \mu$ and it corresponds to the case where $a=0$. It thus leads to a degenerate metric on the group. Viewed in the ambient space $L^2(M,\mathcal{C}(M))$, the projection on the bundle component is a (pseudo-) isometry from $L^2(M,\mathcal{C}(M))$ (endowed with this pseudo-metric) to the space of densities since $a=0$. Moreover, on the space of densities which lie in the image of the projection, that is, the set of probability densities, the projected metric is the Fisher-Rao metric.
\end{remark}

We now use the identification between $\Diff(M)$ endowed with the right-invariant $H^{\Div}$ metric and $\on{Aut}_{\on{vol}}(\mathcal{C}(M))$ as a submanifold of $\on{Aut}(\mathcal{C}(M))$ and write the geodesic equations in this setting. As is standard for the incompressible Euler equation, the constraint is written in Eulerian coordinates and the corresponding geodesic are written hereafter.
\begin{theorem}
The geodesic equations on the fiber $\on{Aut}_{\on{vol}}(\mathcal{C}(M))$ as a Riemannian submanifold of $\on{Aut}(\mathcal{C}(M))$ endowed with the metric $L^2(M,\mathcal{C}(M))$ can be written in Lagrangian coordinates 
\begin{equation}\label{Eq:GeodEquationFinal}
\begin{cases}
\frac{D}{Dt}\dot{\varphi} + 2\frac{\dot{\lambda}}{\lambda}\dot{\varphi} = -\frac12 \nabla^g p \circ \varphi \\
\ddot{\lambda} - \lambda g(\dot{\varphi},\dot{\varphi}) = -  \lambda p \circ \varphi\,,
\end{cases}
\end{equation}
with a function $P: M \to \R$.
\\
In Eulerian coordinates, the geodesic equations read
\begin{equation}
\begin{cases}
\dot{v} + \nabla^g_v v + 2v \alpha = -\frac12 \nabla^g p  \\
\dot{\alpha} + \langle \nabla \alpha , v \rangle + \alpha^2 - g(v,v) = -p\,,
\end{cases}
\end{equation}
where $\alpha = \frac{\dot{\lambda}}{\lambda} \circ \varphi^{-1}$ and $v = \partial_t \varphi \circ \varphi^{-1}$.
\end{theorem}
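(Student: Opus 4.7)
The plan is to treat this as a standard Riemannian submanifold computation, combined with the identification from Proposition \ref{Th:RiemannianSubmersion} that the orthogonal complement to the fiber of the submersion $\pi_0$ is the horizontal distribution.

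First, I would recall that for a smooth Riemannian submanifold $N \subset H$ of a Riemannian manifold $(H,G)$, a curve $c:I\to N$ is a geodesic in $N$ if and only if its ambient covariant acceleration $\frac{D}{Dt}\dot c$ lies in the normal bundle $(TN)^{\perp}$. In our situation, $H = \on{Aut}(\mathcal{C}(M))$ with the $L^{2}(M,\mathcal{C}(M))$ metric and $N = \on{Aut}_{\on{vol}}(\mathcal{C}(M)) = \pi_0^{-1}(\on{vol})$. By construction, the tangent space $T_{(\varphi,\lambda)}N$ is exactly the vertical space of $\pi_0$ at $(\varphi,\lambda)$, and since $\pi_0$ is a Riemannian submersion (Proposition \ref{Th:RiemannianSubmersion}) the orthogonal complement is precisely the horizontal space
\[
\on{Hor}_{(\varphi,\lambda)} = \left\{\left(\tfrac{1}{2}\nabla p,\,p\right)\circ(\varphi,\lambda)\,:\,p\in C^{\infty}(M,\R)\right\}.
\]

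Next I would use Proposition \ref{Th:GeodesicEquationOnCone} to write the ambient covariant acceleration of a path $(\varphi(t),\lambda(t))$ in $\on{Aut}(\mathcal{C}(M))$ as the left-hand side of the ambient geodesic equations \eqref{Eq:LagrangianFormulation}. Combining these two facts, the submanifold geodesic condition becomes
\[
\frac{D}{Dt}(\dot\varphi,\dot\lambda) \;=\; \left(\tfrac{1}{2}\nabla p,\,p\right)\circ(\varphi,\lambda)
\]
for some time-dependent function $p:M\to\R$ playing the role of a Lagrange multiplier enforcing the constraint $\varphi_*(\lambda^{2}\on{vol}) = \on{vol}$. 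Writing this component by component, and remembering that composition in the semidirect product multiplies the $\lambda$-component by $\lambda$, gives exactly the Lagrangian system \eqref{Eq:GeodEquationFinal}.

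For the Eulerian form, I would apply the right-trivialization change of variables $v = \dot\varphi\circ\varphi^{-1}$ and $\alpha = (\dot\lambda/\lambda)\circ\varphi^{-1}$ as in Corollary \ref{Th:GeodesicEquationEulerianCone}. The unconstrained part of the equation reproduces the left-hand side of the Eulerian equations of that corollary, while the horizontal pressure term transports to $(-\tfrac12\nabla p,-p)$ at the identity, yielding the stated Eulerian system. Finally, the tangency constraint $T_{(\varphi,\lambda)}N = \on{Vert}_{(\varphi,\lambda)}$ evaluated in Eulerian coordinates enforces $\Div v = 2\alpha$ along the flow (Example \ref{InfSemi}), and differentiating this constraint in time and substituting the two evolution equations produces an elliptic equation for $p$ that determines it uniquely up to additive constants.

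The main obstacle I expect is purely formal/functional-analytic, namely justifying the submanifold characterization of geodesics in this infinite-dimensional setting and verifying that the projection onto $\on{Hor}_{(\varphi,\lambda)}$ is well defined on the relevant Sobolev classes; this reduces to the solvability and regularity of the elliptic equation for $p$, which is standard once the underlying identification with $\Diff(M)$ endowed with the $H^{\Div}$ metric (Theorem \ref{Th:IsometricInjection}) is in hand. The actual algebraic identification of the equations, by contrast, is a routine consequence of Propositions \ref{Th:GeodesicEquationOnCone} and \ref{Th:RiemannianSubmersion}.
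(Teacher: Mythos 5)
Your proposal is correct and matches the route the paper (implicitly) takes: the theorem is stated without a separate proof, but the intended argument is exactly yours — the ambient acceleration from Proposition \ref{Th:GeodesicEquationOnCone} must lie in the normal space to the fiber, which by Proposition \ref{Th:RiemannianSubmersion} is the horizontal space $\left(\frac12\nabla p, p\right)\circ(\varphi,\lambda)$, and right-trivialization as in Corollary \ref{Th:GeodesicEquationEulerianCone} then gives the Eulerian form. Your observation that the semidirect-product composition multiplies the $\lambda$-component by $\lambda$ is precisely the point that produces the factor $\lambda\, p\circ\varphi$ in the second equation, and it is consistent with the second fundamental form computed immediately after the theorem in the paper.
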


This submanifold point of view leads to a generalization of \cite[Theorem A.2]{KhesinCurvature} on the sectional curvature of $\Diff(M)$ which has been computed and studied in \cite{KhesinCurvature}. The authors show that the curvature of $\Diff(S_1)$ can be written using the Gauss-Codazzi formula and they show the explicit embedding in a semi-direct product of groups similar to our situation.
\par
As mentioned above, we consider $\Diff(M)$ as a submanifold of $L^2(M,\mathcal{C}(M))$. The second fundamental form can be computed as in the case of the incompressible Euler equation. 

\begin{proposition}
Let $U,V$ be two smooth right-invariant vector fields on $\on{Aut}(\mathcal{C}(M))$ that can be written as $U(\varphi,\lambda) = (u,\alpha) \circ (\varphi,\lambda)$ and $V(\varphi,\lambda) = (v,\beta) \circ (\varphi,\lambda)$.
The second fundamental form for the isometric embedding $\Diff(M) \hookrightarrow L^2(M,\mathcal{C}(M))$ is
\begin{equation}\label{Eq:II}
\Two(U,V) = \left( -\frac12 \nabla p \circ \varphi, - \lambda p \circ \varphi \right) \,,
\end{equation}
where $p = (2\on{Id}-\frac12 \Delta)^{-1} A(\nabla_{(u,\alpha)}(v,\beta))$ is the unique solution of the elliptic PDE \eqref{EllipticEquation}
 \begin{equation}
(2\on{Id}-\frac12 \Delta)(p) = A(\nabla_{(u,\alpha)}(v,\beta)) \,,
\end{equation} 
where $A(w,\gamma) \eqdef \Div(w) -2 \gamma$. Using the explicit expression of $\nabla_{(u,\alpha)}(v,\beta)$ the elliptic PDE  reads
 \begin{equation} \label{Eq:HLiftForII}
(2\on{Id}-\frac12\Delta)(p) =  \Div(\nabla_uv + \beta u + \alpha v) -2\langle \nabla \beta,u\rangle + 2g(u,v) - 2\alpha \beta \,.
\end{equation} 
\end{proposition}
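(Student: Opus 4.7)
The plan is to realise $\Two$ as the normal projection of the ambient Levi--Civita covariant derivative $\nabla_U V$, using two observations already at hand: by Proposition \ref{Th:RiemannianSubmersion} the horizontal distribution of $\pi_0$ coincides with the normal bundle of the fibre $\on{Aut}_{\on{vol}}(\mathcal{C}(M))$, and by \eqref{Eq:VerticalSpace2} the tangent space to this fibre at the identity is $\{(w,\gamma) : A(w,\gamma)=0\}$ with $A(w,\gamma)\eqdef \Div(w)-2\gamma$. Since $\Two$ is tensorial and $U,V$ are right-invariant, it suffices to work at $(\on{Id},1)$ and then transport by right multiplication, which accounts for the $\circ\varphi$ and the $\lambda$ factors in \eqref{Eq:II}.

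The first concrete step is to compute $\nabla_U V$ at the identity. The ambient metric is of $L^2$-type on maps into $\mathcal{C}(M)$, so its Levi--Civita connection is obtained pointwise from the cone Levi--Civita connection, as in \cite{Ebin1970,freed1989}. I would differentiate $V(\Phi_t)(x)=\bigl(v(\phi_t(x)),\,\beta(\phi_t(x))\mu_t(x)\bigr)$ along the curve $t\mapsto(\phi_t(x),\mu_t(x))$ with initial velocity $(u(x),\alpha(x))$, invoking Proposition \ref{Th:ConeConnection}: in the cone basis $(\hat X, e)$ the connection has the non-trivial cross terms $\nabla^{\mathcal{C}}_{\hat X}\hat Y = \widehat{\nabla^g_X Y}-rg(X,Y)e$ and $\nabla^{\mathcal{C}}_e \hat X = \nabla^{\mathcal{C}}_{\hat X} e = \tfrac{1}{r}\hat X$. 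Bookkeeping these at $r=1$ should produce
\begin{equation*}
\nabla_{(u,\alpha)}(v,\beta) \;=\; \bigl(\nabla_u v+\alpha v+\beta u,\; \langle\nabla\beta,u\rangle+\alpha\beta-g(u,v)\bigr),
\end{equation*}
where the $\alpha v$ piece comes from $\alpha\,\nabla^{\mathcal{C}}_e\hat v$, the $\beta u$ piece from $\beta\,\nabla^{\mathcal{C}}_{\hat u} e$, and the $-g(u,v)$ from the Christoffel contribution of $\nabla^{\mathcal{C}}_{\hat u}\hat v$.

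The second step is to project onto the normal (horizontal) space $\{(\tfrac12\nabla p,p)\}$. Writing $\Two(U,V)=(-\tfrac12\nabla p,-p)$, the orthogonality requirement is that $\nabla_U V+(\tfrac12\nabla p,p)$ be tangent, i.e.\ annihilated by $A$. Linearity together with the identity $A(\tfrac12\nabla p,p)=\tfrac12\Delta p-2p$ then yields the elliptic equation $(2\on{Id}-\tfrac12\Delta)p = A(\nabla_U V)$; substituting the explicit $\nabla_U V$ produced above immediately gives \eqref{Eq:HLiftForII}. Unique solvability with the claimed regularity follows from standard elliptic theory exactly as in Proposition \ref{HorizontalLiftFormulation}, since $2\on{Id}-\tfrac12\Delta$ is strictly positive and self-adjoint on the compact boundaryless manifold $M$. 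I expect the main nuisance — rather than a deep obstacle — to be precisely the cone-connection bookkeeping, where four terms combine to give the scalar component and a single mishandled sign would yield the wrong elliptic right-hand side; the rest of the argument is formally an application of the Riemannian submersion/submanifold dictionary set up in Sections \ref{Sec:GeomViewPoint} and \ref{Sec:Submanifold}.
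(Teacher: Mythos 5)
Your proposal is correct and follows essentially the same route as the paper: reduce to the identity by right-invariance, compute $\nabla_{(u,\alpha)}(v,\beta)$ pointwise from the cone connection of Proposition \ref{Th:ConeConnection}, and project onto the horizontal space via the operator $A$, which yields the elliptic equation $(2\on{Id}-\frac12\Delta)p = A(\nabla_{(u,\alpha)}(v,\beta))$. The only difference is that you spell out the orthogonality condition $A\bigl(\nabla_U V + (\tfrac12\nabla p, p)\bigr)=0$ explicitly, whereas the paper phrases the projection directly as the horizontal lift of Proposition \ref{HorizontalLiftFormulation}; both computations and all signs agree.
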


\begin{proof}
By right-invariance of the metric, it suffices to treat the case $(\varphi,\lambda) = \on{Id}$. The orthogonal projection is the horizontal lift defined in Proposition \ref{HorizontalLiftFormulation}. Therefore, we compute the infinitesimal action of $\nabla_{(u,\alpha)}(v,\beta)$ on the volume form which is given by the linear operator $A$ and we consider its horizontal lift $(-\frac12\nabla p, -p)$ given by Proposition \ref{HorizontalLiftFormulation}. 
By right-invariance, the orthogonal projection at $(\varphi,\lambda)$ is given by $\left( -\frac12\nabla p \circ \varphi, -\lambda p \circ \varphi \right)$.
\par
From Proposition \ref{Th:ConeConnection}, one has
\begin{equation}
\nabla_{(u,\alpha)}(v,\beta) = \left( \nabla_uv + \beta u + \alpha v, \langle \nabla \beta,u\rangle - g(u,v) + \alpha \beta \right)\,,
\end{equation}
and Formula \eqref{Eq:HLiftForII} follows. 
\end{proof}

We can then state the Gauss-Codazzi formula applied to our context.
\begin{proposition}
Let $U,V$ be two smooth right-invariant vector fields on $\on{Aut}_{\on{vol}}(\mathcal{C}(M))$ written as $U(\varphi,\lambda) = (u,\alpha) \circ (\varphi,\lambda)$ and $V(\varphi,\lambda) = (v,\beta) \circ (\varphi,\lambda)$.
The sectional curvature of $\Diff(M)$ endowed with the right-invariant $H^{\Div}$ metric is
\begin{equation}
\langle R_{\on{Diff}(M)}(U,V)V,U \rangle =\langle R_{L^2(M,\mathcal{C}(M))}(U,V)V,U \rangle + \langle \Two(U,U) , \Two(V,V) \rangle -  \langle \Two(U,V) , \Two(U,V) \rangle\,,
\end{equation}
where $\Two$ is the second fundamental form \eqref{Eq:II} and 
\begin{equation}
\langle R_{L^2(M,\mathcal{C}(M))}(U,V)V,U \rangle = \int_M \langle R_{\mathcal{C}(M)}(u,v)v,u \rangle \circ (\varphi,\lambda) \ud \mu\,,
\end{equation}
where $(\varphi,\lambda) \in \on{Aut}(\mathcal{C}(M))$.
\end{proposition}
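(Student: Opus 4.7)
The plan is to recognize this statement as a direct application of the classical Gauss equation for Riemannian submanifolds, applied to the isometric embedding $\on{Aut}_{\on{vol}}(\mathcal{C}(M)) \hookrightarrow \on{Aut}(\mathcal{C}(M))$ endowed with the $L^2(M,\mathcal{C}(M))$ metric, where the identification of $\on{Aut}_{\on{vol}}(\mathcal{C}(M))$ with $\Diff(M)$ endowed with the $H^{\Div}$ right-invariant metric is provided by Theorem \ref{Th:IsometricInjection}. In finite dimensions the Gauss equation reads exactly
\[
\langle R_{N}(U,V)V,U \rangle = \langle R_{\bar N}(U,V)V,U \rangle + \langle \Two(U,U),\Two(V,V) \rangle - \langle \Two(U,V),\Two(U,V) \rangle,
\]
so all that remains is to (i) identify the ambient curvature term with the pointwise integral of $R_{\mathcal{C}(M)}$, and (ii) plug in the second fundamental form computed in the previous proposition via \eqref{Eq:II}.

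For step (i), I would use the general fact, which goes back to Ebin--Marsden and is made explicit in \cite{freed1989}, that the Levi-Civita connection on a manifold of mappings $L^2(M,N)$ is the pointwise lift of the Levi-Civita connection on the target $N$. Consequently its Riemann curvature tensor is itself the pointwise lift of $R_N$: if $U(\varphi,\lambda) = (u,\alpha)\circ(\varphi,\lambda)$ and $V(\varphi,\lambda) = (v,\beta)\circ(\varphi,\lambda)$ are right-invariant vector fields, then
\[
\langle R_{L^2(M,\mathcal{C}(M))}(U,V)V,U\rangle_{(\varphi,\lambda)} = \int_M \langle R_{\mathcal{C}(M)}(u,v)v,u\rangle \circ (\varphi,\lambda)\,\ud \mu,
\]
which is exactly the formula stated. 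Combined with \eqref{Eq:II} for $\Two$, one obtains the claimed Gauss-Codazzi identity.

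The main obstacle is not algebraic but analytic: justifying that the Gauss equation truly applies in the present weak Riemannian infinite-dimensional setting, where the topology induced by the $L^2(M,\mathcal{C}(M))$ metric is much weaker than the ambient $H^s$ topology in which the diffeomorphism groups are modeled. This obstruction is however mild because all objects entering the equation are smooth enough: the right-invariant vector fields $U,V$ are built from smooth vector fields and functions, $\Two(U,V)$ is smooth since it is obtained by an elliptic inversion via \eqref{Eq:HLiftForII}, and the orthogonal projection onto the horizontal space (which defines the induced connection) is a smoothing operator in the sense of Proposition \ref{HorizontalLiftFormulation}. Therefore the usual derivation of the Gauss equation from the Koszul-type splitting $\bar\nabla_U V = \nabla^{\on{Diff}(M)}_U V + \Two(U,V)$ carries over verbatim, much as in \cite[Theorem A.2]{KhesinCurvature} for the incompressible Euler case, concluding the proof.
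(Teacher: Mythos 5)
Your proposal is correct and follows essentially the same route as the paper: the paper's proof is simply the observation that the Gauss--Codazzi formula applies to the isometric embedding and that the ambient curvature of $L^2(M,\mathcal{C}(M))$ is computed pointwise from $R_{\mathcal{C}(M)}$ as in Freed and Groisser \cite{freed1989}. Your additional remarks on why the formal derivation survives in the weak Riemannian setting are a welcome elaboration but do not change the argument.
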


\begin{proof}
The only remaining point is the computation of the sectional curvature of $L^2(M,\mathcal{C}(M))$ which is done in Freed and Groisser's article \cite{freed1989}.
\end{proof}

Note that the sectional curvature of $L^2(M,\mathcal{C}(M))$ vanishes if $M = S_n$ since $\mathcal{C}(M) = \R^{n+1}$, which is the case for the one-dimensional Camassa-Holm equation. However, for $M = T_n$, $n\geq 2$, the flat torus, the sectional curvature of $\mathcal{C}(M)$ is non-positive and bounded below by $-1$ and thus the sectional curvature of $L^2(T_n,\mathcal{C}(T_n))$ is non-positive.


\section{Applications}\label{Sec:Applications}

The point of view developed above provides an example of an isometric embedding of the group of diffeomorphisms endowed with the right-invariant $H^{\Div}$ metric in an $L^2$ space such as $L^2(M,N)$, here with $N = \mathcal{C}(M)$. 
In this section, we develop two applications of this point of view. The first one consists in rewriting the Camassa-Holm equation as particular solutions of the incompressible Euler equation on the cone; the results hold in higher dimensions for the geodesics of the $H^{\Div}$ metric. The second application is about minimizing properties of solutions of the Camassa-Holm equation and its generalization with $H^{\Div}$. We prove that, under mild conditions, smooth solutions are length minimizing for short times.

\subsection{The Camassa-Holm equation}\label{Sec:CHAsEuler}
Let us consider the following Camassa-Holm equation, 
\begin{equation} \label{Eq:CH23}
\begin{cases}
\partial_{t} u - \frac14 \partial_{txx} u +3 \partial_{x} u \,u  - \frac12\partial_{xx} u \,\partial_x u - \frac14\partial_{xxx} u \,u = 0\,\\
\partial_t \varphi(t,x) = u(t,\varphi(t,x))\,.
\end{cases}
\end{equation}
With respect to the standard Camassa-Holm equation, this equation has different coefficients that are chosen here to simplify the discussion. Unless otherwise mentioned, all the results still apply to the standard formulation of the equation.
For such a choice of coefficients, the cone construction $\mathcal{C}(S_1)$ is isometric to $\R^2 \setminus \{ 0\}$ with the Euclidean metric. Following Theorem \ref{Th:IsometricInjection}, we have the isometric injection
\begin{align}
\mathcal{M}: \Diff(S_1) &\to \on{Aut}(\mathcal{C}(S_1)) \subset L^2(S_1,\R^2)\\
\varphi & \mapsto (\varphi,\sqrt{\varphi'}) = \sqrt{\varphi'} e^{i\varphi}\,.
\end{align}
Then, solutions of the Camassa-Holm equation are geodesic for the flat metric $L^2(S_1,\R^2)$ on the constrained submanifold of maps $(\varphi,\lambda)$ defined by the constraint $\varphi' = \lambda^2$. 
Note that the map $\mathcal{M}$ is very similar to a Madelung transform which maps solutions of the Schr\"odinger equation to solutions of a compressible Euler type of hydrodynamical equation. The geodesic equation on $ \on{Aut}(\mathcal{C}(S_1))$ reads
\begin{equation}\label{Eq:GeodesicSubmanifoldSimple2}
\begin{cases}
\ddot{\varphi} + 2\frac{\dot{\lambda}}{\lambda}\dot{\varphi} = - \frac12 \partial_x p \circ \varphi \\
\ddot{\lambda} - \lambda \dot{\varphi}^2 = -  \lambda  p \circ \varphi\,,
\end{cases}
\end{equation}
where $p: S_1 \to \R$.
Formula \eqref{Eq:GeodesicSubmanifoldSimple2} looks similar to the incompressible Euler equation in Lagrangian coordinates. However, this geodesic equation is apparently written on the space of maps $S_1 \mapsto \mathcal{C}(S_1)$. Since $\on{Aut}(\mathcal{C}(S_1)) \subset \Diff(\mathcal{C}(S_1))$, it can be expected to be a geodesic equation on the group of diffeomorphism of the cone. Indeed, we have
\begin{theorem}
Solutions to the Camassa-Holm equation on $S_1$
\begin{equation}
\partial_{t} u - \frac14 \partial_{txx} u +3 \partial_{x} u \,u  - \frac12\partial_{xx} u \,\partial_x u - \frac14\partial_{xxx} u \,u = 0\,
\end{equation}
are mapped to solutions of the incompressible Euler equation on $\R^2 \setminus \{ 0\}$ for the density $ \rho = \frac{1}{r^4} \on{Leb}$, that is
\begin{equation}\label{Eq:EulerSimple2}
\begin{cases}
\dot{v} + \nabla_v v = -\nabla P\,,\\
\nabla \cdot (\rho v) = 0\,,
\end{cases}
\end{equation}
\begin{equation*}
\mbox{ by the map } : 
\left[ \begin{array}{c}
u\, :S_1 \to \R \\
\theta \mapsto u(\theta)
\end{array} \right] \mapsto 
\left[\begin{array}{c}
v\,: S_1\times \R_*^+ = \mathcal{C}(S_1)  \to \R^2 \\
(\theta,r) \mapsto \left(u(\theta),\frac{r}{2} \partial_x u(\theta)\right)
\end{array}\right]
\end{equation*}
\end{theorem}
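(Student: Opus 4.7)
The plan is to prove the theorem by combining the submanifold viewpoint of Theorem~\ref{Th:IsometricInjection} with a direct computation on the cone. Using the isometry $\mathcal{C}(S_1)\cong\R^2\setminus\{0\}$ provided by Proposition~\ref{Th:ConeDistance}, any solution $u$ of the Camassa--Holm equation lifts via its Lagrangian flow $\varphi(t)$ to a path of automorphisms $\Phi(t)(\theta,r)=(\varphi(t)(\theta),\,r\sqrt{\varphi(t)'(\theta)})$ in the isotropy subgroup $\on{Aut}_{\on{vol}}(\mathcal{C}(S_1))$. After right-trivialisation of the semidirect product, I read off $v=u$ (spatial velocity) and $\alpha=\tfrac12\partial_x u$ (growth rate) by differentiating $\lambda=\sqrt{\varphi'}$; the resulting Eulerian velocity on the cone is
\begin{equation*}
V(\theta,r)=(v(\theta),\,r\alpha(\theta))=\bigl(u(\theta),\,\tfrac{r}{2}\partial_x u(\theta)\bigr),
\end{equation*}
which is precisely the map in the statement.

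Next I would verify the incompressibility constraint by a one-line Jacobian computation. In polar coordinates $\rho\on{Leb}=r^{-3}\,dr\wedge d\theta$, and
\begin{equation*}
\Phi^{*}\bigl(r^{-3}\,dr\wedge d\theta\bigr)=\tfrac{1}{(r\lambda)^{3}}\,d(r\lambda)\wedge d\varphi=\tfrac{\lambda\varphi'}{r^{3}\lambda^{3}}\,dr\wedge d\theta=r^{-3}\,dr\wedge d\theta
\end{equation*}
using the constraint $\varphi'=\lambda^{2}$ that defines $\on{Aut}_{\on{vol}}(\mathcal{C}(S_1))$. Equivalently, $\nabla\cdot(\rho V)=0$.

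The main step is the Euler equation $\dot V+\nabla_VV=-\nabla P$. Decomposing $V=\hat v+(r\alpha)\partial_r$ and applying the Levi-Civita formulas of Proposition~\ref{Th:ConeConnection}, a direct computation yields
\begin{equation*}
\dot V+\nabla_VV=\widehat{\dot v+\nabla_vv+2\alpha v}+r\bigl(\dot\alpha+\langle\nabla\alpha,v\rangle+\alpha^{2}-g(v,v)\bigr)\partial_r.
\end{equation*}
The two bracketed expressions are exactly the left-hand sides of the Eulerian geodesic equations on $\on{Aut}_{\on{vol}}(\mathcal{C}(S_1))$ derived in Section~\ref{Sec:Submanifold}, which equal $-\tfrac12\nabla p$ and $-p$ respectively, for the Lagrange-multiplier pressure $p(\theta)$ of the $H^{\Div}$ geodesic equation. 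Setting $P(\theta,r)=\tfrac{r^{2}}{2}p(\theta)$ and using the cone metric $r^{2}g+dr^{2}$, one checks $\nabla^{\mathcal{C}}P=\bigl(\tfrac{1}{2}\widehat{\nabla p},\,rp\,\partial_r\bigr)$, which delivers the claimed identity $\dot V+\nabla_VV=-\nabla P$.

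The main (and essentially only) conceptual step is the pressure ansatz $P(\theta,r)=\tfrac{r^{2}}{2}p(\theta)$: the $r$-dependence is not free but dictated by the cone geometry, because the factor $r^{2}$ is precisely what converts the inverse-metric coefficient $1/r^{2}$ on the $\theta$-component of $\nabla^{\mathcal{C}}P$ into the $1/2$ of the base equation, while simultaneously producing the $-rp$ needed on the radial component. A more conceptual variant of the proof would invoke Proposition~\ref{Rem:TotallyGeodesic} to transfer the constrained geodesic equation on $\on{Aut}_{\on{vol}}(\mathcal{C}(S_1))$ directly to an incompressible Euler equation on the subgroup of $\rho$-preserving diffeomorphisms of $\mathcal{C}(S_1)$; from this point of view the ansatz for $P$ merely expresses the orthogonal projection onto the horizontal space of the Riemannian submersion $\pi_{0}$.
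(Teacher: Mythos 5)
Your proposal is correct and follows essentially the same route as the paper: the same lift $\Phi(t)(\theta,r)=(\varphi(t)(\theta),r\sqrt{\varphi'(t)(\theta)})$, the same pressure ansatz $\Psi_p(x,r)=\tfrac{r^2}{2}p(x)$, the same Jacobian computation showing $r^{-3}\,\ud r\,\ud\theta$ is preserved, and the same differentiation at the identity to obtain $(u,\tfrac{r}{2}\partial_x u)$. The only difference is that you verify the momentum balance in Eulerian form via the cone connection of Proposition \ref{Th:ConeConnection}, whereas the paper does it in Lagrangian form by exploiting the $0$- and $1$-homogeneity in $\lambda$ of the two geodesic equations to write $\ddot{\Phi}=-\nabla\Psi_p\circ\Phi$; the two verifications are equivalent.
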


\begin{proof}
We show that $\mathcal{M}(\varphi)$ provides solutions to the incompressible Euler equation written in Lagrangian coordinates. 
The second equation in \eqref{Eq:GeodesicSubmanifoldSimple2} being linear in $\lambda$ and the first equation being $0$ homogeneous in $\lambda$, the geodesic equations can be rewritten as
\begin{equation}\label{Eq:GeodesicSubmanifold}
\begin{cases}
\ddot{\varphi} + 2\frac{\dot{\lambda}}{\lambda}\dot{\varphi} = - \frac12\partial_x p \circ \varphi \\
\ddot{\lambda}r - \lambda r \dot{\varphi}^2 = - \lambda r  p \circ \varphi\,.
\end{cases}
\end{equation}
Thus, the map $\Phi(t) : (x,r) \mapsto (\varphi(t,x),\lambda(t,x)r)$ satisfies
\begin{equation}
\ddot{\Phi}(t)(x,r) = - \nabla \Psi_p(t) \circ \Phi(t) \,,
\end{equation}
where $\Psi_p(x,r) = \frac12 r^2 p (x)$. This formulation is close to the incompressible Euler equation, however, we need to check if the density $\rho(r,\theta) = 1/r^3 \ud r \ud \theta$ is preserved by pull-back by $\Phi$, or equivalently due to the group structure, by pushforward. We first compute the Jacobian matrix, recalling that $\lambda=\sqrt{\partial_x \varphi}$,
\begin{equation*}
D\Phi(x,r) = \begin{pmatrix} \partial_x \varphi & 0 \\ \frac{\partial_{xx} \varphi}{2\sqrt{\partial_x \varphi}} & \sqrt{\partial_x \varphi}\end{pmatrix}\,,
\end{equation*}
whose determinant is $(\partial_x \varphi)^{3/2} $.
We now compute the pushforward
\begin{align*}
\on{Jac}(\Phi) \rho \circ \Phi(x,r) &= 1/(r\,\sqrt{\partial_x \varphi})^3 \on{Jac}(\Phi) \\
& = 1/(r \, \sqrt{\partial_x \varphi})^3 (\partial_x \varphi)^{3/2} = \frac{1}{r^3} = \rho(x,r) \,.
\end{align*}
This proves the result in Lagrangian coordinates. To get the formulation in the theorem, one differentiates the map $\Phi$ at identity which gives $(u,\frac r2 \partial_x u)$ for the vector field in polar coordinates.
\end{proof}

{\color{black} 
\begin{remark}[About the blow-up]
At this point, a natural question is about the difference between global well-posedness of incompressible Euler in 2D, whereas the Camassa-Holm equation has a well understood blow-up. Of course, there is no contradiction since the density for which the CH equation is similar to Euler has a singularity at zero, which allows for unbounded vorticity although we did not check this possibility. 
In a similar direction, we can cite \cite{2017Elgindi}, since the authors mention that the singularity comes "from the vorticity amplification due to the presence of a density gradient".
Note also that the typical situation of blow-up of the CH equation in the case of colliding peakons can be understood in this situation as the quantity $\sqrt{\partial_x \varphi}$ goes to zero in finite time.
\end{remark}
}
The second application consists in showing that smooth solutions of the Camassa-Holm equation are length minimizing for short times.  
\begin{theorem}[Smooth solutions to the Camassa-Holm equation \eqref{Eq:CH23} are length minimizing for short times.]
Let $(\varphi(t),\lambda(t))$ be a smooth solution to the geodesic equations \eqref{Eq:CH23} (in the formulation \eqref{Eq:GeodesicSubmanifoldSimple2}) on the time interval $[t_0,t_1]$.
If $(t_1-t_0)^2|\langle w, \nabla^2 \Psi_{p}(x,r) w \rangle| < \pi^2 \| w \|^2$ holds for all $t\in [t_0,t_1]$ and $(x,r) \in \mathcal{C}(S_1)$ and $w \in T_{(x,r)} \mathcal{C}(S_1)$, then for every smooth curve $(\varphi_0(t),\lambda_0(t)) \in \on{Aut}_{\on{vol}}(\mathcal{C}(S_1))$ satisfying $  (\varphi_0(t_i),\lambda_0(t_i)) = (\varphi(t_i),\lambda(t_i))$ for $i = 0,1$ one has
\begin{equation}
\int_{t_0}^{t_1} \| (\dot{\varphi},\dot{ \lambda}) \|^2 \, \ud t \leq \int_{t_0}^{t_1} \| (\dot{\varphi}_0,\dot{ \lambda }_0) \|^2 \, \ud t \,,
\end{equation}
with equality if and only if the two paths coincide on $[t_0,t_1]$.

\end{theorem}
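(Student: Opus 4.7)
The strategy is Brenier's classical convexity argument applied in the flat Hilbert space furnished by our construction. By Theorem~\ref{Th:IsometricInjection} and the totally geodesic property (Proposition~\ref{Rem:TotallyGeodesic}), both the geodesic $\Phi(t)=(\varphi,\lambda)(t)$ and any competitor $\Phi_0(t)=(\varphi_0,\lambda_0)(t)$ in $\on{Aut}_{\on{vol}}(\mathcal{C}(S_1))$ can be viewed as curves in the flat Hilbert space $L^2(S_1,\R^2)$, identifying $\mathcal{C}(S_1)\cong \R^2\setminus\{0\}$ via Cartesian coordinates. In this ambient representation, the constrained geodesic equation \eqref{Eq:GeodesicSubmanifoldSimple2} is the pointwise Newtonian system $\ddot{\Phi}(t,x) = -\nabla\Psi_{p(t)}(\Phi(t,x))$ with $\Psi_p(x,r)=\tfrac{1}{2}r^2 p(x)$. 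Expanding the action difference and integrating by parts in time (the boundary terms vanish since $\Phi_0(t_i)=\Phi(t_i)$) yields
\begin{equation*}
\int_{t_0}^{t_1}\!\bigl(\|\dot{\Phi}_0\|^2 - \|\dot{\Phi}\|^2\bigr)\ud t = \int_{t_0}^{t_1}\!\|\dot{\Phi}_0-\dot{\Phi}\|^2\,\ud t \,+\, 2\int_{t_0}^{t_1}\!\int_{S_1}\langle \nabla\Psi_p(\Phi),\Phi_0-\Phi\rangle\,\ud x\,\ud t\,.
\end{equation*}

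The key cancellation comes from the constraint $\lambda^2=\p_x\varphi$ defining $\on{Aut}_{\on{vol}}(\mathcal{C}(S_1))$ (equation~\eqref{Eq:Fiber2}): the change of variables $y=\varphi(x)$ gives $\int_{S_1}\Psi_p(\Phi(t,x))\,\ud x = \tfrac12\int_{S_1}\p_x\varphi\cdot p\circ\varphi\,\ud x = \tfrac12\int_{S_1}p(y)\,\ud y$, an expression independent of the particular element of $\on{Aut}_{\on{vol}}(\mathcal{C}(S_1))$. Consequently $\int_{S_1}[\Psi_p(\Phi_0)-\Psi_p(\Phi)]\,\ud x = 0$ at every $t$. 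Writing the second-order Taylor expansion of $\Psi_p$ along the Euclidean segment in $\R^2$ between $\Phi(t,x)$ and $\Phi_0(t,x)$ and integrating, the zeroth-order equality promotes the first-order term into a pure Hessian remainder:
\begin{equation*}
2\int_{S_1}\!\langle\nabla\Psi_p(\Phi),\Phi_0-\Phi\rangle\,\ud x \,=\, -\int_{S_1}\!\langle\Phi_0-\Phi,\nabla^2\Psi_p(\Phi^*)(\Phi_0-\Phi)\rangle\,\ud x
\end{equation*}
for some intermediate $\Phi^*(t,x)$ on the segment.

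To conclude, one combines the hypothesis, read as $|\langle w,\nabla^2\Psi_p(x,r)w\rangle|\leq C\|w\|^2$ with $C<\pi^2/(t_1-t_0)^2$, with the Poincar\'e--Wirtinger inequality $\int_{t_0}^{t_1}\!\|\Phi_0-\Phi\|_{L^2}^2\,\ud t \leq \tfrac{(t_1-t_0)^2}{\pi^2}\int_{t_0}^{t_1}\!\|\dot{\Phi}_0-\dot{\Phi}\|_{L^2}^2\,\ud t$, valid since $\Phi_0-\Phi$ vanishes at both endpoints. This gives
\begin{equation*}
\int_{t_0}^{t_1}\!\bigl(\|\dot{\Phi}_0\|^2-\|\dot{\Phi}\|^2\bigr)\,\ud t \,\geq\, \left(1 - \frac{C(t_1-t_0)^2}{\pi^2}\right)\int_{t_0}^{t_1}\!\|\dot{\Phi}_0-\dot{\Phi}\|^2\,\ud t \,\geq\, 0\,,
\end{equation*}
with strict positivity unless $\dot{\Phi}_0\equiv\dot{\Phi}$ a.e., which combined with the matching endpoints forces $\Phi_0\equiv\Phi$.

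The main point requiring care is the Cartesian Taylor expansion: $\Psi_p$ is smooth only on $\R^2\setminus\{0\}$, so the Euclidean segment from $\Phi(t,x)$ to $\Phi_0(t,x)$ must avoid the cone apex, and the uniform Hessian bound must hold on a compact set large enough to contain both paths. Since the smooth solution $\Phi$ takes values in a compact region of $\R^2\setminus\{0\}$ on $[t_0,t_1]$, this is automatic for competitors in a small $L^\infty$-neighborhood of $\Phi$, and an approximation argument then extends the inequality to arbitrary smooth competitors.
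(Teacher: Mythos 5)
Your proof is correct and follows essentially the same route as the paper: Brenier's convexity argument in the flat ambient space $L^2(S_1,\R^2)$, using the invariance of $\int_{S_1}\Psi_p\circ\Phi\,\ud x$ over $\on{Aut}_{\on{vol}}(\mathcal{C}(S_1))$ to reduce the first-order term to a Hessian remainder, then the Poincar\'e inequality in time. Your direct expansion of the action difference is just the paper's computation of $f(1)-f(0)-f'(0)$ along the linear homotopy written out explicitly (in flat space $\partial_s c=\Phi_0-\Phi$ is constant in $s$, so the two are identical), and your caution about the cone apex is unnecessary here since the Hessian of $\Psi_p$ is uniformly bounded on all of $\R^2\setminus\{0\}$ by Remark \ref{RemConditionOnHessian}.
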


\begin{remark}
This result only applies to this choice of coefficients and for other choices of coefficients the result still holds in an $L^\infty$ neighborhood of the geodesic. This is done in the more general case of $H^{\Div}$ in the next section. Since the proof is a direct adaptation of Brenier's \cite{BRENIER200355} and it is simple in this particular case, we include it hereafter. It also helps to understand the proof in the general case of a Riemannian manifold.
\end{remark}

\begin{proof}
To alleviate notations, we denote $g_t = (\varphi(t),\lambda(t))$ and $h_t = (\varphi_0(t),\lambda_0(t))$. 
Since $p$ can be chosen with zero mean, $\Psi_p(x,r) = \frac12 r^2 p (x)$ and $g_t = (\varphi(t),\sqrt{\on{Jac}(\varphi(t))})$, by direct integration, for every $t \in [t_0,t_1]$
\begin{equation}
\int_{S_1} \Psi_p(g_t(x)) \ud x = 0\,.
\end{equation}
The same equality holds for $h_t$.
Let $s \in [0,1] \mapsto c(t,s,x)$ be a two parameters ($t\in [t_0,t_1]$ and $x \in {S_1}$) smooth family of geodesics on $\R^2$ such that $c(t,0,x) = g_t(x)$ and $c(t,1,x) = h_t(x)$ for every $t \in [t_0,t_1]$ and $x \in {S_1}$. 
Let us define $J(t,s,x) = \partial_t c(t,s,x)$, we have
\begin{equation}
J(t,0,x) = \partial_t g_t(x) \text{ and } J(t,1,x) = \partial_t h_t(x)\,.
\end{equation}
Now, the result we want to prove can be reformulated as,
\begin{equation}
\int_{t_0}^{t_1} \int_{S_1} \| J(t,0,x) \|^2 \ud t \ud x \leq \int_{t_0}^{t_1} \int_{S_1} \| J(t,1,x) \|^2 \ud t \ud x
\end{equation}
with equality if and only if for almost every $x$, it holds $g_t(x)  = h_t(x)$ for all $t \in [t_0,t_1]$.
Using a second-order Taylor expansion of $\Psi_p(c(t,s,x))$ with respect to $s$ at $s = 0$ and denoting by $C \eqdef \sup_{t \in [t_0,t_1]}\sup_{x \in {S_1}} \|\nabla^2 \Psi_{p}(x)\| $, we have, 
\begin{align*}
& \Psi_p(h_t(x)) -  \Psi_p(g_t(x)) - \langle \nabla \Psi_p(c(t,0,x)), \partial_s c(t,0,x) \rangle  \leq \frac{C}{2} \int_{0}^{1}\|\partial_s c(t,s,x)\|^2 \ud s \,.
\end{align*}
We will integrate in time $t$ and apply the one dimensional Poincaré inequality in the $t$ variable
\begin{equation}
\int_{t_0}^{t_1} \|\partial_s c(t,s,x)\|^2 \ud t \leq \frac{C(t_1 - t_0)^2}{2\pi^2} \int_{t_0}^{t_1} | \partial_t \|\partial_s c(t,s,x) \| |^2 \ud t\,,
\end{equation}
for every $s,x$. Since $c(t,0,x)$ is a solution of the Camassa-Holm equation, one has $\partial_{tt} c = - \nabla \Psi_p(t)$.
Thus, we have, integrating in time 
\begin{align*}
\int_{t_0}^{t_1} \Psi_p(h_t(x)) -  \Psi_p(g_t(x)) + \langle \partial_{tt}c(t,0,x) , \partial_s c(t,0,x) \rangle \ud t \leq \frac{C(t_1 - t_0)^2}{2\pi^2} \int_{t_0}^{t_1}  \int_0^1 | \partial_t \|\partial_s c(t,s,x) \| |^2 \ud s \ud t
 \,.
\end{align*}
We also have $| \partial_t \|\partial_s c(t,s,x) \| |^2 \leq  \|\partial_{ts} c(t,s,x) \| ^2$. Then,
integrating over $S_1$, the two first terms on the l.h.s. vanish and integrating by part in time, we get
\begin{equation}
\int_{t_1}^{t_2} \int_{S_1} - \langle \partial_{t}c(t,0,x) , \partial_{st} c(t,0,x) \rangle \ud t \leq \frac{C(t_1 - t_0)^2}{2\pi^2} \int_{t_0}^{t_1}  \int_{S_1} \int_0^1  \|\partial_{ts} c(t,s,x) \| ^2 \ud s \ud x\ud t
 \,,
\end{equation}
where we used the fact that $\partial_s c(t,s,x)$ is constant in $s$ since the geodesics on the plane are straight lines.
Writing $f(s) =\frac 12  \int_{t_0}^{t_1} \int_{S_1} \| J(t,s,x) \|^2 \ud t $, we want to prove $f(1) \geq f(0)$ and we have $$-f'(0) \leq \frac{C(t_1 - t_0)^2}{2\pi^2} \int_{t_0}^{t_1} \int_{S_1} \int_0^1 \| \partial_sJ(t,s,x) \|^2 \ud s \ud x\ud t\,.$$
Therefore, the result is proven if we can show that for some $\varepsilon > 0$
\begin{equation}
f(1) - f(0) - f'(0) \geq \varepsilon \int_{t_0}^{t_1} \int_{S_1} \int_0^1 \| \partial_s J(t,s,x) \|^2 \ud s \ud x\ud t\,.
\end{equation}
We have $f(1)- f(0) - f'(0) = \int_0^1 (1-s) f''(s) \ud s$ and here $f''(s) = \int_{t_0}^{t_1} \int_{S_1} \| \partial_s J(t,s,x) \|^2 \ud t \ud x$ since $\partial_{ss} J = 0$ because $\R^2$ has vanishing curvature, and also $\partial_s J = \on{cste}(t,x)$, a constant w.r.t. $s$. Hence, we get 
\begin{equation}
f(1)- f(0) - f'(0) =\frac 12  \int_{t_0}^{t_1} \int_{S_1} \int_0^1 \| \partial_sJ(t,s,x) \|^2 \ud s \ud x\ud t\,.
\end{equation}
Therefore, $$f(1) - f(0) \geq \left(\frac 12 - \frac{C(t_1 - t_0)^2}{2\pi^2}\right)  \int_{t_0}^{t_1}\int_{S_1} \int_0^1 \| \partial_sJ(t,s,x) \|^2 \ud s \ud x\ud t\,,$$
which is nonnegative if $t_1 - t_0 \leq \frac{\pi}{\sqrt{C}}$.
\end{proof}

 \begin{remark}
The condition on the Hessian is satisfied for smooth paths, see Remark \ref{RemConditionOnHessian}.
Moreover, similarly to Brenier's proof, the constant is sharp since the rotation at unit speed is a particular solution of the Camassa-Holm equation for which the Hessian is equal to $1$ and it stops being a minimizer at the angle $\pi$.
 \end{remark}
\subsection{The $H^{\Div}$ case in higher dimensions}\label{Sec:GeneralHdiv}
In the general case, we are left with the geometry of the cone, and therefore, the map $\mathcal{M}$ maps solutions of the geodesic equation on the diffeomorphisms group for the right-invariant $H^{\Div}$ metric to solutions of the incompressible Euler equation on the $\mathcal{C}(M)$ for a density which has a singularity at the cone point. 
In the general case, the geodesic equation is written as
\begin{equation}\label{Eq:GeodesicSubmanifold2}
\begin{cases}
\frac{D}{Dt}\dot{\varphi} + 2\frac{\dot{\lambda}}{\lambda}\dot{\varphi} = -\frac12\nabla^g p \circ \varphi \\
\ddot{\lambda}r - \lambda r g(\dot{\varphi},\dot{\varphi}) = -  \lambda r p \circ \varphi\,.
\end{cases}
\end{equation}
Viewing the automorphisms $(\varphi,\lambda)$ of the cone as diffeomorphisms of the cone, the geodesic equation is close to incompressible Euler equations, with the difference that the automorphisms do not preserve the Riemannian volume measure on $\mathcal{C}(M)$ but another density which has a singularity at the cone point. 

\begin{theorem}
On the group of diffeomorphisms of the cone, the geodesic equation can be written 
\begin{equation}\label{Eq:CompactFormGeodesic}
\frac{D}{Dt} (\dot{\varphi},\dot{\lambda r}) = - \nabla \Psi_p \circ (\varphi,\lambda r) \,,
\end{equation}
where $\Psi_p(x,r) \eqdef \frac12 r^2p(x)$. Moreover, the diffeomorphisms of $\mathcal{C}(M)$ $(\varphi,\lambda)$ preserve the measure $\tilde{\nu} \eqdef r^{-3}\ud r \ud \! \on{vol}$.
\\
In other words, a solution $ (\varphi,\lambda)$ of \eqref{Eq:CompactFormGeodesic} is a solution of the incompressible Euler equation for the density $r^{-3-d} \ud \! \on{vol}_{\mathcal{C}(M)}$ 
where $\ud \! \on{vol}_{\mathcal{C}(M)}$ is the volume form on the cone $\mathcal{C}(M)$ and $d$ is the dimension of $M$.
\end{theorem}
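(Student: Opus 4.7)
My plan is to prove the compact form \eqref{Eq:CompactFormGeodesic} by direct computation using the Levi-Civita connection on the cone given in Proposition \ref{Th:ConeConnection}, and to establish the measure-preservation property via a Jacobian calculation that uses the isotropy constraint \eqref{Eq:Fiber2}. The identification with an incompressible Euler equation on $\mathcal{C}(M)$ then follows by unit conversion between the two natural volume references.

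First, I would interpret the path $\Phi(t)(x,r) \eqdef (\varphi(t,x), \lambda(t,x)\, r)$ as a curve in $\Diff(\mathcal{C}(M))$ whose time derivative is $\dot{\Phi} = (\dot{\varphi}, \dot{\lambda}\, r)$, with the second entry being the radial component along $e = \partial_r$. Applying Proposition \ref{Th:ConeConnection}, namely $\nabla_{\hat{X}}\hat{Y} = \widehat{\nabla^g_X Y} - r\, g(X,Y)\, e$ together with $\nabla_e \hat{X} = \nabla_{\hat{X}} e = r^{-1}\hat{X}$, one obtains
\begin{equation*}
\frac{D}{Dt}(\dot{\varphi}, \dot{\lambda}\, r) = \left(\frac{D^g}{Dt}\dot{\varphi} + 2\frac{\dot{\lambda}}{\lambda}\dot{\varphi},\; \ddot{\lambda}\, r - \lambda\, r\, g(\dot{\varphi},\dot{\varphi})\right),
\end{equation*}
where the factor $2\dot{\lambda}/\lambda$ in the first entry is exactly what arises from the mixed cross-terms $\nabla_e \hat{X}$ in the cone connection. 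This reproduces the left-hand side of \eqref{Eq:GeodesicSubmanifold2}. Next I would compute $\nabla \Psi_p$: since $g_{\mathcal{C}(M)} = r^2 g + \ud r^2$ has inverse metric $r^{-2} g^{-1}$ in the tangential directions and $1$ in the radial direction, and $\Psi_p(x,r) = \tfrac12 r^2 p(x)$, one finds
\begin{equation*}
\nabla \Psi_p(x,r) = \left(\tfrac{1}{2} \nabla^g p(x),\; r\, p(x)\, e\right),
\end{equation*}
which, evaluated at $\Phi = (\varphi, \lambda r)$, matches the right-hand side of \eqref{Eq:GeodesicSubmanifold2}. Combining the two identifications yields \eqref{Eq:CompactFormGeodesic}.

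For the measure-preservation statement, I would write $\Phi$ in block form in local product coordinates,
\begin{equation*}
D\Phi(x,r) = \begin{pmatrix} D\varphi(x) & 0 \\ r\,\nabla^g \lambda(x) & \lambda(x) \end{pmatrix},
\end{equation*}
so that $\det D\Phi = \lambda \cdot \on{Jac}(\varphi)$ relative to the reference measure $\ud\!\on{vol}\,\ud r$. By the characterization of $\on{Aut}_{\on{vol}}(\mathcal{C}(M))$ in \eqref{Eq:Fiber2} one has $\lambda^2 = \on{Jac}(\varphi)$, hence $\det D\Phi = \lambda^3$. The change of variable $(x,r) \mapsto (\varphi(x), \lambda(x) r)$ then gives for every test function $\phi$:
\begin{equation*}
\int \phi(\Phi(x,r))\, r^{-3}\,\ud\!\on{vol}(x)\,\ud r = \int \phi(y,s)\, \lambda^{-2}\on{Jac}(\varphi)\, s^{-3}\,\ud\!\on{vol}(y)\,\ud s = \int \phi(y,s)\, s^{-3}\,\ud\!\on{vol}(y)\,\ud s,
\end{equation*}
showing that $\Phi_*\tilde{\nu} = \tilde{\nu}$. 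Finally, since $\ud\!\on{vol}_{\mathcal{C}(M)} = r^d\,\ud r\,\ud\!\on{vol}$ for $d = \dim M$, the density $\tilde{\nu} = r^{-3}\,\ud r\,\ud\!\on{vol}$ coincides with $r^{-3-d}\,\ud\!\on{vol}_{\mathcal{C}(M)}$, giving the stated incompressible Euler interpretation.

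I do not anticipate any serious obstacle, since every step reduces to applying formulas already available in the paper. The only point needing care is bookkeeping: keeping the two roles of the radial variable distinct (the coordinate $r$ on $\mathcal{C}(M)$ versus the conformal factor $\lambda(x)$ coming from the constraint), and recognizing that the coefficient $2\dot{\lambda}/\lambda$ in \eqref{Eq:GeodesicSubmanifold2} is not an ad hoc choice but precisely the contribution of $\nabla_e\hat{X}$ in the cone connection, which is what makes the two separate equations of \eqref{Eq:GeodesicSubmanifold2} collapse into a single Euler-type equation on $\mathcal{C}(M)$.
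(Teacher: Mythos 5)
Your proposal is correct and follows essentially the same route as the paper: identify $\nabla\Psi_p=(\tfrac12\nabla^g p,\,rp\,e)$ so that the two equations of \eqref{Eq:GeodesicSubmanifold2} collapse into \eqref{Eq:CompactFormGeodesic}, then use $\lambda=\sqrt{\on{Jac}(\varphi)}$ to check that the Jacobian factor $\lambda\,\on{Jac}(\varphi)=\lambda^3$ exactly compensates the homogeneity $r^{-3}\mapsto(\lambda r)^{-3}$ of the singular density. (The factor in your change-of-variables display should read $\lambda^{2}\on{Jac}(\varphi)^{-1}$ rather than its reciprocal, but both equal $1$ under the constraint, so this is inconsequential.)
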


\begin{proof}
The geodesic equations \eqref{Eq:GeodesicSubmanifold2} can be rewritten in the form \eqref{Eq:CompactFormGeodesic} since a direct computation gives $\nabla \Psi_p = (\frac12 \nabla^g p,  r p)$.
\par
The only remaining point is that $(\varphi,\lambda)$ preserves the measure $r^{-3}\ud \nu \ud r$ on $\mathcal{C}(M)$, if the relation $\lambda = \sqrt{\on{Jac}(\varphi)}$ holds. Indeed, the volume form $r^{\th} \ud \nu \ud r$ is preserved by $(\varphi, \lambda)$ if and only if the following equality is satisfied $ (\lambda r)^\th \lambda \on{Jac}(\varphi)  = r^\th$, equivalently $\lambda^{\th + 3} = 1$. It is the case if and only if $\th = -3$.
\end{proof}

In particular, this theorem underlines that $\on{Aut}_{\on{vol}}(\mathcal{C}(M)) = \on{Aut}(\mathcal{C}(M)) \, \cap \, \on{SDiff}_{\tilde{\nu}}(\mathcal{C}(M))$. In remark \ref{Rem:TotallyGeodesic}, we mentioned that $\on{Aut}(\mathcal{C}(M))$ is a totally geodesic subspace of $\on{Diff}(\mathcal{C}(M))$, which explains the fact that the geodesic equation on $\on{Aut}_{\on{vol}}(\mathcal{C}(M))$ is actually a geodesic equation on $\on{SDiff}_{\tilde{\nu}}(\mathcal{C}(M))$. We illustrate this situation in Figure \ref{Fig}. 
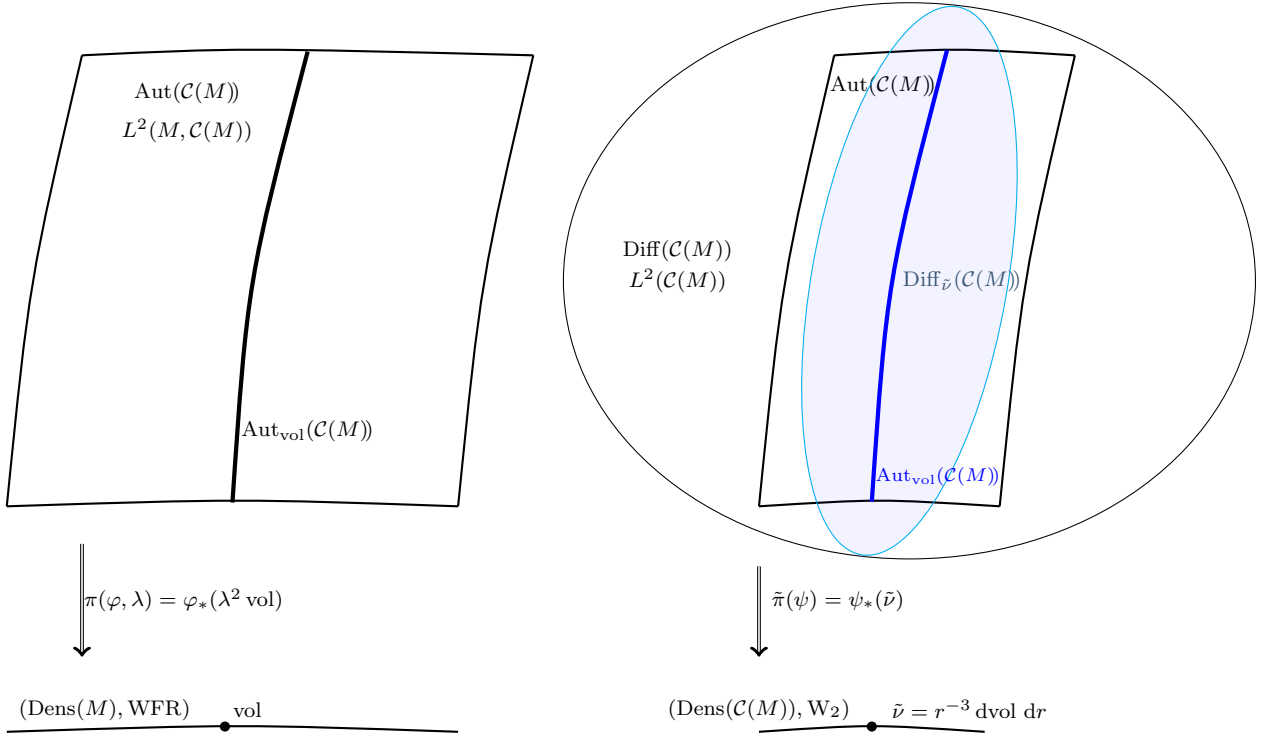
\begin{figure}
\centering
\begin{tikzpicture}
\draw [thick] (0,0) .. controls (0.3,3) .. (1,6); 
\draw [thick] (0,0) .. controls (3.1,0.1).. (6,0);
\draw [thick] (1,6) .. controls (3.1,6.1) .. (7,6);
\draw [thick] (6,0) .. controls (6.3,3) .. (7,6);
\node at (4,1){\footnotesize{$\on{Aut}_{\on{vol}}(\mathcal{C}(M)\!)$}};
\draw [ultra thick] (3,0.05) .. controls (3.2,3) .. (4,6.05);
\node at (1.3,-2.7) {\footnotesize{$(\Dens(M),\on{WFR})$}};
\node at (3.2,-2.7) {\footnotesize{$\on{vol}$}};
\node at (2.4,5.5) {\footnotesize{$\on{Aut}(\mathcal{C}(M)\!)$}};
\node at (2.4,5) {\footnotesize{$L^2(M,\mathcal{C}(M))$}};
\draw [thick] (0,-3) .. controls (3.1,-2.9).. (6,-3);
\draw[fill] (2.9,-2.93) circle [radius=0.06];
\draw[->,double] (1,-0.5) -- (1,-2); 
\node at (2.35,-1.25){\footnotesize{$\pi(\varphi,\lambda) = \varphi_*(\lambda^2\on{vol})$}};


\draw [thick] (10,0) .. controls (10.3,3) .. (11,6); 
\draw [thick] (10,0) .. controls (11.6,0.1).. (13.2,0);
\draw [thick] (11,6) .. controls (12.6,6.1) .. (14.2,6);
\draw [thick] (13.2,0) .. controls (13.5,3) .. (14.2,6);
\node at (11.65,5.6){\footnotesize{$\on{Aut}(\mathcal{C}(M)\!)$}};
\node at (8.92,3.4) {\footnotesize{$\Diff(\mathcal{C}(M))$}};
\node at (8.92,3) {\footnotesize{$L^2(\mathcal{C}(M))$}};
\draw [ultra thick,blue] (11.5,0.06) .. controls (11.7,3) .. (12.5,6.07);
\node at (10,-2.7) {\footnotesize{$(\Dens(\mathcal{C}(M)),\on{W}_2)$}};
\node at (12.8,-2.7) {\footnotesize{$\tilde{\nu} = r^{-3} \ud \! \on{vol}  \ud r$}};
\draw [thick] (10,-3) .. controls (11.5,-2.9).. (13,-3);
\draw[fill] (11.5,-2.93) circle [radius=0.06];
\draw (12,3) circle [x radius = 4.6,y radius = 3.7];  
\draw[rotate around={-10:(12,3)},draw = cyan, fill=blue, opacity=0.05] (12,3) circle [x radius = 1.3,y radius = 3.7];  
\draw[rotate around={-10:(12,3)},draw = cyan] (12,3) circle [x radius = 1.3,y radius = 3.7];
\node[color={rgb:red,135;green,206;blue,280}] at (12.7,3) {\footnotesize{$\on{Diff}_{\tilde{\nu}}(\mathcal{C}(M)\!)$}};
\draw[->,double] (10,-0.8) -- (10,-2); 
\node[blue] at (12.38,0.4){\scriptsize{$\on{Aut}_{\on{vol}}(\mathcal{C}(M)\!)$}};
\node at (11.05,-1.25){\footnotesize{$\tilde{\pi}(\psi) = \psi_*(\tilde{\nu})$}};
\end{tikzpicture}

\caption{On the left, the picture represents the Riemannian submersion between $\on{Aut}(\mathcal{C}(M))$ and the space of positive densities on $M$ and the fiber above the volume form is $\on{Aut}_{\on{vol}}(\mathcal{C}(M))$.
On the right, the picture represents the automorphism group $\on{Aut}(\mathcal{C}(M))$ isometrically embedded in $\Diff(\mathcal{C}(M))$ and the intersection of
$\on{Diff}_{\tilde{\nu}}(\mathcal{C}(M))$ and $\on{Aut}(\mathcal{C}(M))$ is equal to $\on{Aut}_{\on{vol}}(\mathcal{C}(M))$.}
\label{Fig}
\end{figure}

%
%
%
%
%

The same result holds on more general Riemannian manifolds.
We propose a straightforward generalization of Brenier's proof \cite{BRENIER200355} in the case of Euler equation to a Riemannian setting. Note that, to our knowledge, no previous result was available on minimizing $H^{\Div}$ geodesics. In the worst case of our theorem, we require only an $L^\infty$ bound on the Jacobian and on the diffeomorphism.
\begin{theorem}\label{ThMinimizingGeodesicsHdiv}
Let $(\varphi(t),\lambda(t))$ be a smooth solution to the geodesic equations \eqref{Eq:CompactFormGeodesic} on the time interval $[t_0,t_1]$.
If $(t_1-t_0)^2|\langle w, \nabla^2 \Psi_{p}(x,r) w \rangle| < \pi^2 \| w \|^2$ holds for all $t\in [t_0,t_1]$ and $(x,r) \in \mathcal{C}(M)$ and $w \in T_{(x,r)} \mathcal{C}(M)$, then for every smooth curve $(\varphi_0(t),\lambda_0(t)) \in \on{Aut}_{\on{vol}}(\mathcal{C}(M))$ satisfying $  (\varphi_0(t_i),\lambda_0(t_i)) = (\varphi(t_i),\lambda(t_i))$ for $i = 0,1$ and the condition $(*)$, one has
\begin{equation}
\int_{t_0}^{t_1} \| (\dot{\varphi},\dot{ \lambda}) \|^2 \, \ud t \leq \int_{t_0}^{t_1} \| (\dot{\varphi}_0,\dot{ \lambda }_0) \|^2 \, \ud t \,,
\end{equation}
with equality if and only if the two paths coincide on $[t_0,t_1]$.
\par 
Define $\delta_0 \eqdef \min \{r(x,t) \,: \, \text{injectivity radius at }  (\varphi(t,x),\lambda(t,x)) \}$,
then the condition $(*)$ is: 
\begin{enumerate}
\item If the sectional curvature of $\mathcal{C}(M)$ can assume both signs or if $\on{diam}(M) \geq \pi$, there exists $\delta$ satisfying $0<\delta < \delta_0$ such that the curve $(\varphi_0(t),\lambda_0(t))$ has to belong to a  $\delta$-neighborhood of $(\varphi(t),\lambda(t))$, namely
\begin{equation}\nonumber
d_{\mathcal{C}(M)}\left((\varphi_0(t,x),\lambda_0(t,x)),(\varphi(t,x),\lambda(t,x))\right)) \leq \delta
\end{equation}
for all $(x,t) \in  M \times [t_0,t_1]  $ where $d_{\mathcal{C}(M)}$ is the distance on the cone.
\item If $\mathcal{C}(M)$ has non positive sectional curvature, then, for every $\delta < \delta_0$, there exists a short enough time interval on which the geodesic will be length minimizing. 
\item If $M = S_d(1)$, the result is valid for every path $(\dot{\varphi}_0,\dot{\lambda}_0)$.
\end{enumerate}
\end{theorem}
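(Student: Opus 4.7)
The plan is to transpose Brenier's second-variation argument from the one-dimensional case to the general curved setting. Via the isometric embedding $\on{Aut}_{\on{vol}}(\mathcal{C}(M))\hookrightarrow L^2(M,\mathcal{C}(M))$ from Theorem~\ref{Th:IsometricInjection}, we view both paths $g_t=(\varphi(t),\lambda(t))$ and $h_t=(\varphi_0(t),\lambda_0(t))$ as $L^2$ families of maps $M\to\mathcal{C}(M)$ and compare their energies by interpolating them fibrewise through geodesics of $\mathcal{C}(M)$. The key substitution relative to the flat Camassa-Holm case is that Euclidean segments in $\R^2$ are replaced by true Riemannian geodesics whose $t$-variations are Jacobi fields, so the whole apparatus is available only on a region where these geodesics are unique and smooth; this is precisely what the three sub-cases of condition $(*)$ encode.

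First we fix the geodesic homotopy $c(t,s,x)$ in $\mathcal{C}(M)$ with $c(t,0,x)=g_t(x)$ and $c(t,1,x)=h_t(x)$, introducing $W\eqdef\partial_s c$ (parallel in $s$, hence $\|W\|$ is $s$-independent) and $J\eqdef\partial_t c$ (a Jacobi field in $s$ satisfying $\frac{D^2}{Ds^2}J=-R(J,W)W$). Since $g_{t_i}=h_{t_i}$ for $i=0,1$, the $s$-geodesic degenerates at the time endpoints, which yields the crucial boundary condition $W(t_i,\cdot,x)\equiv 0$. Setting $f(s)\eqdef\tfrac12\int_{t_0}^{t_1}\!\int_M\|J\|^2\,\ud\!\on{vol}\,\ud t$, Taylor's formula reads $f(1)-f(0)=f'(0)+\int_0^1(1-s)f''(s)\,\ud s$, and our goal reduces to bounding the right-hand side below by $0$. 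Using the commutation $\frac{D}{Ds}J=\frac{D}{Dt}W$, integrating by parts in $t$ (legal because $W$ vanishes at $t_0,t_1$), and applying the Lagrangian geodesic equation $\frac{D}{Dt}\dot g_t=-\nabla\Psi_p\circ g_t$ valid on $\on{Aut}_{\on{vol}}$, we obtain $f'(0)=\int\langle\nabla\Psi_p(g_t),W(t,0,x)\rangle\,\ud\!\on{vol}\,\ud t$.

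Next, a second-order Taylor expansion of $\Psi_p\circ c(t,\cdot,x)$ in $s$, together with the identity $\int_M\Psi_p\circ g_t\,\ud\!\on{vol}=0=\int_M\Psi_p\circ h_t\,\ud\!\on{vol}$ valid on $\on{Aut}_{\on{vol}}$ (this uses $\Psi_p(x,r)=\tfrac12 r^2 p(x)$ and the 1D argument verbatim, since $p$ has zero mean and $g_t,h_t$ push the reference volume to itself), will give $-f'(0)\leq\tfrac{C}{2}\int\!\int\!\int_0^1\|W\|^2\,\ud s\,\ud\!\on{vol}\,\ud t$ where $C$ is the uniform bound on $\nabla^2\Psi_p$ provided by the hypothesis. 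Since $\|W\|$ is $s$-independent and vanishes at $t_0,t_1$, the one-dimensional Poincaré inequality applied to $t\mapsto\|W(t,s,x)\|$, combined with $\bigl|\partial_t\|W\|\bigr|\leq\|\tfrac{D}{Dt}W\|=\|\tfrac{D}{Ds}J\|$, then upgrades this to
\begin{equation*}
-f'(0)\leq\frac{C(t_1-t_0)^2}{2\pi^2}\int_{t_0}^{t_1}\!\int_M\!\int_0^1\Bigl\|\tfrac{D}{Ds}J\Bigr\|^2\,\ud s\,\ud\!\on{vol}\,\ud t.
\end{equation*}

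The final ingredient is the identity $f''(s)=\int\bigl(\|\tfrac{D}{Ds}J\|^2-\langle R(J,W)W,J\rangle\bigr)\,\ud\!\on{vol}\,\ud t$ extracted from the Jacobi equation, and the inequality $f(1)-f(0)\geq 0$ will follow as soon as we can bound $f''(s)$ below by (a positive constant times) $\|\tfrac{D}{Ds}J\|^2$, compatibly with the Poincaré estimate under the strict hypothesis $(t_1-t_0)^2\|\nabla^2\Psi_p\|<\pi^2$. The three sub-cases of $(*)$ diverge here: for $M=S_d(1)$ the cone is flat Euclidean space, so $R\equiv 0$ and the 1D proof applies verbatim; for non-positively curved $\mathcal{C}(M)$ the curvature term $-\langle R(J,W)W,J\rangle$ is non-negative and the estimate is essentially free; in the mixed-curvature case, the $\delta$-neighbourhood condition allows a Rauch-type comparison inside the $\delta$-tube around the reference curve to absorb $|\langle R(J,W)W,J\rangle|$ into an arbitrarily small fraction of $\|\tfrac{D}{Ds}J\|^2$. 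The equality case is then extracted from $\tfrac{D}{Ds}J\equiv 0$ together with $W|_{t_i}=0$, which forces $c(t,\cdot,x)$ to be constant in $s$, i.e.\ $g=h$. The hard part is unquestionably this curvature control in $f''$: in the Camassa-Holm case the cone is literally $\R^2$ and the step is invisible, whereas here the trichotomy in $(*)$ is tailored precisely so that the Jacobi-field second variation dominates the pressure perturbation.
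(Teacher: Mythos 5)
Your proposal follows the paper's own strategy essentially step for step: the geodesic homotopy $c(t,s,x)$ in $\mathcal{C}(M)$, the Jacobi field $J=\partial_t c$, the zero-mean pressure identity, the second-order Taylor expansion of $\Psi_p\circ c$ in $s$, the Poincar\'e inequality in $t$ applied to $\|\partial_s c\|$, and the reduction to bounding $\int_0^1(1-s)f''(s)\,\ud s$ from below with $f''(s)=\int\bigl(\|\dot J\|^2-\langle R(\partial_s c,J)J,\partial_s c\rangle\bigr)$. Up to and including the estimate $-f'(0)\le \frac{C(t_1-t_0)^2}{2\pi^2}\int\!\!\int\!\!\int\|\dot J\|^2$, your outline matches the appendix proof, and the flat case $M=S_d(1)$ and the non-positively curved case are handled the same way.

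There is, however, a genuine gap at exactly the step you flag as ``the hard part''. You claim the $\delta$-neighbourhood hypothesis lets you absorb $|\langle R(J,W)W,J\rangle|$ into an arbitrarily small fraction of $\|\tfrac{D}{Ds}J\|^2$. Taken literally this fails: the curvature term is bounded by $K_{\sup}\|W\|^2\|J\|^2\le K_{\sup}\delta^2\|J\|^2$, and $\|J\|$ contains the base contribution $\|J(t,0,x)\|=\|\dot g_t(x)\|$, the speed of the reference geodesic, which is in no way controlled by $\|\dot J\|$. If $h$ is very close to $g$ then $\dot J$ is tiny while $J$ stays of order $\|\dot g\|$, so no pointwise inequality of the form $|\langle R(J,W)W,J\rangle|\le \varepsilon\|\dot J\|^2$ can hold uniformly; shrinking $\delta$ does not fix this because $\varepsilon$ would then depend on the competitor $h$. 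The paper gets around this with a two-case argument on the size of $\int\!\!\int\!\!\int\|\dot J\|^2$: when this quantity is at least $1$, the bound $\|J\|^2\le 2\|J(0)\|^2+2\int_0^1\|\dot J\|^2\ud s$ together with a choice of $\delta$ small relative to the fixed quantity $\int\|\dot g\|^2$ makes the curvature integral at most $1\le\int\|\dot J\|^2$; when it is at most $1$, one uses Cauchy--Schwarz plus equivalence of norms on the finite-dimensional space of Jacobi fields (uniform in $(t,x)$ by compactness) to bound the curvature integral by a constant times $\bigl(\int\|\dot J\|^2\bigr)$ with a prefactor that can be made $<1$. Some substitute for this case analysis (a genuinely quantitative Rauch-type estimate would have to produce the same kind of bound) is needed to close your argument in the mixed-curvature case.
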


\begin{remark}\label{RemConditionOnHessian}
Importantly, the condition on the Hessian is not empty, i.e. it is fulfilled in our case of interest: Indeed, when $p$ is a $C^2$ function on $M$, the Hessian of $\Psi_p(x,r) = \frac12r^2p(x)$ is, in the orthonormal basis $\partial_r,\frac{1}{r}e_1,\ldots,\frac{1}{r}e_d$ where $e_1,\ldots,e_d$ is an orthornormal basis of $T_xM$
\begin{equation}
\nabla^2 \Psi_{p}(x,r) = \begin{pmatrix} \frac12\nabla^2 p(x)  & \nabla p(x) \\ \nabla p^T(x) & p(x) \end{pmatrix}\,,
\end{equation}
where $\nabla p$ is the gradient of $p$ in the orthornormal basis $e_1,\ldots,e_d$.
Since $p$ is smooth and $M$ is compact, the Hessian of $p$ is bounded uniformly on $\mathcal{C}(M)$.
\end{remark}

The proof is postponed in Appendix. The generalization of Brenier's proof that we propose is not completely satisfactory in positive curvature or, in the case of negative curvature, because of the injectivity radius bound. In the former case, the constructed interpolating paths have to pass through the cone point and therefore these paths $c(t,s,x)$ are not smooth any longer w.r.t. $s$ and thus Jacobi fields are not smooth a priori. These two limitations could probably be overcome using a different strategy than a geodesic homotopy between the two diffeomorphisms. We actually conjecture that the result holds true without the boundedness assumption.

\section{Future directions}

In this article, we have presented the geometric link between the Camassa-Holm equation and the new $L^2$ Wasserstein optimal transport metric between positive Radon measures. 
We presented an isometric embedding of the group of diffeomorphism group endowed with the right-invariant $H^{\Div}$ metric in the space $L^2(M,\mathcal{C}(M))$. This isometric embedding enables to rewrite the Camassa-Holm equation, via a Madelung transform, as an incompressible Euler equation on the cone. In other words, the Camassa-Holm equation is a geodesic flow on $\on{Aut}_{\on{vol}}(\mathcal{C}(M))$ for the $L^2$ metric.
As an application, this has also led to a result on the minimizing property of geodesics. 
The point of view developed in this paper can be taken to address the variational problem of shortest path for the $H^{\Div}$ metric in the sense of Brenier \cite{Brenier1999,Brenier2013}, which appears to be a non-trivial problem.
Following Brenier, we will investigate elsewhere the uniqueness of the pressure as in \cite{Brenier1993}. This isometric embedding and the polar factorization theorem opens the way to design new numerical simulations of variational  solutions of the Camassa-Holm equation, in the direction of \cite{Merigot2016, Merigot2015}.
\par
Following the point of view developed in this article, we plan to rewrite other fluid dynamic equations as  geodesic equations on a submanifold of a space of maps endowed with an $L^2$ norm. The result may have, as shown for the Camassa-Holm equation, interesting analytical consequences.

\appendix

\section{Proof of Theorem \ref{ThMinimizingGeodesicsHdiv}}

\begin{proof}
To alleviate notations, we denote $g_t = (\varphi(t),\lambda(t))$ and $h_t = (\varphi_0(t),\lambda_0(t))$. 
Since $p$ can be choose with zero mean, $\Psi_p(x,r) = \frac12 r^2 p (x)$ and $g_t = (\varphi(t),\sqrt{\on{Jac}(\varphi(t))})$, by direct integration, for every $t \in [t_0,t_1]$, 
\begin{equation}
\int_{M} \Psi_p(g_t(x)) \ud x = 0\,.
\end{equation}
The same equality holds for $h_t$.
\par
Let $s \in [0,1] \mapsto c(t,s,x)$ be a two parameters ($t\in [t_0,t_1]$ and $x \in M$) family of geodesics on $\mathcal{C}(M)$ such that $c(t,0,x) = g_t(x)$ and $c(t,1,x) = h_t(x)$ for every $t \in [t_0,t_1]$ and $x \in M$.
This family of geodesics is uniquely 
defined if one considers balls which do not intersect the cut locus. Uniformity of the radius of the balls can be obtained since $[t_0,t_1] \times M$ is compact, which defines $\delta_0$. Consequently, the family of curves $c(t,s,x)$ is a smooth family of geodesics, at least as smooth as $g_t(x)$ and $h_t(x)$ are with respect to the parameters $t,x$. 
Since $\partial_t c(t,s,x)$ is a variation of geodesics, it is a Jacobi field as a function of $s$. Thus, we will use the notation 
$J(t,s,x) = \partial_t c(t,s,x)$. Consequently, we have 
\begin{equation}
J(t,0,x) = \partial_t g_t(x) \text{ and } J(t,1,x) = \partial_t h_t(x)\,.
\end{equation}
Now, the result we want to prove can be reformulated as,
\begin{equation}
\int_{t_0}^{t_1} \int_M \| J(t,0,x) \|^2 \ud t \ud x \leq \int_{t_0}^{t_1} \int_M \| J(t,1,x) \|^2 \ud t \ud x
\end{equation}
with equality if and only if for almost every $x$, it holds $g_t(x)  = h_t(x)$ for all $t \in [t_1,t_2]$.
We now use a second-order Taylor expansion of $\Psi_p(c(t,s,x))$ with respect to $s$ at $s = 0$. Denoting by $C \eqdef \sup_{t \in [t_0,t_1]}\sup_{x \in M} \|\nabla^2 \Psi_{p_t}(x)\| $, we have, 
\begin{align*}
& \Psi_p(h_t(x)) -  \Psi_p(g_t(x)) - \langle \nabla \Psi_p(c(t,0,x), \partial_s c(t,0,x) \rangle  \leq \frac{C}{2} \int_{0}^{1}\|\partial_s c (t,s,x)\|^2 \ud s \,.
\end{align*}
Now, one has that $\partial_s c(t,s,x)$ vanishes at $t =t_0$ and $t = t_1$. We can therefore apply Poincaré inequality to $\|\partial_s  c(t,s,x) \|$ to obtain 
\begin{equation}
\int_{t_0}^{t_1} \|\partial_s c(t,s,x) \|^2 \ud t \leq \frac{C(t_1 - t_0)^2}{2\pi^2} \int_{t_0}^{t_1} | \partial_t \|\partial_s  c(t,s,x) \| |^2 \ud t\,.
\end{equation}
Since $ \partial_t \|\partial_s  \| = \frac{1}{\|\partial_s c \|} \langle \nabla_t \partial_s c, \partial_s c\rangle$, we have the inequality $| \partial_t \|\partial_s  \| |  \leq \| \nabla_t \partial_s c \|$ and we get, exchanging derivatives,
\begin{equation}\label{Eq:PoincareInequality}
\int_{t_0}^{t_1} \|\partial_s c(t,s,x) \|^2 \ud t \leq \frac{C(t_1 - t_0)^2}{2\pi^2} \int_{t_0}^{t_1} \| \dot{J} (t,s,x) \|^2 \ud t\,,(t,0,x)
\end{equation}
where $\dot{J}$ is the covariant derivative of $J$ with respect to $s$. We thus have
\begin{align*}
& \int_{t_0}^{t_1} \Psi_p(c(t,1,x)) -  \Psi_p(c(t,0,x)) - \langle \nabla \Psi_p(c(t,0,x)), \partial_s c(t,0,x) \rangle  \leq \frac{C(t_1 - t_0)^2}{2\pi^2} \int_{t_0}^{t_1}  \int_{0}^{1}  \| \dot{J}(t,s,x) \|^2  \ud s \ud t  \,.
\end{align*}
However, $g_t(x) = c(t,0,x)$ is a solution of $\nabla_t \partial_t c(t,0,x) = - \nabla \Psi_p(t,0,x)$, therefore, an integration by part w.r.t. $t$ leads to
\begin{align*}
& \int_{t_0}^{t_1} \Psi_p(c(t,1,x)) -  \Psi_p(c(t,0,x)) - \langle \partial_t c(t,0,x), \nabla_t \partial_s c(t,0,x) \rangle \ud t \leq \frac{C(t_1 - t_0)^2}{2\pi^2} \int_{t_0}^{t_1}  \int_{0}^{1}  \| \dot{J}(t,s,x) \|^2 \ud s   \ud t  \,.
\end{align*}
Last, integrating over $M$ and exchanging once again covariant derivatives gives
\begin{align*}
& \int_{t_0}^{t_1} \int_M - \langle J(t,0,x), \dot{J}(t,0,x)  \rangle \ud x \ud t \leq \frac{C(t_1 - t_0)^2}{2\pi^2} \int_{t_0}^{t_1} \int_M \int_0^1 \| \dot{J}(t,s,x) \|^2 \ud s \ud x\ud t \,.
\end{align*}
Writing $f(s) =\frac 12 \int_{t_0}^{t_1} \int_M \| J(t,s,x) \|^2 \ud t $, we want to prove $f(1) \geq f(0)$ and we have $$-f'(0) \leq \frac{C(t_1 - t_0)^2}{2\pi^2} \int_{t_0}^{t_1} \int_M \int_0^1 \| \dot{J}(t,s,x) \|^2 \ud s \ud x\ud t\,.$$
Therefore, the result is proven if we can show
\begin{equation}
f(1) - f(0) - f'(0) \geq \varepsilon \int_{t_0}^{t_1} \int_M \int_0^1 \| \dot{J}(t,s,x) \|^2 \ud s \ud x\ud t\,.
\end{equation}
The left hand side can be reformulated using $f(1) - f(0) - f'(0) = \int_0^1 (1-s)f''(s) \ud s$ as
\begin{equation}\label{Eq:IneqToProve}
\int_{t_0}^{t_1} \int_M \int_0^1 (1-s) ( \| \dot{J} \|^2 - \langle R(\partial_s c,J)J,\partial_sc\rangle ) \ud s \ud x\ud t \geq \varepsilon \int_{t_0}^{t_1} \int_M \int_0^1 \| \dot{J} \|^2 \ud s \ud x\ud t\,,
\end{equation} 
with $\varepsilon = \frac{C(t_1 - t_0)^2}{2\pi^2}$.
\par
We now need to distinguish between two cases, the first one being when $ \int_{t_0}^{t_1} \int_M \int_0^1  \| \dot{J} \|^2  \ud s \ud x \ud t \geq 1$.
In this case, we use the inequality 
\begin{equation}\label{Eq:SimpleIneq}\| J(t,1,x) \|^2 \leq 2\| J(t,0,x)\|^2 + 2\int_0^1 \|\dot{J}(t,s,x) \|^2 \ud s\,,
\end{equation} 
in order to get
\begin{equation}
- \int_{t_0}^{t_1} \int_M \int_0^1 (1-s) \langle R(\partial_s c,J)J,\partial_sc\rangle \ud s \ud x\ud t  \leq  \delta^2 \int_{t_0}^{t_1} \int_M\int_0^1 K_{\text{sup}} (2\| J(0)\|^2 + 2 \|\dot{J}(s) \|^2) \ud s \ud x\ud t \,,
\end{equation}
where $\delta = \sup_{(x,t) \in M\times[t_0,t_1]} \| \partial_s c(t,0,x) \|$ and $K_{\text{sup}}$ is a bound on $\max(K(y),0)$ with $K(y)$ is the maximum of the sectional curvatures at $y \in \mathcal{C}(M)$ for $y$ in a bounded neighborhood of $\underset{t \in [t_0,t_1]}{\bigcup} g_t(M)$ which is compact. Then, there exists $\delta$ sufficiently small such that for every $(x,t) \in M \times [t_0,t_1]$,
\begin{equation}
\int_{t_0}^{t_1} \int_M \int_0^1 (1-s) \langle R(\partial_s c,J)J,\partial_sc \rangle \ud s\ud x \ud t  \leq  1 \leq \int_{t_0}^{t_1} \int_M \int_0^1  \| \dot{J} \|^2  \ud s \ud x \ud t\,.
\end{equation}
\par
Now we study the second case, that is when $\int_{t_0}^{t_1} \int_M \int_0^1\| \dot{J} \|^2  \ud s \ud x \ud t \leq 1$. 
Applying once again inequality \eqref{Eq:PoincareInequality}, we obtain, using the Cauchy-Schwarz inequality,
\begin{multline}
\int_{t_0}^{t_1}\int_M \int_0^1 (1-s) \langle R(\partial_s c,J)J,\partial_sc \rangle \ud s \ud x \ud t \leq  \varepsilon K_{\text{sup}} \int_{t_0}^{t_1}\int_M \int_0^1  \|\dot{J}\|^2  \|J\|^2  \ud s \ud x \ud t \\ \leq  \varepsilon K_{\text{sup}} \left( \int_{t_0}^{t_1}\int_M \int_0^1  \|\dot{J}\|^4 \ud s \ud x \ud t \right)^{1/2}  \left(\int_{t_0}^{t_1}\int_M \int_0^1  \| J\|^4 \ud s \ud x \ud t\right)^{1/2} \,.
\end{multline}
We now remark that for each $t,x$, the space of Jacobi fields is finite dimensional and consequently, norms are equivalent so that there exists a positive constant $m$ that depends on $t,x$ such that 
\begin{equation}
\left(\int_0^1  \|\dot{J}\|^4 \ud s  \right)^{1/2} \leq m \int_0^1  \|\dot{J}\|^2 \ud s 
\end{equation} 
and 
\begin{equation}
\left(\int_0^1  \|J\|^4 \ud s  \right)^{1/2} \leq m \int_0^1  \|J\|^2 \ud s \,.
\end{equation}
By compactness of $M\times [t_0,t_1]$, the constant $m$ can be chosen independently of $t,x$ and therefore, there exists a constant $m'$ such that 
\begin{multline}
\int_{t_0}^{t_1}\int_M \int_0^1 (1-s) \langle R(\partial_s c,J)J,\partial_sc \rangle \ud s \ud x \ud t \leq  \\ \varepsilon K_{\text{sup}} m' \left( \int_{t_0}^{t_1} \int_M \int_0^1  \|\dot{J}\|^2 \ud s \ud x \ud t \right) \left(\int_{t_0}^{t_1}\int_M \int_0^1  \| J\|^2 \ud s \ud x \ud t\right) \,.
\end{multline}
 Then, inequality \eqref{Eq:SimpleIneq} leads to 
 \begin{multline}
\int_{t_0}^{t_1}\int_M \int_0^1 (1-s) \langle R(\partial_s c,J)J,\partial_sc \rangle \ud s \ud x \ud t  \leq  \varepsilon K_{\text{sup}} C m'\left( \int_{t_0}^{t_1} \int_M \int_0^1  \|\dot{J}\|^2 \ud s \ud x \ud t \right)  \,,
\end{multline}
with $M = \left(\int_{t_0}^{t_1}\int_M 2\| J(0)\|^2 + 2\int_0^1 \|\dot{J}(s) \|^2 \ud s \ud x \ud t\right)$.

\par 
Let us recall that our goal is to prove the existence of $\varepsilon>0$ such that
\begin{equation}\label{Eq:IneqToProve00}
\int_{t_0}^{t_1} \int_M \int_0^1 (1-s) \| \dot{J} \|^2  \ud s \ud x\ud t \geq \varepsilon \int_{t_0}^{t_1} \int_M \int_0^1 \| \dot{J} \|^2 + (1-s)\langle R(\partial_s c,J)J,\partial_sc\rangle \ud s \ud x\ud t\,,
\end{equation} 
which, in the first case, reads
\begin{equation}\label{Eq:IneqToProve01}
\int_{t_0}^{t_1} \int_M \int_0^1 (1-s) \| \dot{J} \|^2  \ud s \ud x\ud t \geq 2\varepsilon  \int_{t_0}^{t_1} \int_M \int_0^1 \| \dot{J} \|^2  \ud s \ud x\ud t\,,
\end{equation} 
and in the second case
\begin{equation}\label{Eq:IneqToProve2}
\int_{t_0}^{t_1} \int_M \int_0^1 (1-s) \| \dot{J} \|^2  \ud s \ud x\ud t \geq \varepsilon(1+ K_{\text{sup}} C m') \int_{t_0}^{t_1} \int_M \int_0^1 \| \dot{J} \|^2 \ud s \ud x\ud t\,.
\end{equation} 
The existence of $\varepsilon$ follows from the fact that the space of Jacobi fields is finite dimensional and the fact $M\times [t_0,t_1]$ is compact. It thus proves the result in the general case.
\par 
When the cone $\mathcal{C}(M)$ has non-positive sectional curvature, $K_{\text{sup}} = 0$ therefore, we only have to prove the existence of $\varepsilon$ such that 
\begin{equation}\label{Eq:IneqNegCurved}
\int_{t_0}^{t_1} \int_M \int_0^1 (1-s) \| \dot{J} \|^2  \ud s \ud x\ud t \geq \varepsilon  \int_{t_0}^{t_1} \int_M \int_0^1 \| \dot{J} \|^2  \ud s \ud x\ud t\,,
\end{equation} 
which does not require an a priori bound on the neighborhood. 
\par 
When $M = S_d(1)$, $\mathcal{C}(M)$ is flat and $\delta_0 = \infty$ and Jacobi fields are constant and the constant $\varepsilon$ does not depend on the neighborhood and is equal to $1/2$ as in Brenier's proof.
\end{proof}

\section*{Acknowledgements}
We would like to thank Yann Brenier and Klas Modin for stimulating discussions and a reviewer for his valuable comments which improved significantly this article.

\bibliographystyle{plain}
\bibliography{articles,SecOrdLandBig,sum_of_kernels,references}   

\end{document}